\let\csname 
\let\csname endequation*\endcsname=\relax
\DeclareMathOperator*{\argmin}{argmin}
\newtheorem{theorem}{Theorem}[section]
\newtheorem{lemma}[theorem]{Lemma}
\newtheorem{proposition}[theorem]{Proposition}
\newtheorem{remark}[theorem]{Remark}
\newtheorem{corollary}[theorem]{Corollary}
\begin{document}
\title[Inverse Problems]{Parameter Choices for Sparse Regularization with the $\ell_1$ Norm}

\author{Qianru Liu$^{1}$, Rui Wang$^{1}$, Yuesheng Xu$^{2}$ \footnote{Author to whom any correspondence should be addressed.} and Mingsong Yan$^{2}$}

\address
{$^{1}$ School of Mathematics, Jilin University, Changchun 130012, People’s Republic of China\\
$^{2}$ Department of Mathematics and Statistics, Old Dominion University, Norfolk, VA 23529, United States of America}
\eads{\mailto{liuqr19@mails.jlu.edu.cn}, \mailto{rwang11@jlu.edu.cn}, \mailto{y1xu@odu.edu} and \mailto{myan007@odu.edu}}

\vspace{10pt}
\begin{indented}
\item Dedicated to Professor Charles A. Micchelli on the occasion of his 80th birthday
with friendship and esteem
\end{indented}

\vspace{10pt}
\begin{abstract}
We consider a regularization problem whose objective function consists of a convex fidelity term and a regularization term determined by the $\ell_1$ norm composed with a linear transform. Empirical results show that the regularization with the $\ell_1$ norm can promote sparsity of a regularized solution. It is the goal of this paper to understand theoretically the effect of the regularization parameter on the sparsity of the regularized solutions. We establish a characterization of the sparsity under the transform matrix of the solution. When the fidelity term has a special structure and the transform matrix coincides with a identity matrix, the resulting characterization can be taken as a regularization parameter choice strategy with which the regularization problem has a solution having a sparsity of a certain level. We study choices of the regularization parameter so that the regularization term alleviates the ill-posedness and promote sparsity of the resulting regularized solution. Numerical experiments demonstrate that choices of the regularization parameters can balance the sparsity of the solutions of the regularization problem and its approximation to the minimizer of the fidelity function.
\end{abstract}

\section{Introduction}
Many practical problems may be modeled as learning a function from a finite number of observed data points. Learning a function from a finite number of observed data is an ill-posed problem. Such a problem cannot be solved directly as its solution is strongly sensitive to input data which are inevitably corrupted with noise. The ill-posedness was treated by the classical Tikhonov regularization which adds a regularization term to a data fidelity term constructed from the original ill-posed problem such that the resulting optimization problem is much less sensitive to disturbances. The added regularization term composes of a Hilbert space norm of the solution and a positive regularization parameter $\lambda$ which balances the noise suppression and the approximation error of the regularized solution. An estimate for the classical Tikhonov regularization expresses the regularization error in terms of a sum of the two terms: the approximation error proportional to $\lambda$  plus the error (caused by noise) proportional to the reciprocal of $\lambda$. The parameter $\lambda$ is then chosen to minimize the regularization error. For choices of the optimal regularization parameter, the readers are referred to  \cite{bauer2011comparingparameter, pereverzev2005adaptive, tikhonov1977solutions}.

Motivated by the big data nature of recent practical applications, sparse regularization in Banach spaces has attracted much attention in various fields, since a sparse representation for a learned function is essential to ease the computational burden for operations of the function as the amount of data increases. As a popular approach to achieve this, regularization in a Banach space with a sparsity promoting norm, such as the $\ell_1$ norm, is widely used in statistics, machine learning, signal processing, image processing and medical imaging. In statistics, the Lasso and its extensions \cite{ali2019generalized,tibshirani1996regression,tibshirani2005sparsity,tibshirani2011solution} apply an $\ell_1$ penalty to linear regression. The Lasso is also known in signal processing as basis pursuit \cite{chen2001atomic} which aims at decomposing a signal into an optimal superposition of dictionary elements in the sense that the resulting representation has the smallest $\ell_1$ norm of coefficients among all such decompositions. Image restoration using TV norms for regularization \cite{li2015multi,micchelli2011proximity,rudin1992nonlinear} leads to searching an optimization solution in the Euclidean space with the $\ell_1$ norm. Sparse learning models with the $\ell_1$ norm, such as $\ell_1$ SVM classification \cite{Li-Song-Xu2019, scholkopf2002learning, smola2004tutorial} and $\ell_1$ SVM regression \cite{bi2003dimensionality, Li-Song-Xu2018, smola2004tutorial}, have received increasing attention in machine learning. Motivated by the need of sparse learning algorithms, the notion of reproducing kernel Banach spaces (RKBSs) was introduced in \cite{zhang2009reproducing} and further developed in \cite{li2018sparse, lin2021multi, song2011reproducing, song2013reproducing, xu2019generalized}.  RKBSs with the $\ell_1$ norm \cite{lin2021multi, song2011reproducing, song2013reproducing} have been proven successful in promoting sparsity in representations for learned functions.

There were two crucial issues related to the choice of the regularization parameter in a regularization problem in a Banach space. The first one involves the error analysis to which considerable amount of work (for example,  \cite{grasmair2008sparse,grasmair2011necessary,lorenz2008convergence, schuster2012regularization}) has been devoted. In particular, for the regularization problem with a special fidelity term and the $\ell_p$ norm regularizer, a convergence rate of the regularized solutions has been derived in \cite{grasmair2008sparse,grasmair2011necessary,lorenz2008convergence} according to a noise level and a choice
the regularization parameter. The second issue concerns how a choice of the regularization parameter balances the sparsity of the regularized solution and its approximation accuracy. Empirical results \cite{Li-Song-Xu2018,song2013reproducing,rudin1992nonlinear,tibshirani1996regression,yuan2006model} showed that one can obtain a solution having sparsity of certain level under a given transform of the regularization problem by choosing appropriate regularization parameter. There also
exist some theoretical results \cite{bach2011optimization,koh2007interior,shi2011concentration, tibshirani2012degrees,zou2007degrees} for choices of the parameters in some special cases.
For several specific application models, there were attempts to understand how one can choose the regularization parameter so that the resulting learned function has sparsity of certain levels. For example, sparsity of the solution of the Lasso regularized model was studied in \cite{bach2011optimization}, where the relation between the sparsity of the regularized solution and the regularization parameter was characterized. Recent studies on the degrees of freedom of the Lasso regularized model \cite{tibshirani2012degrees,zou2007degrees} provided an objectively guided
choice of the regularization parameter in such a regularization problem through Stein’s unbiased risk estimation (SURE) framework.  For the $\ell_1$ regularized Logistic regression problem, the regularization parameter is given in \cite{koh2007interior} to ensure that the regularized solution has all components zero. 
A sufficient condition for vanishing of a coefficient in a solution representation of a regularized learning method with an $\ell_1$ regularizer was presented in \cite{shi2011concentration}. These theoretical results on the regularization parameter all depend on the learned solution and no practical choice strategy of the parameter was provided in these studies.

It remains to be understood from the theoretical viewpoint for a regularized learning problem with a general convex fidelity term how the choice of the regularization parameter balances sparsity of the learned solution and its approximation error. 
The aim of this paper is to reveal theoretically how the choice of the regularization  parameter can alleviate the ill-posedness and promoting sparsity of a regularized solution. To this end, we need to first study the relation between the choice of the regularization parameter and the sparsity of the regularized solutions in a Banach space with the $\ell_1$ norm.
This issue has been considered in \cite{xu2021} for the case when the regularization term is the $\ell_0$ ``norm'' composed with a linear transform. Since the regularization problem with the $\ell_0$ ``norm'' has nice geometric interpretation even though it is non-convex, it leads to a geometric approach to understand the issue. Since the $\ell_1$ norm regularization problem has less clear geometric meaning, the geometric approach introduced in \cite{xu2021} does not seem to be applicable directly. However, it provides us with useful insights of sparse solutions. Due to the convexity of the $\ell_1$ norm, we instead approach this problem by appealing to tools available in convex analysis.

In the regularization problem to be studied in this paper, the objective function consists of a convex fidelity term and a regularization term determined by the $\ell_1$ norm composed with a transform matrix. We first 
discuss the choices of the regularization parameters when the transform matrix that appears in the regularization term reduces to the identity matrix. We have paid special attention to the cases when fidelity terms have special structures such as additive separability or block separability. In such cases, we have established a complete characterization of the sparsity of the solution, which show how we can choose the regularization parameter so that the solution has certain levels of sparsity. For the case that the fidelity term is a general convex function, we also give a sparsity characterization 
of the solution. Although in this characterization the regularization parameter depends on the solution,  we still observe from it how the choice of the 
regularization parameter influences the sparsity of the solution. We then consider the case when the transform matrix is not the identity and has an arbitrary rank. 
In such a case, by making use of the singular value decomposition of the transform matrix, we transform the original minimization problem to an equivalent constrained optimization problem having a simple transform matrix which is a two block diagonal matrix with the diagonal blocks being an identity and a zero matrix. 
The equivalent constrained optimization problem is further reformulated as an unconstrained minimization problem by employing the indicator function of the 
constraint set. In this manner, we obtain a characterization of the sparsity under the transform of the regularized solution. Results obtained in this paper are 
applied to several practical examples. Moreover, we conduct numerical experiments, to test the obtained theoretical results, which show that the parameter choices provided by this study can balance the sparsity of the solution of the regularization problem and its approximation accuracy. 

Choosing a regularization parameter to balance the sparsity of the regularized solution and its approximation accuracy in a Banach space setting is a challenging issue. Unlike the counterpart in a Hilbert space setting where the parameter was chosen to balance two error terms 
(the approximation error and the error caused by noise) 
which have the same base quantity (dimension) \cite{Chen2015,Chen-Lu-Xu-Yang2008,Chen-Micchelli-Xu2015,Egger2018,Tautenhahn1996,Tautenhahn1998},  the sparsity measure and the accuracy measure in a Banach space setting are not the same base quantity. This raises difficulties in balancing them from a theoretical standpoint. We attempt in this paper to understand theoretically the effect of the regularization parameter to the sparsity of the regularized solution and as well as to approximation errors caused by noise. We demonstrate our idea by considering the Lasso regularized model. An error estimate for a solution of the model can be obtained by a general argument established in \cite{grasmair2008sparse,grasmair2011necessary}. By combining the sparsity characterization of the regularized solution and the error estimate, we obtain a choice strategy of the regularization parameter that yields a sparse regularized solution with an error bound.  

We organize this paper in seven sections. In section 2, we describe the regularization problem to be considered and review several practical examples of importance. We characterize in section 3 the relation between the regularization parameter and levels of sparsity of the regularized solution in the case that the transform matrix is the identity. The resulting characterizations provide regularization parameter choice strategies ensuring that the regularized solution with this parameter has sparsity of a desired level. Section 4 is devoted to studying choices of the regularization parameter that guarantee desired levels of sparsity under a transform of the regularized solution. In section 5, we discuss how the regularization parameter $\lambda$ can be chosen to alleviate the ill-posedness and promoting sparsity of the regularized solutions by considering a Lasso regularized model.  In section 6, we present numerical experiments to demonstrate the regularization parameter choice strategy established in this paper. Finally in section 7 we make conclusive remarks.


\section{Regularization with the $\ell_{1}$ norm}
In this section, we describe the regularization problem, whose objective function consists of a convex fidelity term and the $\ell_{1}$ norm regularization term, to be considered in this paper, and identify several 
optimization models of practical importance which can be formulated in this general form.

We begin with describing the regularization problem under investigation. For each $d\in\mathbb{N}$, let $\mathbb{N}_d:=\{1,2,\ldots,d\}$ and set $\mathbb{N}_0:=\emptyset.$ For $\mathbf{x}:=[x_j:j\in\mathbb{N}_d]\in\mathbb{R}^d$, we define its $\ell_1$ norm by $\|\mathbf{x}\|_1:=\sum_{j\in \mathbb{N}_d}|x_j|$. For $m,n\in\mathbb{N}$, suppose that $\bm{\psi}:\mathbb{R}^n\to\mathbb{R}_+:=[0,+\infty)$ is a convex function and $\mathbf{B}$ is an $m\times n$ real matrix. We consider the regularization problem
\begin{equation}\label{optimization_problem_nm}
\min
\left\{\bm{\psi}(\mathbf{u})+\lambda\|\mathbf{B}\mathbf{u}\|_{1}:\mathbf{u}\in\mathbb{R}^n\right\},
\end{equation}
where $\lambda$ is a positive regularization parameter. The regularization problem \eqref{optimization_problem_nm} covers many application problems. We present several examples below.

The Lasso regularized model \cite{tibshirani1996regression}  is a special case of the regularization problem \eqref{optimization_problem_nm}. Specifically, let $p\in\mathbb{N}$ and $\|\cdot\|_2$ denote the standard Euclidean norm on $\mathbb{R}^p$. Suppose that $\mathbf{x}\in\mathbb{R}^p$ is a response vector and $\mathbf{A}\in\mathbb{R}^{p\times n}$ is a predictor matrix. When the fidelity  term  $\bm{\psi}$ is chosen as
\begin{equation}\label{square_psi_u}
\bm{\psi}(\mathbf{u}):=\frac{1}{2}\|\mathbf{Au}-\mathbf{x}\|_2^2, \ \ \mathbf{u}\in\mathbb{R}^{n}, 
\end{equation} 
and the transform matrix $\mathbf{B}$ as the identity matrix $\mathbf{I}_n$ of order $n$, the regularization problem \eqref{optimization_problem_nm} reduces to the Lasso regularized model
\begin{equation}\label{lasso}
\min \left\{\frac{1}{2}\|\mathbf{Au}-\mathbf{x}\|_2^2+\lambda\|\mathbf{u}\|_1:\mathbf{u}\in\mathbb{R}^n\right\}.
\end{equation}
The generalized Lasso regularized model \cite{ali2019generalized,tibshirani2011solution} enforces certain structural constraints on the coefficients in a linear regression by defining the regularization term employing the $\ell_{1}$ norm composed with a transform matrix. That is, letting $\mathbf{B}$ be an $m\times n$ real matrix, the generalized Lasso regularized model has the form 
\begin{equation}\label{generalized_lasso}
\min \left\{\frac{1}{2}\|\mathbf{Au}-\mathbf{x}\|_2^2+\lambda\|\mathbf{Bu}\|_1:\mathbf{u}\in\mathbb{R}^n\right\}.
\end{equation}
The regularization problem \eqref{generalized_lasso} covers many important areas where  different choices of matrices $\mathbf{A}$ and $\mathbf{B}$ are taken. 

In signal or image denoising processes, the two positive integers $p$ and $n$ are equal and the matrix $\mathbf{A}$ is chosen as the identity matrix of order $n$. The transform matrix $\mathbf{B}$ is often chosen to reflect some believed structure or geometry in the signal or the image. For example, if the transform matrix $\mathbf{B}$ is chosen as the $(n-1)\times n$ first order difference matrix $\mathbf{D}^{(1)}:=[d_{ij}:i\in\mathbb{N}_{n-1}, j\in\mathbb{N}_{n}]$ with $d_{ii}=-1$, $d_{i,i+1}=1$ for $i\in\mathbb{N}_{n-1}$ and $0$ otherwise,
then the generalized Lasso regularized model \eqref{generalized_lasso} describes the one-dimensional  fused Lasso model \cite{tibshirani2005sparsity}, which is also called the one-dimensional total-variation denoising model \cite{condat2013direct}. 
If $\mathbf{B}$ is chosen as the two-dimensional difference matrix giving both the horizontal and vertical differences between pixels, then the generalized Lasso regularized model \eqref{generalized_lasso} coincides with the two-dimensional fused Lasso model \cite{tibshirani2005sparsity} or the ROF total-variation denoising model \cite{li2015multi,micchelli2011proximity,rudin1992nonlinear}. 

Another example that concerns the polynomial trend filtering is described below. For each $k\in\mathbb{N}$, let $\mathbf{D}^{(1,k)}$ denote the $(n-k-1)\times (n-k)$ first order difference matrix. Accordingly, the difference matrix of $k+1$ order is defined recursively by $\mathbf{D}^{(k+1)}
:=\mathbf{D}^{(1,k)}\mathbf{D}^{(k)}$, $k\in\mathbb{N}$.
The polynomial trend filtering of order $k$ has the form \eqref{generalized_lasso} with the matrix $\mathbf{A}:=\mathbf{I}_n$  and the transform matrix $\mathbf{B}:=\mathbf{D}^{(k+1)}$. In the special case that $k=1$, the generalized Lasso regularized model \eqref{generalized_lasso} reduces to the linear trend filtering \cite{kim2009ell_1,wang2016trend}. The transform matrix $\mathbf{B}$ may also be chosen as a discrete wavelet transform \cite{Chang2000adaptive,Daubechies1992,donoho1995adapting,micchelli1994using,micchelli1997,Tomassi2015}, a framelet transform \cite{Li-Shen2011,lian2011},
a discrete cosine transform \cite{Jafarpour2009,strang1999discrete} or a discrete Fourier transform \cite{Gasquet1999,McCann2020,Yang2011}, depending on specific applications in signal or imaging processing. The resulting regularized model aims at representing a signal or an image as a sparse linear combination of certain basis functions.

Data in many applications often carry a group structure where they are partitioned into disjoint pieces.  Structured sparsity approaches recently received considerable attention in statistics, machine learning and signal processing. A natural extension of the Lasso regularized model \eqref{lasso} is the group Lasso regularized model \cite{Boyd2011, Buhlmann2011,Jenatton2011,yuan2006model}. We now briefly review this model. For $d,n\in\mathbb{N}$ with $d\leq n$, we suppose that $\mathcal{S}:=\left\{S_{1},S_{2},\ldots, S_{d}\right\}$ is a partition of the index set $\mathbb{N}_n$ which satisfy $S_j\neq\emptyset$, for all $j\in\mathbb{N}_d$, $S_j\cap S_k=\emptyset$ if $j\neq k$, and $\cup_{j\in\mathbb{N}_d}S_j=\mathbb{N}_n$. For each $j\in \mathbb{N}_d$ we denote by $n_j$ the cardinality of $S_j$ and regard $S_j$ as an {\it ordered set} in the natural order of elements in $\mathbb{N}_n$. Specifically, we assume $S_j:=\{i(j)_1, i(j)_2, \dots, i(j)_{n_j}\}$, with $i(j)_l\in \mathbb{N}_n$, $l\in \mathbb{N}_{n_j}$ and $i(j)_1<i(j)_2<\dots<i(j)_{n_j}$. Associated with the partition $\mathcal{S}$, we decompose a vector $\mathbf{u}:=[u_k:k\in\mathbb{N}_n]\in\mathbb{R}^n$
into $d$ sub-vectors by setting $\mathbf{u}_j:=[u_{{i(j)}_1},u_{{i(j)}_2}, \dots, u_{{i(j)}_{n_j}}]\in\mathbb{R}^{n_j}$ for each $j\in\mathbb{N}_d$. With this notation, the group Lasso regularized model can be described as  
\begin{equation}\label{group_lasso}
\min\left\{\frac{1}{2}\left\|\mathbf{A}\mathbf{u}-\mathbf{x}\right\|_2^2+\lambda\sum_{j\in\mathbb{N}_d}\sqrt{n_j}\|\mathbf{u}_j\|_2:\mathbf{u}\in\mathbb{R}^{n}\right\}.
\end{equation}
The regularizer in the above regularization problem could be viewed as a group-wise $\ell_1$ norm. If the partition $\mathcal{S}:=\{S_1,S_2,\ldots,S_n\}$ is chosen as the nature partition defined by $S_j:=\{j\}$ for all $j\in\mathbb{N}_n$, then the group Lasso regularized model \eqref{group_lasso} reduces to Lasso regularized model \eqref{lasso}. It is known \cite{Juditsky2012,yuan2006model} that the model \eqref{group_lasso} performs better than the lasso regularized model when the optimal variable has the group structure. 

Support vector machines (SVMs) for both classification and regression with the $\ell_1$ norm can be reformulated in the form \eqref{optimization_problem_nm}. We first present the $\ell_1$ SVM
classification model  \cite{scholkopf2002learning,smola2004tutorial}. Given training data $D:=\{(\mathbf{x}_j,y_j): j\in\mathbb{N}_n\}$ composed of input data points $X:=\{\mathbf{x}_j:j\in\mathbb{N}_n\}\subset\mathbb{R}^d$ and output data values $Y:=\{y_j:j\in\mathbb{N}_n\}\subset\{1,-1\}$. A hyperplane determined by $\bm{\alpha}\in\mathbb{R}^n$ and $b\in\mathbb{R}$ is wished to separate the data $D$ into two groups for $y_j=1$ and $y_j=-1$. By introducing a loss function  ${L}_{D}:\mathbb{R}^n\times\mathbb{R}\to \mathbb{R}_{+}$, the parameters $\bm{\alpha},b$ are obtained by the $\ell_1$ SVM classification model
\begin{equation}\label{l1_SVM}
\min
\left\{{L}_{D}(\bm{\alpha},b)+\lambda\|\bm{\alpha}\|_1:\bm{\alpha}\in\mathbb{R}^n, b\in\mathbb{R}\right\}.
\end{equation}
Let $K:\mathbb{R}^d\times\mathbb{R}^d\to\mathbb{R}$ be a given reproducing kernel.  
A commonly used loss function ${L}_{D}$ in the $\ell_1$ SVM classification model \eqref{l1_SVM} is the hinge loss function defined by
\begin{equation}\label{Loss_LD}
{L}_{D}(\bm{\alpha},b)
:=\sum_{j\in\mathbb{N}_n}\mathrm{max}\left\{1-y_j\left(\sum_{k\in\mathbb{N}_n}
\alpha_{k}K(\mathbf{x}_k,\mathbf{x}_j)+b\right),0\right\}, \end{equation} 
for all $\bm{\alpha}:=[\alpha_j:j\in\mathbb{N}_n]\in\mathbb{R}^n$ and all $b\in\mathbb{R}$. 
We next rewrite the $\ell_1$ SVM classification model \eqref{l1_SVM} with the hinge loss function \eqref{Loss_LD} in the form  \eqref{optimization_problem_nm}.
Throughout this paper, we write $\llbracket\mathbf{a},\mathbf{b}\rrbracket$ to denote the vector
$\begin{bmatrix}
\mathbf{a}\\
\mathbf{b}
\end{bmatrix}\in\mathbb{R}^{d_1+d_2}$ for all $\mathbf{a}\in\mathbb{R}^{d_1}$ and $\mathbf{b}\in\mathbb{R}^{d_2}$.
Associated with $\bm{\alpha}\in\mathbb{R}^n$ and $b\in\mathbb{R}$, we introduce the vector 
$\mathbf{u}:=\llbracket\bm{\alpha},b\rrbracket\in \mathbb{R}^{n+1}.
$
We define the kernel matrix 
$\mathbf{K}:=[K(\mathbf{x}_j,\mathbf{x}_k):j,k\in\mathbb{N}_n]$ 
and augment it to $\mathbf{K}':=[\mathbf{K} \ \mathbf{1}_n]$ with $
\mathbf{1}_n:=[1,1,\ldots,1]^\top\in\mathbb{R}^n$.
We also define the diagonal matrix $\mathbf{Y}:=\mathrm{diag}(y_j: j\in\mathbb{N}_n)$ and the function 
\begin{equation}\label{sum of hinge loss}    \bm{\phi}(\mathbf{z}):=\sum_{j\in\mathbb{N}_n}\mathrm{max}\{1-z_j,0\},\ \mbox{for all}\ \mathbf{z}:=[z_j:j\in\mathbb{N}_n]\in\mathbb{R}^n.
\end{equation}
Then by introducing the fidelity term by 
$\bm{\psi}(\mathbf{u}):=\bm{\phi}(\mathbf{Y}\mathbf{K}'\mathbf{u})$, $\mathbf{u}\in\mathbb{R}^{n+1}$ and choose 
$\mathbf{B}:=[\mathbf{I}_n \ \mathbf{0}]\in\mathbb{R}^{n\times(n+1)}$, the $\ell_1$ SVM classification model \eqref{l1_SVM} can be rewritten in the form of \eqref{optimization_problem_nm}.

Another popular choice of the loss function ${L}_{D}$ in the $\ell_1$ SVM classification model \eqref{l1_SVM} is the squared loss function defined by
\begin{equation}\label{square_loss}
{L}_{D}(\bm{\alpha},b)
:=\frac{1}{2}\sum_{j\in\mathbb{N}_n}\left(\sum_{k\in\mathbb{N}_n}
\alpha_{k}K(\mathbf{x}_k,\mathbf{x}_j)+b-y_j\right)^2.
\end{equation}
By setting $\mathbf{y}:=[y_j:j\in\mathbb{N}_n]$, the $\ell_1$ SVM classification model \eqref{l1_SVM} with the squared loss function \eqref{square_loss} can be identified as the form \eqref{optimization_problem_nm} with 
$\bm{\psi}(\mathbf{u}):=\frac{1}{2}\|\mathbf{K}'\mathbf{u}-\mathbf{y}\|_2^2$, $ \mathbf{u}\in\mathbb{R}^{n+1}$, and $\mathbf{B}:=[\mathbf{I}_n \ \mathbf{0}]\in\mathbb{R}^{n\times(n+1)}.$ We note that this model is also the generalized Lasso model \eqref{generalized_lasso} with $\mathbf{x}:=\mathbf{y}$, $\mathbf{A}:=\mathbf{K}'$ and $\mathbf{B}:=[\mathbf{I}_n \ \mathbf{0}].$

When $L_{D}$ in the $\ell_1$ SVM classification model \eqref{l1_SVM} is chosen as the average logistic loss function defined for $\bm{\alpha}\in\mathbb{R}^d$ and $b\in\mathbb{R}$ by 
$L_{D}(\bm{\alpha},b):=\frac{1}{n}\sum_{j\in\mathbb{N}_n}
\mathrm{ln}\left(1+\mathrm{exp}
\left(-y_j\left(\bm{\alpha}^{\top}\mathbf{x}_j+b\right)\right)\right)$,
it is the $\ell_1$ regularized logistic regression model. It can be written in the form  \eqref{optimization_problem_nm} with the fidelity term 
$\bm{\psi}(\mathbf{u}):=\bm{\phi}(\mathbf{Y}\mathbf{X}'\mathbf{u})$, $ \mathbf{u}:=\llbracket\bm{\alpha},b\rrbracket\in\mathbb{R}^{d+1}$, and matrix $\mathbf{B}:=[\mathbf{I}_d\ \mathbf{0}]\in\mathbb{R}^{d\times(d+1)}$,
where $\mathbf{X}:=[\mathbf{x}_j:j\in \mathbb{N}_{n}]^{\top}$, $\mathbf{X}':=[\mathbf{X}\ \mathbf{1}_{n}]$ and
\begin{equation}\label{logistic_regression}
\bm{\phi}(\mathbf{z})
:=\frac{1}{n}\sum_{j\in\mathbb{N}_n}
\mathrm{ln}(1+\mathrm{exp}(-z_j)),
\ \mbox{for all}\ \mathbf{z}:=[z_j:j\in\mathbb{N}_{n}]
\in\mathbb{R}^{n}.
\end{equation}

We now turn to describing the $\ell_1$ SVM regression model \cite{bi2003dimensionality,smola2004tutorial} which aims at learning a function from the observed data $D:=\{(\mathbf{x}_j,y_j):j\in\mathbb{N}_n\}\subset\mathbb{R}^d\times\mathbb{R}$. Specifically, the $\ell_1$ SVM regression model has the same form as for the classification
model \eqref{l1_SVM}, with a different loss function ${L}_{D}$. A popular choice of  ${L}_{D}$ is the $\epsilon$-insensitive loss function \cite{Vladimir1998} in the form 
\begin{equation}\label{epsilon-insensitive}
{L}_{D}(\bm{\alpha},b)
:=\sum\limits_{j\in\mathbb{N}_n}
\mathrm{max}\left\{\left|\sum_{k\in\mathbb{N}_n}
\alpha_{k}K(\mathbf{x}_k,\mathbf{x}_j)+b-y_j\right|-\epsilon,0\right\},
\end{equation}
where $\epsilon$ is a positive parameter and $K$ is a given reproducing kernel on $\mathbb{R}^d$. 
We may rewrite this $\ell_1$ SVM regression model in the form of \eqref{optimization_problem_nm}. To this end, we define the vector
$\mathbf{u}$, the kernel matrix $\mathbf{K}$ and its augmentation matrix $\mathbf{K}'$ as in the classification model. Associated with the output data values  $\mathbf{y}:=[y_j:j\in\mathbb{N}_n]\in\mathbb{R}^n$ and the parameter $\epsilon>0$, we introduce the function $\bm{\phi}_{\mathbf{y},\epsilon}$ on $\mathbb{R}^n$ by
\begin{equation}\label{sum of epsilon-insensitive loss}
\bm{\phi}_{\mathbf{y},\epsilon}(\mathbf{z}):=\sum\limits_{j\in\mathbb{N}_n}
\mathrm{max}\{|z_j-y_j|-\epsilon,0\},\ \mbox{for all}\ \mathbf{z}:=[z_j:j\in\mathbb{N}_n]\in\mathbb{R}^n.
\end{equation}
In this notation, the $\ell_1$ SVM regression model with the $\epsilon$-insensitive loss function \eqref{epsilon-insensitive} can be rewritten in the form \eqref{optimization_problem_nm} with the fidelity term 
$\bm{\psi}(\mathbf{u}):=\bm{\phi}_{\mathbf{y},\epsilon}(\mathbf{K}'\mathbf{u})$, $ \mathbf{u}\in\mathbb{R}^{n+1}$, and matrix  $\mathbf{B}:=[\mathbf{I}_n \ \mathbf{0}]\in\mathbb{R}^{n\times(n+1)}$.
The squared loss function ${L}_{D}$  defined by \eqref{square_loss} with $\mathbf{y}:=[y_j:j\in\mathbb{N}_n]\in\mathbb{R}^n$ is often used in the $\ell_1$ SVM regression model. In this case, the $\ell_1$ SVM regression model is equivalent to the  regularization problem  \eqref{optimization_problem_nm}
composed of the fidelity term $\bm{\psi}$ defined by $\bm{\psi}(\mathbf{u}):=\frac{1}{2}\|\mathbf{K}'\mathbf{u}-\mathbf{y}\|_2^2$, $ \mathbf{u}\in\mathbb{R}^{n+1}$, with $\mathbf{y}:=[y_j:j\in\mathbb{N}_n]\in\mathbb{R}^n$ and the transform matrix $\mathbf{B}:=[\mathbf{I}_n \ \mathbf{0}]\in\mathbb{R}^{n\times(n+1)}$.

The regularization problem
\eqref{optimization_problem_nm} also appears in regularized learning in RKBSs. In such spaces, the regularized learning problem is usually an infinite dimensional optimization problem. The remarkable representer theorem  \cite{cox1990asymptotic,kimeldorf1970correspondence,scholkopf2001generalized,unser2021unifying,wang2021representer} reduces the solutions to finding coefficients of a finite number of elements in the space. In particular, the regularized learning model in the RKBS with the $\ell_1$ norm \cite{song2011reproducing,song2013reproducing} can be formulated in the form \eqref{optimization_problem_nm}. Specifically, suppose that $\{(\mathbf{x}_j,y_j):j\in \mathbb{N}_n\}\subset\mathbb{R}^d\times \mathbb{R}$ are given with $\mathbf{y}:=[y_j:j\in\mathbb{N}_n]$, $K:\mathbb{R}^d\times\mathbb{R}^d\to\mathbb{R}$ is a given reproducing kernel and $\mathbf{K}:=[K(\mathbf{x}_j,\mathbf{x}_k):j,k\in\mathbb{N}_n]$ is the resulting kernel matrix. The regularized learning model in the RKBS with the $\ell_1$ norm has the form of \eqref{optimization_problem_nm} with $\bm{\psi}(\mathbf{u}):=\|\mathbf{Ku}-\mathbf{y}\|_2^2$ and $\mathbf{B}:=\mathbf{I}_n$.

\section{Parameter Choices for Sparsity of the Regularized Solutions}

In this section and the next one, we discuss choices of the regularization parameter so that a solution of the resulting regularization  problem \eqref{optimization_problem_nm} has sparsity of a prescribed level. In this section we first consider the special case when $m=n$ and  $\mathbf{B}:=\mathbf{I}_n$. In this case, the regularization problem \eqref{optimization_problem_nm} has the special form
\begin{equation}\label{optimization_problem}
\min
\left\{\bm{\psi}(\mathbf{u})+\lambda\|\mathbf{u}\|_{1}:\mathbf{u}\in\mathbb{R}^n\right\}.
\end{equation}
We postpone the general case to the next section.

As a preparation, we recall the definition of the level of sparsity for a vector in $\mathbb{R}^n$. For each $n\in\mathbb{N}$, we set $\mathbb{Z}_n:=\{0,1,\ldots,n-1\}$. A vector $\mathbf{x}\in\mathbb{R}^n$ is said to have sparsity of level $l\in\mathbb{Z}_{n+1}$ if it has exactly $l$ nonzero components. To further characterize sparsity of vectors in $\mathbb{R}^n$, we make use of the sparsity partition of $\mathbb{R}^n$, introduced initially in \cite{xu2021}. For each $j\in\mathbb{N}_n$, we denote by $\mathbf{e}_j$ the unit vector with $1$ for the $j$-th component and 0 otherwise. The vectors $\mathbf{e}_j,j\in\mathbb{N}_n,$ form the canonical basis for $\mathbb{R}^n$. Using these vectors, we define $n+1$ numbers of subsets of $\mathbb{R}^n$ by
\begin{align}\label{Partition_A}
&\Omega_0:=\{\mathbf{0}\in\mathbb{R}^n\}, \nonumber\\
&\Omega_l:=\left\{\sum_{j\in\mathbb{N}_l}u_{k_j}\mathbf{e}_{k_j}
:u_{k_j}\in\mathbb{R}\setminus{\{0\}},\ \mathrm{for} \ 
1\leq k_1<k_2<\cdots< k_l\leq n\right\},
\ \mathrm{for} \ l\in\mathbb{N}_n.
\end{align}
It was shown in  \cite{xu2021} that the sets $\Omega_l,l\in \mathbb{Z}_{n+1}$, defined by \eqref{Partition_A}, are mutually disjoint and form a partition for $\mathbb{R}^n$, that is,
$\mathbb{R}^{n}=\bigcup_{l\in\mathbb{Z}_{n+1}}\Omega_l.$
Note that for each $l\in\mathbb{Z}_{n+1}$, $\Omega_l$ is the set of all vectors in $\mathbb{R}^n$ having sparsity of level $l$. The goal of this study is to relate the choice of the regularization parameter $\lambda$ with the set $\Omega_l$ to which a solution $\mathbf{u}$ of the regularization problem \eqref{optimization_problem} belongs.

We will employ the notion of the subdifferential of a convex function on $\mathbb{R}^n$ for this study. The subdifferential of a real-valued convex function $f: \mathbb{R}^n\to \mathbb{R}$ at $\mathbf{x}\in\mathbb{R}^n$ is defined by
\begin{equation*}\label{subgradients}
\partial f(\mathbf{x}):=\{\mathbf{y}\in\mathbb{R}^n: \ f(\mathbf{z})\geq f(\mathbf{x})+\langle \mathbf{y},\mathbf{z}-\mathbf{x}\rangle, \ \mathrm{for} \ \mathrm{all} \ \mathbf{z}\in\mathbb{R}^n\}.
\end{equation*}
Elements in $\partial f(\mathbf{x})$ are called subgradients. Suppose that $f$ and $g$ are two real-valued convex functions on $\mathbb{R}^n$. It is known \cite{zalinescu2002convex} that if $g$ is continuous on $\mathbb{R}^n$ then there holds 
$\partial (f+g)(\mathbf{x})
=\partial f(\mathbf{x}) +\partial g(\mathbf{x})$, for all $\mathbf{x}\in\mathbb{R}^n$.

We are now ready to characterize the sparsity of a solution of the regularization problem \eqref{optimization_problem}. We start with the case that the fidelity term $\bm{\psi}$ involved in \eqref{optimization_problem} is additively separable. That is, there exist $n$ univariate functions $\psi_{j}$, $j\in\mathbb{N}_{n}$, on $\mathbb{R}$ such that 
\begin{equation}\label{separable_psi}
\bm{\psi}(\mathbf{u}):=\sum_{j\in\mathbb{N}_n}\psi_{j}(u_j),\ 
\mbox{for all}\ \mathbf{u}:=[u_j:j\in\mathbb{N}_{n}]\in\mathbb{R}^n.
\end{equation}
It is easy to see that the $\ell_1$ norm $\|\cdot\|_1$ is an additively separable function. Due to the additive separability of the objective function, we observe that $\mathbf{u}^{*}:=[u^*_j:j\in\mathbb{N}_n]$ is a solution of the regularization problem \eqref{optimization_problem} if and only if  for each $j\in\mathbb{N}_{n}$, $u_j^*$ is a solution of the regularization problem
$\min\{\psi_{j}(u_j)+\lambda|u_j|:u_j\in\mathbb{R}\}.$
Based upon this observation, it suffices to consider the  regularization problem 
\begin{equation}\label{optimization_1m}
\min
\left\{\psi(u)+\lambda|u|:u\in\mathbb{R}\right\},
\end{equation}
with $\psi$ being a convex function on $\mathbb{R}$, and discuss how we choose an appropriate regularization parameter so that the resulting regularization problem has a zero solution. 

We give a lemma which concerns a parameter choice for the regularization problem \eqref{optimization_1m}. It is known \cite{zalinescu2002convex} that for a convex function $\psi:\mathbb{R}\rightarrow\mathbb{R}$, both of its left derivative $\psi'_{-}$ and its right derivative $\psi'_{+}$ exist at any $u\in\mathbb{R}$. Moreover,
\begin{equation}\label{psi_subbdiff}
\partial\psi(u)=[\psi'_{-}(u),\psi'_{+}(u)], \ \ \mbox{for all}\ \ u\in\mathbb{R}.
\end{equation}

\begin{lemma}\label{choice_sparsity_1dimension_nonsmooth}
Suppose that $\psi$ is a convex function on $\mathbb{R}$. Then the regularization problem \eqref{optimization_1m} with $\lambda>0$ has a solution $u^{*}=0$ if and only if
$\lambda \geq \mathcal{\max}\{\psi'_{-}(0),-\psi'_{+}(0)\}.$
\end{lemma}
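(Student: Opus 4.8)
The plan is to use the first-order optimality condition for convex minimization: $u^{*}$ is a minimizer of the convex function $F(u):=\psi(u)+\lambda|u|$ on $\mathbb{R}$ if and only if $0\in\partial F(u^{*})$. Since both $\psi$ and $\lambda|\cdot|$ are real-valued convex functions on $\mathbb{R}$ and the absolute value is continuous, the subdifferential sum rule recalled in the excerpt gives $\partial F(u)=\partial\psi(u)+\lambda\,\partial|\cdot|(u)$. Specializing at $u=0$, we have $\partial|\cdot|(0)=[-1,1]$, so $\partial F(0)=\partial\psi(0)+\lambda[-1,1]=[\psi'_{-}(0)-\lambda,\;\psi'_{+}(0)+\lambda]$, where the last equality uses the identity \eqref{psi_subbdiff} identifying $\partial\psi(0)$ with the interval $[\psi'_{-}(0),\psi'_{+}(0)]$.

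From here the argument is a short interval computation. The condition $0\in\partial F(0)$ is equivalent to $\psi'_{-}(0)-\lambda\le 0\le\psi'_{+}(0)+\lambda$, i.e. to the pair of inequalities $\lambda\ge\psi'_{-}(0)$ and $\lambda\ge-\psi'_{+}(0)$ holding simultaneously, which is precisely $\lambda\ge\max\{\psi'_{-}(0),\,-\psi'_{+}(0)\}$. Thus the "if and only if" reduces entirely to the equivalence between "$u^{*}=0$ is a solution of \eqref{optimization_1m}" and "$0\in\partial F(0)$", together with the explicit description of $\partial F(0)$. I would present both directions explicitly: for sufficiency, assume $\lambda\ge\max\{\psi'_{-}(0),-\psi'_{+}(0)\}$, deduce $0\in\partial F(0)$, and invoke the definition of the subdifferential to conclude $F(u)\ge F(0)$ for all $u$; for necessity, assume $0$ is a minimizer, note $0\in\partial F(0)$ by the optimality characterization, and read off the inequality from the interval description.

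I do not anticipate a genuine obstacle here — the lemma is essentially a direct application of convex subdifferential calculus in one variable. The one point deserving a sentence of care is the validity of the sum rule $\partial(\psi+\lambda|\cdot|)(0)=\partial\psi(0)+\lambda\partial|\cdot|(0)$; this is exactly the continuity-based additivity statement quoted earlier in the excerpt (with $g$ continuous, hence applicable since $|\cdot|$ is continuous on all of $\mathbb{R}$), so no further justification is needed. A second minor point is that $\lambda\partial|\cdot|(0)=[-\lambda,\lambda]$ uses $\lambda>0$, which is part of the hypothesis. Everything else is bookkeeping with the endpoints of a closed interval.
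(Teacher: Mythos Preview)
Your proposal is correct and follows essentially the same approach as the paper's own proof: apply the Fermat rule together with the subdifferential sum rule (justified by continuity of $|\cdot|$), identify $\partial\psi(0)=[\psi'_{-}(0),\psi'_{+}(0)]$ and $\partial|\cdot|(0)=[-1,1]$, and reduce the optimality condition $0\in[\psi'_{-}(0)-\lambda,\psi'_{+}(0)+\lambda]$ to the stated inequality on $\lambda$. Your remarks on the role of $\lambda>0$ and the sum rule are accurate and match the paper's reasoning.
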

\begin{proof}
By employing the Fermat rule \cite{zalinescu2002convex} and the continuity of the function $|\cdot|$, we have that $u^{*}=0$ is a solution of the regularization problem \eqref{optimization_1m} if and only if 
\begin{equation}\label{0_subgradient_1m_nonsmooth}
{0}\in \partial\psi(0) + \lambda \partial|\cdot|(0).
\end{equation}
It follows from the definition of the subdifferential that 
\begin{equation}\label{abs_subbiff}
\partial|\cdot|(0)=
\{y\in\mathbb{R}:|y|\leq 1\}.
\end{equation}
By equation  \eqref{psi_subbdiff} with $u$ being replaced by $0$ and equation \eqref{abs_subbiff}, the inclusion relation \eqref{0_subgradient_1m_nonsmooth}
has the equivalent form 
$0\in[\psi'_{-}(0),\psi'_{+}(0)]+[-\lambda, \lambda],$
which is further equivalent to 
$0\in[\psi'_{-}(0)-\lambda,\psi'_{+}(0)+\lambda].$
The latter is equivalent to
$\psi'_{-}(0)\leq\lambda$  and $-\psi'_{+}(0)\leq\lambda$.
Consequently, we obtain the desired result of this lemma.
\end{proof}

Lemma \ref{choice_sparsity_1dimension_nonsmooth} allows us to obtain a choice of the parameter $\lambda$ with which the regularization problem \eqref{optimization_problem}, with the fidelity term $\bm{\psi}$ being in the form \eqref{separable_psi}, has a solution with sparsity of a general level. For each $j\in\mathbb{N}_n$, let $\psi'_{j,-}$ and $\psi'_{j,+}$ denote the left and right derivatives of $\psi_j$, respectively.

\begin{theorem}\label{choice_sparsity_separable_nonsmooth}
Suppose that $\psi_j$, $j\in\mathbb{N}_n$, are  convex functions on $\mathbb{R}$ and $\bm{\psi}$ has the form \eqref{separable_psi}. Then the regularization problem \eqref{optimization_problem} with $\lambda>0$ has a solution having sparsity of level $l'$ with $l'\leq l$ for some $l\in\mathbb{Z}_{n+1}$ if and only if there exist distinct 
$k_i \in \mathbb{N}_n$, $i\in\mathbb{N}_l$, such that 
\begin{equation}\label{lambda_separable_nonsmooth}
\lambda\geq\mathcal{\max}\left\{\psi'_{j,-}(0),-\psi'_{j,+}(0)\right\},\ \ \mbox{for all}\ \ j\in\mathbb{N}_n\setminus{\{k_i:i\in\mathbb{N}_l\}}.
\end{equation}
In particular, if $\psi_j$, $j\in\mathbb{N}_n$, are differentiable, then condition  \eqref{lambda_separable_nonsmooth} is equivalent to
\begin{equation}\label{lambda_separable_smooth}
\lambda\geq|\psi_{j}'(0)|,\ \ \mbox{for all}\ \ j\in\mathbb{N}_n\setminus{\{k_i:i\in\mathbb{N}_l\}}.
\end{equation}
\end{theorem}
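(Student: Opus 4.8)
The plan is to reduce the $n$-dimensional problem \eqref{optimization_problem} to $n$ scalar problems of the form \eqref{optimization_1m} by exploiting the additive separability already recorded before Lemma~\ref{choice_sparsity_1dimension_nonsmooth}, and then to apply that lemma coordinatewise. As a preliminary step I would note that the standing hypothesis $\bm{\psi}\geq 0$ forces each $\psi_j$ to be bounded below on $\mathbb{R}$ (freeze the other coordinates and use $\bm{\psi}\geq 0$), so that $\psi_j+\lambda|\cdot|$ is coercive and the scalar problem \eqref{optimization_1m} with $\psi=\psi_j$ has a minimizer for every $\lambda>0$. Together with the separability observation this yields the working characterization: $\mathbf{u}^{*}=[u_j^{*}:j\in\mathbb{N}_n]$ solves \eqref{optimization_problem} if and only if $u_j^{*}$ minimizes $\psi_j(u)+\lambda|u|$ over $\mathbb{R}$ for every $j\in\mathbb{N}_n$, and the sparsity level of $\mathbf{u}^{*}$ equals the number of indices $j$ with $u_j^{*}\neq 0$.

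For sufficiency, assuming \eqref{lambda_separable_nonsmooth} holds for some distinct $k_1,\dots,k_l$, I would invoke Lemma~\ref{choice_sparsity_1dimension_nonsmooth} to conclude that $u=0$ solves the scalar problem for each index $j\in\mathbb{N}_n\setminus\{k_i:i\in\mathbb{N}_l\}$, then choose for each $i$ an arbitrary minimizer $u_{k_i}^{*}$ of $\psi_{k_i}(u)+\lambda|u|$ (which exists by the coercivity noted above), and assemble these coordinates into a vector which, by the working characterization, solves \eqref{optimization_problem} and has at most $l$ nonzero components, hence sparsity level $l'\leq l$.

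For necessity, starting from a solution $\mathbf{u}^{*}$ of \eqref{optimization_problem} with sparsity level $l'\leq l$, I would let $T$ be its support, enlarge it (using $l'\leq l\leq n$) to a set $\{k_i:i\in\mathbb{N}_l\}$ of $l$ distinct indices in $\mathbb{N}_n$, and observe that for every $j$ outside this set one has $u_j^{*}=0$; the coordinatewise characterization then says $u=0$ minimizes $\psi_j(u)+\lambda|u|$, and Lemma~\ref{choice_sparsity_1dimension_nonsmooth} delivers $\lambda\geq\max\{\psi'_{j,-}(0),-\psi'_{j,+}(0)\}$, which is \eqref{lambda_separable_nonsmooth}. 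The differentiable case is then immediate: $\psi'_{j,-}(0)=\psi'_{j,+}(0)=\psi_j'(0)$ gives $\max\{\psi'_{j,-}(0),-\psi'_{j,+}(0)\}=|\psi_j'(0)|$, so \eqref{lambda_separable_nonsmooth} becomes \eqref{lambda_separable_smooth}.

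The one point that needs care is the existence of the scalar minimizers used in the sufficiency direction: without boundedness below of $\psi_j$, the function $\psi_j+\lambda|\cdot|$ may fail to attain its infimum and the ``if'' direction would break, so the argument genuinely uses the nonnegativity of $\bm{\psi}$. Everything else is bookkeeping with the index sets $T$ and $\{k_i:i\in\mathbb{N}_l\}$, together with the trivial identity $\max\{a,-a\}=|a|$.
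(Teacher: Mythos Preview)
Your proposal is correct and follows essentially the same approach as the paper's proof: both reduce to the scalar problems via separability and apply Lemma~\ref{choice_sparsity_1dimension_nonsmooth} coordinatewise, handling sufficiency by assembling scalar minimizers and necessity by reading off the support and enlarging to $l$ indices. The one substantive addition in your argument is the explicit justification that each scalar problem $\min\{\psi_{k_i}(u)+\lambda|u|\}$ actually attains its minimum, via the observation that $\bm{\psi}\geq 0$ forces each $\psi_j$ to be bounded below and hence $\psi_j+\lambda|\cdot|$ to be coercive; the paper simply writes ``we choose for each $i\in\mathbb{N}_l$ a solution $u_{k_i}^*$'' without comment, so your version closes a small gap the paper leaves implicit.
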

\begin{proof}
We first consider the case when the regularization problem \eqref{optimization_problem} has the most sparse solution, that is,  $l=0$. Since $\bm{\psi}$ has the form \eqref{separable_psi}, we have that  $\mathbf{u}^*=\mathbf{0}$ is a solution of  \eqref{optimization_problem} if and only if 
for each $j\in\mathbb{N}_{n}$, $u_j^*=0$ is a solution of \eqref{optimization_1m} with $\psi:=\psi_j$. The latter guaranteed by Lemma \ref{choice_sparsity_1dimension_nonsmooth} is equivalent to $\lambda\geq \mathcal{\max}\{\psi'_{j,-}(0),-\psi'_{j,+}(0)\}$ for all $j\in\mathbb{N}_n$, that is, the inequalities \eqref{lambda_separable_nonsmooth} with $l=0$ hold.

We now prove the case when $l\neq0$. On one hand, suppose that $\mathbf{u}^{*}$ is a solution of the regularization problem \eqref{optimization_problem} having sparsity of level $l'$ with $l'\leq l$ for some $l\in\mathbb{N}_n$. It follows from $\mathbf{u}^{*}\in \Omega_{l'}$ that there exist $1\leq k_1<k_2<\cdots< k_{l'}\leq n$ and $u_{k_i}^*\in\mathbb{R}\setminus{\{0\}}$, $i\in\mathbb{N}_{l'}$, such that $\mathbf{u}^{*}=\sum_{i\in\mathbb{N}_{l'}}u_{k_i}^*\mathbf{e}_{k_i}.$ Since $\bm{\psi}$ has the form \eqref{separable_psi}, we obtain that for each $i\in\mathbb{N}_{l'}$, $u_{k_i}^*$ is a nonzero solution of the regularization problem \eqref{optimization_1m} with $\psi:=\psi_{k_i}$ and for each $j\in\mathbb{N}_n\setminus{\{k_i:i\in\mathbb{N}_{l'}\}}$, the regularization problem \eqref{optimization_1m} with $\psi:=\psi_{j}$ has $u_j^*=0$ as its solution. Lemma \ref{choice_sparsity_1dimension_nonsmooth} ensures that $\lambda\geq\mathcal{\max}\left\{\psi'_{j,-}(0),-\psi'_{j,+}(0)\right\}$, for all $j\in\mathbb{N}_n\setminus{\{k_i:i\in\mathbb{N}_{l'}\}}.$ The above inequalities imply that inequalities \eqref{lambda_separable_nonsmooth} hold by choosing arbitrary distinct integers $k_i$, $i=l'+1,\ldots,l$ in $\mathbb{N}_n\setminus{\{k_i:i\in\mathbb{N}_{l'}\}}$. 
On the other hand, suppose that there exist distinct integers 
$k_i\in\mathbb{N}_n$, $i\in\mathbb{N}_l$, such that inequalities 
\eqref{lambda_separable_nonsmooth} hold. It suffices to find a solution for the  regularization problem \eqref{optimization_problem}, whose level of sparsity is not more than $l$. By  Lemma \ref{choice_sparsity_1dimension_nonsmooth}, inequalities 
\eqref{lambda_separable_nonsmooth} tell us that for each $j\in\mathbb{N}_n\setminus{\{k_i:i\in\mathbb{N}_{l}\}}$, the regularization problem \eqref{optimization_1m} with $\psi:=\psi_{j}$ has a zero solution. In addition, we choose for each $i\in\mathbb{N}_{l}$ a solution $u_{k_i}^*$ of the regularization problem \eqref{optimization_1m} with $\psi:=\psi_{k_i}$. Therefore, by setting $\mathbf{u}^{*}:=\sum_{i\in\mathbb{N}_{l}}u_{k_i}^*\mathbf{e}_{k_i}$, we obtain a desired solution for the regularization problem \eqref{optimization_problem}. 

If $\psi_j$, $j\in\mathbb{N}_n$, are differentiable, then there hold for all $j\in\mathbb{N}_n$ that  $\psi_{j,-}'(0)=\psi_{j,+}'(0)=\psi'(0)$. Substituting these equations into inequalities \eqref{lambda_separable_nonsmooth} leads to the desired inequalities \eqref{lambda_separable_smooth}.
\end{proof}

Theorem \ref{choice_sparsity_separable_nonsmooth} reveals the relation between sparsity of a solution of the regularization problem \eqref{optimization_problem} and a choice of the regularization parameter, when the fidelity term $\bm{\psi}$ being additively separable. The choice of the regularization parameter depends on the left and right derivatives or the derivatives of the sequence $\psi_j$, $j\in\mathbb{N}_n$, of univariate functions.

We specialize Theorem \ref{choice_sparsity_separable_nonsmooth} to the Lasso regularized model \eqref{lasso} with $p=n$ and the matrix $\mathbf{A}$ being the identity matrix of order $n$, that is,  
\begin{equation}\label{Lasso_A_I}
\min\left\{\frac{1}{2}\|\mathbf{u-x}\|_2^2+\lambda\|\mathbf{u}\|_1:\mathbf{u}\in\mathbb{R}^n\right\}.
\end{equation}
In this case, the fidelity term $\bm{\psi}$ defined by 
$\bm{\psi}(\mathbf{u})
:=\frac{1}{2}\|\mathbf{u-x}\|_2^2$, $\mathbf{u}\in\mathbb{R}^n,$ has the form \eqref{separable_psi} with 
$\psi_j(u):=\frac{1}{2}(u-x_j)^2$, $u\in\mathbb{R}$, $j\in\mathbb{N}_n$, and is differentiable.

\begin{corollary}\label{separable_smooth_example}
Suppose that $\mathbf{x}:=[x_j:j\in\mathbb{N}_n]$ is a given vector in $\mathbb{R}^n$. Then the regularization problem \eqref{Lasso_A_I} with $\lambda>0$ has a unique solution having sparsity of level $l'$ with $l'\leq l$ for some $l\in\mathbb{Z}_{n+1}$ if and only if there exist distinct 
$k_i\in\mathbb{N}_n$, $i\in\mathbb{N}_l$, such that 
$\lambda\geq|x_j|$ for all $j\in\mathbb{N}_n\setminus{\{k_i:i\in\mathbb{N}_l\}}.$ 
\end{corollary}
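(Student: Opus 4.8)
## Proof Proposal for Corollary~\ref{separable_smooth_example}

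The plan is to apply Theorem~\ref{choice_sparsity_separable_nonsmooth} directly, after verifying that its hypotheses hold, and then to supplement the resulting characterization with a uniqueness argument that the theorem itself does not provide. First I would observe that the fidelity term $\bm{\psi}(\mathbf{u}) = \frac{1}{2}\|\mathbf{u}-\mathbf{x}\|_2^2$ is additively separable in the sense of \eqref{separable_psi}, with the univariate pieces $\psi_j(u) := \frac{1}{2}(u-x_j)^2$ for $j\in\mathbb{N}_n$, and that each $\psi_j$ is differentiable on $\mathbb{R}$ with $\psi_j'(0) = -x_j$. Thus $|\psi_j'(0)| = |x_j|$, and the differentiable case of Theorem~\ref{choice_sparsity_separable_nonsmooth}, namely condition \eqref{lambda_separable_smooth}, applies verbatim: the regularization problem \eqref{Lasso_A_I} has a solution of sparsity level $l'$ with $l'\le l$ if and only if there exist distinct $k_i\in\mathbb{N}_n$, $i\in\mathbb{N}_l$, such that $\lambda\ge|x_j|$ for all $j\in\mathbb{N}_n\setminus\{k_i:i\in\mathbb{N}_l\}$. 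This gives the stated equivalence modulo the word ``unique''.

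The remaining point, and the one genuinely requiring its own argument, is uniqueness of the solution of \eqref{Lasso_A_I}. Here I would exploit strict convexity: the objective function $\mathbf{u}\mapsto\frac{1}{2}\|\mathbf{u}-\mathbf{x}\|_2^2 + \lambda\|\mathbf{u}\|_1$ is the sum of a strictly convex function and a convex function, hence strictly convex on $\mathbb{R}^n$, and therefore admits at most one minimizer. Combined with the fact that a minimizer exists (the objective is coercive and continuous), the solution is unique. Equivalently, and perhaps cleaner for a self-contained treatment, I would note that by additive separability the $j$-th component of any solution is the unique minimizer of $u\mapsto\frac{1}{2}(u-x_j)^2 + \lambda|u|$ over $\mathbb{R}$ — this is the scalar soft-thresholding problem, whose objective is strictly convex in $u$ and hence has a unique solution, namely $\operatorname{sign}(x_j)\max\{|x_j|-\lambda,0\}$. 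Either route establishes that ``a solution'' may be replaced by ``the unique solution'' throughout the statement.

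I do not anticipate a serious obstacle here; the corollary is essentially a specialization, and the only content beyond invoking Theorem~\ref{choice_sparsity_separable_nonsmooth} is the strict-convexity observation for uniqueness. If anything, the mild subtlety is purely expository: one should make clear that the level-$l'$ conclusion of the theorem produces \emph{a} solution of that sparsity, while uniqueness then forces \emph{the} solution to have exactly that sparsity structure — namely, with $\lambda\ge|x_j|$, the $j$-th component is driven to zero, so the solution's support is contained in $\{k_i:i\in\mathbb{N}_l\}$, giving sparsity level $l'\le l$. I would close by remarking that in fact the unique solution's support is precisely $\{j\in\mathbb{N}_n: |x_j|>\lambda\}$, which makes the threshold interpretation transparent, though this sharper statement is not needed for the corollary as phrased.
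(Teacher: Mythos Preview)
Your proposal is correct and follows essentially the same approach as the paper: the paper likewise notes that the objective is strictly convex (hence the solution is unique), computes $\psi_j'(0)=-x_j$, and substitutes into condition \eqref{lambda_separable_smooth} of Theorem~\ref{choice_sparsity_separable_nonsmooth}. Your additional remarks on the explicit soft-thresholding formula and the exact support $\{j:|x_j|>\lambda\}$ are correct but go beyond what the paper records.
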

\begin{proof}
Since the objective function is strictly convex, the regularization problem \eqref{Lasso_A_I} has a unique solution. Note that $\psi_j'(0)=-x_j$, for all $j\in\mathbb{N}_n$. Substituting these equations into  \eqref{lambda_separable_smooth} of  Theorem \ref{choice_sparsity_separable_nonsmooth} leads to the desired result of this corollary. 
\end{proof}

We next characterize the sparsity of a solution of the regularization problem \eqref{optimization_problem} for the case when the fidelity term $\bm{\psi}$ is block separable. For this purpose, we introduce the definition of the block separable function. Let  $\mathcal{S}:=\left\{S_{1}, S_2, \ldots, S_{d}\right\}$ be a partition of  $\mathbb{N}_n$, where for each $j\in \mathbb{N}_d$, $S_j$ is an ordered set following the natural order of elements in $\mathbb{N}_n$ and has cardinality $n_j$. Specifically, we assume $S_j:=\{i(j)_1, i(j)_2, \dots, i(j)_{n_j}\}$, with $i(j)_l\in \mathbb{N}_n$, $l\in \mathbb{N}_{n_j}$ and $i(j)_1<i(j)_2<\dots<i(j)_{n_j}$.
For each $\mathbf{u}\in\mathbb{R}^n$, set $\mathbf{u}_j:=[{u}_{{i(j)}_1},{u}_{{i(j)}_2}, \dots, {u}_{{i(j)}_{n_j}}]$ for all $j\in \mathbb{N}_{d}$.
A function  $\bm{\psi}:\mathbb{R}^n\rightarrow\mathbb{R}$ is called $\mathcal{S}$-block separable if there exist functions $\bm{\psi}_j:\mathbb{R}^{n_j}\rightarrow\mathbb{R}$,  $j\in\mathbb{N}_d$, such that 
\begin{equation}\label{block_separable_psi}
    \bm{\psi}(\mathbf{u})=\sum\limits_{j\in\mathbb{N}_d}\bm{\psi}_{j}(\mathbf{u}_{j}),\ \mbox{for all}\  \mathbf{u}\in\mathbb{R}^n.
\end{equation}
It is clear that an additively separable function $\bm{\psi}$ with the form \eqref{separable_psi} is $\mathcal{S}$-block separable with $\mathcal{S}$ being the nature partition of $\mathbb{N}_n$. That is, $S_j:=\{j\}$. A high-dimensional optimization problem having a block separable objective function can be reduced to several disjoint optimization problems with lower dimensionalities. 

By virtue of the block separability of $\bm{\psi}$ and the additive separability of the norm function $\|\cdot\|_1$, the regularization problem \eqref{optimization_problem} can be reduced to the following lower dimensional regularization problems 
\begin{equation}\label{subproblem_block_separable}
    \min\left\{\bm{\psi}_j(\mathbf{u}_j)+\lambda\|\mathbf{u}_j\|_1:\mathbf{u}_j\in\mathbb{R}^{n_j}\right\},\ j\in\mathbb{N}_d.
\end{equation}
To discuss the sparsity of the solutions of the regularization problem \eqref{optimization_problem} with $\bm{\psi}$ being block separable, we also need to define the level of block sparsity for a vector in $\mathbb{R}^n$. Let $\mathcal{S}:=\left\{S_{1}, S_2, \ldots, S_{d}\right\}$ be a partition of the index set $\mathbb{N}_n.$ We say that a vector $\mathbf{x}\in\mathbb{R}^n$ has $\mathcal{S}$-block sparsity of level $l\in\mathbb{Z}_{d+1}$ if it has exactly $l$ number of nonzero sub-vectors. 

We give in the following lemma a choice of the parameter for the case when the regularization problem \eqref{optimization_problem} has a most sparse solution without assuming $\bm{\psi}$ being block separable.

\begin{lemma}\label{choice_sparsity_ndimension_nonsmooth}
Suppose that $\bm{\psi}$ is a convex function on $\mathbb{R}^{n}$. Then 
the regularization problem \eqref{optimization_problem} with $\lambda>0$ has a solution $\mathbf{u}^*=\mathbf{0}$  if and only if 
\begin{equation}\label{lambda_nonsmooth}
\lambda\geq\mathrm{min}\left\{\|\mathbf{y}\|_{\infty}:\mathbf{y}\in\partial\bm{\psi}(\mathbf{0}) \right\}.
\end{equation}
\end{lemma}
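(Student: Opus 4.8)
The plan is to use the Fermat rule for the convex objective function in \eqref{optimization_problem} together with the sum rule for subdifferentials, just as in the proof of Lemma \ref{choice_sparsity_1dimension_nonsmooth}, but now in $\mathbb{R}^n$. First I would note that since $\|\cdot\|_1$ is continuous on $\mathbb{R}^n$, the sum rule $\partial(\bm\psi+\lambda\|\cdot\|_1)(\mathbf{0})=\partial\bm\psi(\mathbf{0})+\lambda\,\partial\|\cdot\|_1(\mathbf{0})$ holds, and therefore $\mathbf{u}^*=\mathbf{0}$ is a solution of \eqref{optimization_problem} if and only if $\mathbf{0}\in\partial\bm\psi(\mathbf{0})+\lambda\,\partial\|\cdot\|_1(\mathbf{0})$.

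The next step is to identify $\partial\|\cdot\|_1(\mathbf{0})$. Since the $\ell_1$ norm is the dual norm of the $\ell_\infty$ norm, its subdifferential at the origin is the closed unit ball of the $\ell_\infty$ norm, that is, $\partial\|\cdot\|_1(\mathbf{0})=\{\mathbf{z}\in\mathbb{R}^n:\|\mathbf{z}\|_\infty\le 1\}$. (Alternatively this follows coordinatewise from \eqref{abs_subbiff}.) Substituting this into the inclusion, $\mathbf{0}\in\partial\bm\psi(\mathbf{0})+\lambda\,\partial\|\cdot\|_1(\mathbf{0})$ becomes: there exists $\mathbf{y}\in\partial\bm\psi(\mathbf{0})$ such that $-\mathbf{y}\in\lambda\{\mathbf{z}:\|\mathbf{z}\|_\infty\le1\}$, i.e. $\|\mathbf{y}\|_\infty\le\lambda$. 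Thus $\mathbf{0}$ is a solution if and only if there exists $\mathbf{y}\in\partial\bm\psi(\mathbf{0})$ with $\|\mathbf{y}\|_\infty\le\lambda$, which is precisely the statement that $\lambda\ge\inf\{\|\mathbf{y}\|_\infty:\mathbf{y}\in\partial\bm\psi(\mathbf{0})\}$.

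The only remaining point is to justify that the infimum in \eqref{lambda_nonsmooth} is attained, so that it can be written as a minimum and so that the inequality $\lambda\ge\min\{\cdots\}$ genuinely guarantees the existence of a witness $\mathbf{y}$. This follows because $\bm\psi$ is a real-valued (hence finite and continuous) convex function on all of $\mathbb{R}^n$, so $\partial\bm\psi(\mathbf{0})$ is a nonempty, convex, and compact subset of $\mathbb{R}^n$, and $\mathbf{y}\mapsto\|\mathbf{y}\|_\infty$ is continuous; a continuous function on a compact set attains its minimum. I expect this compactness argument, together with checking that "$\lambda\ge\min$" transfers correctly to "there exists $\mathbf{y}\in\partial\bm\psi(\mathbf{0})$ with $\|\mathbf{y}\|_\infty\le\lambda$" (using that the min is attained), to be the only mildly delicate step; the rest is a direct transcription of the one-dimensional argument of Lemma \ref{choice_sparsity_1dimension_nonsmooth} to the $\ell_1$/$\ell_\infty$ duality in $\mathbb{R}^n$.
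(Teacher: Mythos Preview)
Your proof is correct and follows essentially the same route as the paper: Fermat rule plus the subdifferential sum rule, identification of $\partial\|\cdot\|_1(\mathbf{0})$ with the $\ell_\infty$ unit ball, and the equivalence with the existence of some $\mathbf{y}\in\partial\bm\psi(\mathbf{0})$ satisfying $\|\mathbf{y}\|_\infty\le\lambda$. Your added compactness argument justifying that the infimum is attained is a nice touch that the paper glosses over.
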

\begin{proof}
According to the Fermat rule, vector $\mathbf{u}^*=\mathbf{0}$ is a solution of the regularization problem \eqref{optimization_problem} if and only if
$\mathbf{0}\in  \partial(\bm{\psi}+\lambda\|\cdot\|_1)(\mathbf{0}),$
which guaranteed by the continuity of the $\ell_1$ norm function is equivalent to $\mathbf{0}\in  \partial\bm{\psi}(\mathbf{0})+\lambda \partial\|\cdot\|_1(\mathbf{0}).$ That is to say there exists $\mathbf{y}\in\partial\bm{\psi}(\mathbf{0})$ such that $-\mathbf{y}\in\lambda \partial\|\cdot\|_1(\mathbf{0}).$ The subdifferential of the $\ell_1$ norm function at zero can be represented by 
\begin{equation}\label{abs_nm_subdiff}
\partial\|\cdot\|_1(\mathbf{0})
=\left\{\mathbf{y}\in\mathbb{R}^n: |y_j|\leq 1, j\in\mathbb{N}_n\right\}. 
\end{equation}
By employing equation \eqref{abs_nm_subdiff}, we rewrite the inclusion relation $-\mathbf{y}\in\lambda \partial\|\cdot\|_1(\mathbf{0})$ as $\lambda\geq\|\mathbf{y}\|_{\infty}$. We then conclude that $\mathbf{u}^*=\mathbf{0}$ is a solution of  \eqref{optimization_problem} if and only if there exists $\mathbf{y}\in\partial\bm{\psi}(\mathbf{0})$ such that $\lambda\geq\|\mathbf{y}\|_{\infty}$. It is clear that the latter is equivalent to   \eqref{lambda_nonsmooth}.
\end{proof}

With the help of Lemma \ref{choice_sparsity_ndimension_nonsmooth}, we present a parameter choice so that a solution of the regularization problem \eqref{optimization_problem}, with the fidelity term $\bm{\psi}$ being in the form \eqref{block_separable_psi}, has block sparsity of a prescribed level. In the following discussions, we alwayse suppose that $\mathcal{S}:=\left\{S_{1},S_{2},\ldots, S_{d}\right\}$ is a partition of the index set $\mathbb{N}_n$ and $n_j$ is the cardinality of $S_j$ for all $j\in\mathbb{N}_d$.

\begin{theorem}\label{choice_sparsity_block_nonsmooth}
Suppose that $\bm{\psi}_j$, $j\in\mathbb{N}_d$, are convex functions on $\mathbb{R}^{n_j}$ and $\bm{\psi}$ is an $\mathcal{S}$-block separable function having the form \eqref{block_separable_psi}. Then the regularization problem \eqref{optimization_problem} with $\lambda>0$ has a solution having the $\mathcal{S}$-block sparsity of level $l'$ with $l'\leq l$ for some $l\in\mathbb{Z}_{d+1}$ if and only if there exist distinct 
$k_i\in\mathbb{N}_d$, $i\in\mathbb{N}_l$, such that
\begin{equation}\label{lambda_block_nonsmooth}
\lambda\geq\mathrm{min}\left\{\|\mathbf{y}\|_{\infty}:\mathbf{y}\in\partial\bm{\psi}_{j}(\mathbf{0}) \right\}, \ \mbox{for all}\ j\in\mathbb{N}_d\setminus{\{k_i:i\in\mathbb{N}_l\}}.  
\end{equation}
In particular, if $\bm{\psi}_j$, $j\in\mathbb{N}_d$, are differentiable, then 
condition \eqref{lambda_block_nonsmooth} is equivalent to 
\begin{equation}\label{lambda_block_smooth}
    \lambda\geq\|\nabla\bm{\psi}_{j}(\mathbf{0})\|_\infty,\ \ \mbox{for all}\ \ j\in\mathbb{N}_d\setminus{\{k_i:i\in\mathbb{N}_l\}}.
\end{equation} 
\end{theorem}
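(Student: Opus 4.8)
The plan is to mirror exactly the structure of the proof of Theorem~\ref{choice_sparsity_separable_nonsmooth}, replacing the scalar reduction (via additive separability) by a block reduction (via $\mathcal{S}$-block separability), and replacing Lemma~\ref{choice_sparsity_1dimension_nonsmooth} by Lemma~\ref{choice_sparsity_ndimension_nonsmooth} applied to each block. The starting observation is that, because $\bm{\psi}$ has the form \eqref{block_separable_psi} and the $\ell_1$ norm splits as $\|\mathbf{u}\|_1=\sum_{j\in\mathbb{N}_d}\|\mathbf{u}_j\|_1$, the objective function of \eqref{optimization_problem} is itself $\mathcal{S}$-block separable; hence $\mathbf{u}^*$ is a solution of \eqref{optimization_problem} if and only if for every $j\in\mathbb{N}_d$ the sub-vector $\mathbf{u}^*_j$ solves the lower-dimensional problem \eqref{subproblem_block_separable}. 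Moreover $\mathbf{u}^*$ has $\mathcal{S}$-block sparsity of level $l'$ precisely when exactly $l'$ of the $\mathbf{u}^*_j$ are nonzero.

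The two directions then go as follows. For necessity: if \eqref{optimization_problem} has a solution $\mathbf{u}^*$ with $\mathcal{S}$-block sparsity of level $l'\le l$, pick the indices $k_1,\dots,k_{l'}\in\mathbb{N}_d$ of its nonzero blocks; for every $j\in\mathbb{N}_d\setminus\{k_i:i\in\mathbb{N}_{l'}\}$ we have $\mathbf{u}^*_j=\mathbf{0}$ solving \eqref{subproblem_block_separable}, so Lemma~\ref{choice_sparsity_ndimension_nonsmooth} (applied with $\bm{\psi}:=\bm{\psi}_j$, $n:=n_j$) yields $\lambda\ge\min\{\|\mathbf{y}\|_\infty:\mathbf{y}\in\partial\bm{\psi}_j(\mathbf{0})\}$; then append arbitrary distinct indices $k_{l'+1},\dots,k_l$ from the complement to obtain \eqref{lambda_block_nonsmooth}. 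For sufficiency: given distinct $k_i\in\mathbb{N}_d$, $i\in\mathbb{N}_l$, satisfying \eqref{lambda_block_nonsmooth}, Lemma~\ref{choice_sparsity_ndimension_nonsmooth} guarantees that for each $j\notin\{k_i:i\in\mathbb{N}_l\}$ the problem \eqref{subproblem_block_separable} has the zero solution $\mathbf{u}^*_j=\mathbf{0}$; for each $i\in\mathbb{N}_l$ choose any solution $\mathbf{u}^*_{k_i}$ of the corresponding sub-problem (a minimizer exists since $\bm{\psi}_{k_i}+\lambda\|\cdot\|_1$ is convex, bounded below, and coercive in $\|\mathbf{u}_{k_i}\|_1$); assembling these sub-vectors produces a solution of \eqref{optimization_problem} with at most $l$ nonzero blocks, i.e. $\mathcal{S}$-block sparsity of level $l'\le l$. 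The case $l=0$ is handled uniformly: it is exactly the statement that $\mathbf{u}^*=\mathbf{0}$ solves each sub-problem, which again is Lemma~\ref{choice_sparsity_ndimension_nonsmooth} block by block.

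Finally, for the differentiable case: when $\bm{\psi}_j$ is differentiable on $\mathbb{R}^{n_j}$ one has $\partial\bm{\psi}_j(\mathbf{0})=\{\nabla\bm{\psi}_j(\mathbf{0})\}$, a singleton, so $\min\{\|\mathbf{y}\|_\infty:\mathbf{y}\in\partial\bm{\psi}_j(\mathbf{0})\}=\|\nabla\bm{\psi}_j(\mathbf{0})\|_\infty$, and \eqref{lambda_block_nonsmooth} collapses to \eqref{lambda_block_smooth}. The only genuinely delicate point, and the one I would be most careful about, is the reduction step itself: one must verify that $\mathcal{S}$-block separability of $\bm{\psi}$ plus the split $\|\mathbf{u}\|_1=\sum_j\|\mathbf{u}_j\|_1$ really does let one minimize block by block, i.e. that there is no coupling between the blocks in either the objective or the feasible set $\mathbb{R}^n=\prod_j\mathbb{R}^{n_j}$ (this is where the hypothesis that $\mathcal{S}$ is a \emph{partition}, so the blocks are disjoint and exhaust $\mathbb{N}_n$, is used). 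Everything else is a routine transcription of the proof of Theorem~\ref{choice_sparsity_separable_nonsmooth} with ``component'' replaced by ``block'' and Lemma~\ref{choice_sparsity_1dimension_nonsmooth} replaced by Lemma~\ref{choice_sparsity_ndimension_nonsmooth}.
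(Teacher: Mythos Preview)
Your proposal is correct and follows essentially the same approach as the paper's own proof: both reduce \eqref{optimization_problem} to the block subproblems \eqref{subproblem_block_separable} via $\mathcal{S}$-block separability, apply Lemma~\ref{choice_sparsity_ndimension_nonsmooth} to each block to characterize when $\mathbf{u}_j^*=\mathbf{0}$ is a solution, handle the two directions and the case $l=0$ exactly as you describe, and finish the differentiable case by observing that the subdifferential is a singleton. Your added remark on existence of minimizers for the subproblems (via coercivity of $\lambda\|\cdot\|_1$ and boundedness below of $\bm{\psi}_{k_i}$, which follows from $\bm{\psi}\ge 0$) is a detail the paper leaves implicit.
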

\begin{proof}
If $l=0$, we shall give a choice of parameter $\lambda$ so that the regularization problem \eqref{optimization_problem} has the trival solution $\mathbf{u}^*=\mathbf{0}$.
Note that $\mathbf{u}^*=\mathbf{0}$ is a solution of \eqref{optimization_problem} if and only if 
for each $j\in\mathbb{N}_{d}$, $\mathbf{u}_j^*=\mathbf{0}$ is a solution of the regularization problem
\eqref{subproblem_block_separable}. Lemma \ref{choice_sparsity_ndimension_nonsmooth} ensures that the latter holds if and only if $\lambda\geq\mathrm{min}\left\{\|\mathbf{y}\|_{\infty}:\mathbf{y}\in\partial\bm{\psi}_{j}(\mathbf{0}) \right\}$ for all $j\in\mathbb{N}_d$, which coincides with the inequalities \eqref{lambda_block_nonsmooth} with $l=0$.

We next prove this theorem for the case that $l\neq0$. We suppose that $\mathbf{u}^*$ as a solution of the regularization problem \eqref{optimization_problem} has the $\mathcal{S}$-block sparsity of level $l'$ with $l'\leq l$ and $\mathbf{u}^*_{j},$ $j\in\mathbb{N}_d$, are its sub-vectors. That is, there exist distinct integers
$k_i$, $i\in\mathbb{N}_{l'}$, in $\mathbb{N}_d$ such that $\mathbf{u}^{*}_{j}=\mathbf{0}$ for all $j\in\mathbb{N}_d\setminus{\{k_i:i\in\mathbb{N}_{l'}\}}$. Hence, the regularization problem \eqref{subproblem_block_separable} with $j\in\mathbb{N}_d\setminus{\{k_i:i\in\mathbb{N}_{l'}\}}$ has the trival solution $\mathbf{u}^{*}_{j}=\mathbf{0}$. Again by employing Lemma \ref{choice_sparsity_ndimension_nonsmooth}, we obtain that
$\lambda\geq\mathrm{min}\left\{\|\mathbf{y}\|_{\infty}:\mathbf{y}\in\partial\bm{\psi}_{j}(\mathbf{0}) \right\},$ for all $j\in\mathbb{N}_d\setminus{\{k_i:i\in\mathbb{N}_{l'}\}}.$ To prove the desired conclusion, we choose distinct integers $k_i$, $i=l'+1,\ldots,l$ in $\mathbb{N}_d\setminus{\{k_i:i\in\mathbb{N}_{l'}\}}$. Then the inequalities  \eqref{lambda_block_nonsmooth} follow directly from the above inequalities.

Conversely, we suppose that there exist distinct integers 
$k_i$, $i\in\mathbb{N}_l$, in $\mathbb{N}_d$ such that the inequalities 
\eqref{lambda_block_nonsmooth} hold. It follows from Lemma \ref{choice_sparsity_ndimension_nonsmooth} that for each $j\in\mathbb{N}_d\setminus{\{k_i:i\in\mathbb{N}_{l}\}}$, $\mathbf{u}_j^*=\mathbf{0}$ is a solution of the  regularization problem \eqref{subproblem_block_separable}. To construct a solution of the regularization problem \eqref{optimization_problem}, we choose for each $i\in\mathbb{N}_{l}$ a solution $\mathbf{u}_{k_i}^*$ of the regularization problem \eqref{subproblem_block_separable} with $j:=k_i$. Let $\mathbf{u}^{*}$ be the vector in $\mathbb{R}^n$ with the sub-vectors $\mathbf{u}_j$, $j\in\mathbb{N}_d,$ being defined above. It is obvious that $\mathbf{u}^{*}$ is a solution for the regularization problem \eqref{optimization_problem} and its level of $\mathcal{S}$-block sparsity is not more than $l$. 

If  $\bm{\psi}_j$, $j\in\mathbb{N}_d$, are differentiable, then their subdifferential at zero are the singleton $\nabla\bm{\psi}_{j}(\mathbf{0})$. This together with inequalities \eqref{lambda_block_nonsmooth} leads to the desired inequalities \eqref{lambda_block_smooth}.
\end{proof}

In the following, we consider the Lasso regularized model \eqref{lasso} and discuss when the fidelity term $\bm{\psi}$ defined by \eqref{square_psi_u} is block separable. Throughout this paper, we denote the $j$-th column of a matrix $\mathbf{M}\in\mathbb{R}^{m\times n}$ by $\mathbf{M}_j$. Suppose that $\mathcal{S}:=\left\{S_{1},S_{2},\ldots, S_{d}\right\}$ is a partition of the set $\mathbb{N}_n$ and $n_j$ is the cardinality of $S_j$ for all $j\in\mathbb{N}_d$. Associated with the partition $\mathcal{S}$, we decompose a matrix $\mathbf{M}\in\mathbb{R}^{m\times n}$ into $d$ sub-matrices by setting $\mathbf{M}_{(j)}:=[\mathbf{M}_k:k\in S_{j}]\in\mathbb{R}^{m\times n_j}$ for all $j\in\mathbb{N}_d$. The next lemma provides a sufficient and necessary condition ensuring the block separability of the fidelity term $\bm{\psi}$ defined by \eqref{square_psi_u}.
\begin{lemma}\label{Lasso_block_separable}
Suppose that   $\mathbf{x}\in\mathbb{R}^p$ and $\mathbf{A}\in\mathbb{R}^{p\times n}$. Then the function $\bm{\psi}$ defined by \eqref{square_psi_u} is $\mathcal{S}$-block separable if and only if there holds \begin{equation}\label{suff_nece_Lasso_block_separable}
(\mathbf{A}_{(j)})^{\top}\mathbf{A}_{(k)}=\mathbf{0},\ \mbox{for all}\ j,k\in\mathbb{N}_d \ \mbox{and}\ j\neq{k}.
\end{equation}
\end{lemma}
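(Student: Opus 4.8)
The plan is to expand the quadratic fidelity term along the block decomposition induced by $\mathcal{S}$ and to isolate its cross-block part. Writing $\mathbf{A}\mathbf{u}=\sum_{j\in\mathbb{N}_d}\mathbf{A}_{(j)}\mathbf{u}_j$, expanding the square, and using that each scalar $\mathbf{u}_j^\top(\mathbf{A}_{(j)})^\top\mathbf{A}_{(k)}\mathbf{u}_k$ equals its own transpose to pair the $(j,k)$ and $(k,j)$ terms, one obtains
\[
\bm{\psi}(\mathbf{u})=\frac12\sum_{j\in\mathbb{N}_d}\bigl(\|\mathbf{A}_{(j)}\mathbf{u}_j\|_2^2-2\mathbf{x}^\top\mathbf{A}_{(j)}\mathbf{u}_j\bigr)+\frac12\|\mathbf{x}\|_2^2+\sum_{1\le j<k\le d}\mathbf{u}_j^\top(\mathbf{A}_{(j)})^\top\mathbf{A}_{(k)}\mathbf{u}_k .
\]
Denote by $g(\mathbf{u})$ the first two groups of terms, which is patently $\mathcal{S}$-block separable (the constant $\frac12\|\mathbf{x}\|_2^2$ may be absorbed into any one block), and by $h(\mathbf{u})$ the last double sum, so that $\bm{\psi}=g+h$.

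Sufficiency is then immediate: if \eqref{suff_nece_Lasso_block_separable} holds, every summand of $h$ vanishes, so $\bm{\psi}=g$ is $\mathcal{S}$-block separable. For necessity, suppose $\bm{\psi}$ is $\mathcal{S}$-block separable; then $h=\bm{\psi}-g$ is a difference of $\mathcal{S}$-block separable functions, hence $\mathcal{S}$-block separable, say $h(\mathbf{u})=\sum_{j\in\mathbb{N}_d}h_j(\mathbf{u}_j)$. Fix distinct $j_0,k_0\in\mathbb{N}_d$ and restrict $h$ to the subspace on which $\mathbf{u}_j=\mathbf{0}$ for all $j\notin\{j_0,k_0\}$. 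On this subspace $h$ reduces, on the one hand, to the single term $\mathbf{u}_{j_0}^\top(\mathbf{A}_{(j_0)})^\top\mathbf{A}_{(k_0)}\mathbf{u}_{k_0}$ (every other summand of $h$ carries a zeroed block), and on the other hand to $h_{j_0}(\mathbf{u}_{j_0})+h_{k_0}(\mathbf{u}_{k_0})+c$ with $c:=\sum_{j\notin\{j_0,k_0\}}h_j(\mathbf{0})$. Putting $\mathbf{u}_{k_0}=\mathbf{0}$ makes the first expression vanish for every $\mathbf{u}_{j_0}$, forcing $h_{j_0}$ to be the constant $-h_{k_0}(\mathbf{0})-c$; symmetrically $h_{k_0}$ is constant. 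Hence $h$ is constant on this subspace, and evaluating at the origin shows the constant is $0$. Thus $\mathbf{u}_{j_0}^\top(\mathbf{A}_{(j_0)})^\top\mathbf{A}_{(k_0)}\mathbf{u}_{k_0}=0$ for all $\mathbf{u}_{j_0}\in\mathbb{R}^{n_{j_0}}$ and $\mathbf{u}_{k_0}\in\mathbb{R}^{n_{k_0}}$; letting these range over the canonical bases gives $(\mathbf{A}_{(j_0)})^\top\mathbf{A}_{(k_0)}=\mathbf{0}$, and since $j_0,k_0$ were arbitrary distinct indices, \eqref{suff_nece_Lasso_block_separable} follows.

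The one genuinely delicate point is the last step: converting ``$h$ restricted to a two-block slice is additively separable across those two blocks up to an additive constant'' into ``the associated bilinear form vanishes identically.'' Additive separability alone is weak; the argument works only because the slice value additionally vanishes whenever either of the two blocks is set to zero, which is exactly what collapses both separable pieces to constants. Everything else --- the expansion, the sufficiency direction, and passing from a vanishing bilinear form to a vanishing matrix --- is routine. As an alternative one could argue through Hessians: $\bm{\psi}$ is a degree-two polynomial with constant Hessian $\mathbf{A}^\top\mathbf{A}$, and $\mathcal{S}$-block separability (which forces each component $\bm{\psi}_j$ to be smooth, being the restriction of $\bm{\psi}$ to an affine subspace) is equivalent to this Hessian being block diagonal with respect to $\mathcal{S}$, i.e.\ to $(\mathbf{A}_{(j)})^\top\mathbf{A}_{(k)}=\mathbf{0}$ for $j\neq k$; I would nonetheless prefer the elementary expansion argument since it sidesteps any smoothness discussion.
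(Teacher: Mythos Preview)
Your proof is correct and follows essentially the same route as the paper: both expand $\bm{\psi}$ along the block decomposition $\mathbf{A}\mathbf{u}=\sum_j\mathbf{A}_{(j)}\mathbf{u}_j$, observe that the linear and constant parts are automatically $\mathcal{S}$-block separable, and reduce the question to whether the quadratic cross terms $\mathbf{u}_j^\top(\mathbf{A}_{(j)})^\top\mathbf{A}_{(k)}\mathbf{u}_k$ vanish. The paper simply asserts that block separability of the quadratic part is equivalent to \eqref{suff_nece_Lasso_block_separable}, whereas you supply the details of the necessity direction via the two-block restriction argument (and note the Hessian alternative); this is a welcome elaboration rather than a different method.
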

\begin{proof}
According to the definition  \eqref{square_psi_u} of $\bm{\psi}$, we have that   
\begin{equation}\label{equivalent_representation_psi}
\bm{\psi}(\mathbf{u})=\frac{1}{2}\mathbf{u}^{\top}\mathbf{A}^{\top}\mathbf{A}\mathbf{u}-\mathbf{x}^{\top}\mathbf{A}\mathbf{u}+\frac{1}{2}\mathbf{x}^{\top}\mathbf{x}, \ 
\mbox{for\ all}\ \mathbf{u}\in\mathbb{R}^n.
\end{equation}
It follows from the decomposition of $\mathbf{A}$ and that of each vector $\mathbf{u}$ in $\mathbb{R}^n$ with respect to $\mathcal{S}$ that 
$\mathbf{A}\mathbf{u}=\sum_{j\in\mathbb{N}_d}\mathbf{A}_{(j)}\mathbf{u}_j$, for all $\mathbf{u}\in\mathbb{R}^n.
$ Substituting the above equation into equation \eqref{equivalent_representation_psi}, we obtain that 
\begin{equation}\label{equivalent_representation_psi1}
\bm{\psi}(\mathbf{u})=\frac{1}{2}\sum_{j\in\mathbb{N}_d}\sum_{k\in\mathbb{N}_d}\mathbf{u}^{\top}_j(\mathbf{A}_{(j)})^{\top}\mathbf{A}_{(k)}\mathbf{u}_k-\sum_{j\in\mathbb{N}_d}\mathbf{x}^{\top}\mathbf{A}_{(j)}\mathbf{u}_j+\frac{1}{2}\mathbf{x}^{\top}\mathbf{x}, \ 
\mbox{for\ all}\ \mathbf{u}\in\mathbb{R}^n.
\end{equation}
It is obvious that the last two items in the right hand side of equation \eqref{equivalent_representation_psi1} are both $\mathcal{S}$-block separable. Hence, $\bm{\psi}$ is $\mathcal{S}$-block separable if and only if the first item in the right hand side of equation \eqref{equivalent_representation_psi1}
is $\mathcal{S}$-block separable.
The latter one is equivalent to that condition \eqref{suff_nece_Lasso_block_separable} holds.
\end{proof}

We now apply Theorem \ref{choice_sparsity_block_nonsmooth}
to the Lasso regularized model \eqref{lasso} when the matrix $\mathbf{A}$ satisfies condition \eqref{suff_nece_Lasso_block_separable}.

\begin{corollary}\label{block_smooth_example}
Suppose that $\mathbf{x}\in\mathbb{R}^p$, $\mathbf{A}\in\mathbb{R}^{p\times n}$ and condition \eqref{suff_nece_Lasso_block_separable} holds. Then the regularization problem \eqref{lasso} with $\lambda>0$ has a solution having the $\mathcal{S}$-block sparsity of level $l'$ with $l'\leq l$ for some $l\in\mathbb{Z}_{d+1}$ if and only if there exist distinct 
$k_i\in\mathbb{N}_d$, $i\in\mathbb{N}_l$, such that
\begin{equation}\label{lambda_Lasso_block}
\lambda\geq\|(\mathbf{A}_{(j)})^{\top}\mathbf{x}\|_\infty,\ \mbox{for all}\ j\in\mathbb{N}_d\setminus{\{k_i:i\in\mathbb{N}_l\}}.
\end{equation}     
\end{corollary}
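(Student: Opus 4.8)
The plan is to obtain Corollary \ref{block_smooth_example} as a direct specialization of Theorem \ref{choice_sparsity_block_nonsmooth}, using Lemma \ref{Lasso_block_separable} to supply the block structure. First I would invoke Lemma \ref{Lasso_block_separable}: since \eqref{suff_nece_Lasso_block_separable} holds, the fidelity term $\bm{\psi}$ defined by \eqref{square_psi_u} is $\mathcal{S}$-block separable. Moreover, from the identity \eqref{equivalent_representation_psi1} established in the proof of that lemma, the mixed terms $\mathbf{u}_j^{\top}(\mathbf{A}_{(j)})^{\top}\mathbf{A}_{(k)}\mathbf{u}_k$ with $j\neq k$ vanish, so one may take the block components to be
\[
\bm{\psi}_j(\mathbf{v}):=\tfrac12\|\mathbf{A}_{(j)}\mathbf{v}\|_2^2-\mathbf{x}^{\top}\mathbf{A}_{(j)}\mathbf{v},\qquad \mathbf{v}\in\mathbb{R}^{n_j},\ j\in\mathbb{N}_d,
\]
where the additive constant $\tfrac12\mathbf{x}^{\top}\mathbf{x}$ from \eqref{equivalent_representation_psi1} may be assigned to any single block without affecting either the minimizers of \eqref{subproblem_block_separable} or the gradients evaluated at $\mathbf{0}$.

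Next I would record that each $\bm{\psi}_j$ is a convex quadratic function on $\mathbb{R}^{n_j}$, hence differentiable, with $\nabla\bm{\psi}_j(\mathbf{v})=(\mathbf{A}_{(j)})^{\top}\mathbf{A}_{(j)}\mathbf{v}-(\mathbf{A}_{(j)})^{\top}\mathbf{x}$, so that $\nabla\bm{\psi}_j(\mathbf{0})=-(\mathbf{A}_{(j)})^{\top}\mathbf{x}$ and therefore $\|\nabla\bm{\psi}_j(\mathbf{0})\|_\infty=\|(\mathbf{A}_{(j)})^{\top}\mathbf{x}\|_\infty$. Since the Lasso regularized model \eqref{lasso} is exactly the regularization problem \eqref{optimization_problem} with this $\mathcal{S}$-block separable, differentiable $\bm{\psi}$, I would then apply the differentiable case of Theorem \ref{choice_sparsity_block_nonsmooth}, i.e. the equivalence with \eqref{lambda_block_smooth}: problem \eqref{lasso} has a solution of $\mathcal{S}$-block sparsity of level $l'$ with $l'\leq l$ if and only if there exist distinct $k_i\in\mathbb{N}_d$, $i\in\mathbb{N}_l$, with $\lambda\geq\|\nabla\bm{\psi}_j(\mathbf{0})\|_\infty$ for every $j\in\mathbb{N}_d\setminus\{k_i:i\in\mathbb{N}_l\}$. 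Substituting $\|\nabla\bm{\psi}_j(\mathbf{0})\|_\infty=\|(\mathbf{A}_{(j)})^{\top}\mathbf{x}\|_\infty$ yields precisely \eqref{lambda_Lasso_block}, which completes the proof.

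I do not anticipate a genuine obstacle, as the statement is a straightforward specialization; the only points meriting a line of care are checking that $\bm{\psi}$ (and hence each $\bm{\psi}_j$) is convex so that Theorem \ref{choice_sparsity_block_nonsmooth} is applicable, noting that the constant term in \eqref{equivalent_representation_psi1} is irrelevant to both minimizers and gradients at $\mathbf{0}$, and matching the notion of $\mathcal{S}$-block sparsity of the full vector $\mathbf{u}$ with the sub-vector decomposition $\mathbf{u}_j=[u_k:k\in S_j]$ underlying \eqref{subproblem_block_separable}.
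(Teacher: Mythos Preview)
Your proposal is correct and follows essentially the same route as the paper: invoke Lemma \ref{Lasso_block_separable} to obtain the $\mathcal{S}$-block separable form of $\bm{\psi}$, compute $\nabla\bm{\psi}_j(\mathbf{0})=-(\mathbf{A}_{(j)})^{\top}\mathbf{x}$, and then apply the differentiable case \eqref{lambda_block_smooth} of Theorem \ref{choice_sparsity_block_nonsmooth}. The only cosmetic difference is that the paper splits the constant $\tfrac12\mathbf{x}^{\top}\mathbf{x}$ evenly as $\tfrac{1}{2d}\mathbf{x}^{\top}\mathbf{x}$ across the blocks, whereas you note (correctly) that any allocation of the constant is immaterial to both minimizers and gradients at $\mathbf{0}$.
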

\begin{proof}
Since condition \eqref{suff_nece_Lasso_block_separable} holds, Lemma \ref{Lasso_block_separable} ensures that the fidelity term $\bm{\psi}$ involved in the regularization problem \eqref{lasso} is  $\mathcal{S}$-block separable. 
Substituting condition \eqref{suff_nece_Lasso_block_separable} into equation \eqref{equivalent_representation_psi1}, $\bm{\psi}$ can be represented in the form \eqref{block_separable_psi} with $\bm{\psi}_j$, $j\in\mathbb{N}_d$, being defined by 
\begin{equation}\label{subfunction_psi_u_j}
\bm{\psi}_j(\mathbf{u}_j):=\frac{1}{2}\|\mathbf{A}_{(j)}\mathbf{u}_j\|^2_2-\mathbf{x}^{\top}\mathbf{A}_{(j)}\mathbf{u}_j+\frac{1}{2d}\mathbf{x}^{\top}\mathbf{x}, \
\mbox{for all}\ \mathbf{u}_j\in\mathbb{R}^{n_j}\ \mbox{and all}\ j\in\mathbb{N}_d.
\end{equation}
Thus, we conclude by  Theorem \ref{choice_sparsity_block_nonsmooth} that the regularization problem \eqref{lasso} has a solution having the $\mathcal{S}$-block sparsity of level $l'$ with $l'\leq l$ for some $l\in\mathbb{Z}_{d+1}$ if and only if there exist distinct integers 
$k_i$, $i\in\mathbb{N}_l$, in $\mathbb{N}_d$ such that inequality \eqref{lambda_block_smooth} holds. Substituting 
$\nabla\bm{\psi}_{j}(\mathbf{0})
=-(\mathbf{A}_{(j)})^{\top}\mathbf{x}$ for all $j\in\mathbb{N}_d$ into inequality \eqref{lambda_block_smooth} leads directly to the desired inequality \eqref{lambda_Lasso_block}. 
\end{proof}

In signal or imaging processing, the matrix $\mathbf{A}$ involved in the regularization problem \eqref{lasso} is often chosen as an orthogonal matrix, such as an orthogonal discrete wavelet transform. We note that an orthogonal matrix is a special matrix satisfying condition \eqref{suff_nece_Lasso_block_separable} for any partition $\mathcal{S}$ of the index set $\mathbb{N}_n$. Especially, condition \eqref{suff_nece_Lasso_block_separable} holds for the nature partition  $\mathcal{S}$ of $\mathbb{N}_n$. In this case, Corollary \ref{block_smooth_example} ensures that the regularization problem \eqref{lasso} with $\lambda>0$ has a solution having the sparsity of level $l'$ with $l'\leq l$ for some $l\in\mathbb{Z}_{n+1}$ if and only if there exist distinct integers $k_i\in\mathbb{N}_n$, $i\in\mathbb{N}_l$, such that
$\lambda\geq|(\mathbf{A}_j)^{\top}\mathbf{x}|,\ \mbox{for all}\ j\in\mathbb{N}_n\setminus{\{k_i:i\in\mathbb{N}_l\}}$.

We also consider the group Lasso regularized model \eqref{group_lasso} which is designed to obtain the block sparsity of the solutions. To describe a choice of the parameter for this regularization problem, we also assume that condition  \eqref{suff_nece_Lasso_block_separable} holds. Then by Lemma 
\ref{Lasso_block_separable}, the  fidelity term in this problem is $\mathcal{S}$-block separable and has the form  \eqref{block_separable_psi} with $\bm{\psi}_j$, $j\in\mathbb{N}_d$, being defined by \eqref{subfunction_psi_u_j}. It is obvious that the regularizer of the group Lasso regularized model \eqref{group_lasso} is also $\mathcal{S}$-block separable. Therefore, it can be reduced to $d$ lower
dimensional regularization problems
\begin{equation}\label{group_lasso_sub-problem}
\min\left\{\bm{\psi}_j(\mathbf{u}_j)+\lambda\sqrt{n_j}\|\mathbf{u}_j\|_2:\mathbf{u}_j\in\mathbb{R}^{n_j}\right\}, \ j\in\mathbb{N}_d.
\end{equation}
Though characterizing the sparsity of the solutions of the regularization problem \eqref{group_lasso_sub-problem}, we obtain the following parameter choice with which the group Lasso regularized model \eqref{group_lasso} has a solution having block sparsity of a prescribed level. 
\begin{theorem}\label{block_smooth_example1}
Suppose that $\mathbf{x}\in\mathbb{R}^p$, $\mathbf{A}\in\mathbb{R}^{p\times n}$ and condition \eqref{suff_nece_Lasso_block_separable} holds. Then the regularization problem \eqref{group_lasso} with $\lambda>0$ has a solution having the $\mathcal{S}$-block sparsity of level $l'$ with $l'\leq l$ for some $l\in\mathbb{Z}_{d+1}$ if and only if there exist distinct 
$k_i\in\mathbb{N}_d$, $i\in\mathbb{N}_l$, such that 
$\lambda\geq\left\|(\mathbf{A}_{(j)})^{\top}\mathbf{x}\right\|_2/{\sqrt{n_j}}$, for all $j\in\mathbb{N}_d\setminus{\{k_i:i\in\mathbb{N}_l\}}.$
\end{theorem}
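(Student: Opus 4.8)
The plan is to mirror the argument used for Theorem~\ref{choice_sparsity_block_nonsmooth}, replacing the block $\ell_1$ regularizer there by the group regularizer $\lambda\sum_{j\in\mathbb{N}_d}\sqrt{n_j}\|\mathbf{u}_j\|_2$. First I would record that, under condition~\eqref{suff_nece_Lasso_block_separable}, Lemma~\ref{Lasso_block_separable} makes the fidelity term $\mathcal{S}$-block separable with blocks $\bm{\psi}_j$ given by \eqref{subfunction_psi_u_j}, and the group regularizer is trivially $\mathcal{S}$-block separable; hence $\mathbf{u}^*$ with sub-vectors $\mathbf{u}^*_j$ solves \eqref{group_lasso} if and only if each $\mathbf{u}^*_j$ solves the subproblem \eqref{group_lasso_sub-problem}. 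Consequently $\mathbf{u}^*$ has $\mathcal{S}$-block sparsity of level $l'$ exactly when precisely $d-l'$ of the subproblems \eqref{group_lasso_sub-problem} admit $\mathbf{0}$ as a solution. Thus the whole theorem reduces to characterizing, for a fixed $j$, when $\mathbf{u}_j^*=\mathbf{0}$ solves \eqref{group_lasso_sub-problem}.

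The key step is the following sublemma: $\mathbf{0}$ solves $\min\{\bm{\psi}_j(\mathbf{u}_j)+\lambda\sqrt{n_j}\|\mathbf{u}_j\|_2\}$ if and only if $\lambda\geq\|(\mathbf{A}_{(j)})^{\top}\mathbf{x}\|_2/\sqrt{n_j}$. I would prove it by Fermat's rule together with the subdifferential sum rule (legitimate since $\|\cdot\|_2$ is continuous): $\mathbf{0}$ is optimal if and only if $\mathbf{0}\in\partial\bm{\psi}_j(\mathbf{0})+\lambda\sqrt{n_j}\,\partial\|\cdot\|_2(\mathbf{0})$. Because $\bm{\psi}_j$ is the quadratic \eqref{subfunction_psi_u_j}, it is differentiable and $\partial\bm{\psi}_j(\mathbf{0})=\{\nabla\bm{\psi}_j(\mathbf{0})\}=\{-(\mathbf{A}_{(j)})^{\top}\mathbf{x}\}$; moreover the Euclidean-norm subdifferential at the origin is the closed unit Euclidean ball, $\partial\|\cdot\|_2(\mathbf{0})=\{\mathbf{y}\in\mathbb{R}^{n_j}:\|\mathbf{y}\|_2\leq 1\}$, which is the $\ell_2$-analogue of \eqref{abs_nm_subdiff} and follows directly from the definition of the subdifferential and the Cauchy--Schwarz inequality. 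Substituting gives $(\mathbf{A}_{(j)})^{\top}\mathbf{x}\in\{\mathbf{y}:\|\mathbf{y}\|_2\leq\lambda\sqrt{n_j}\}$, i.e. $\|(\mathbf{A}_{(j)})^{\top}\mathbf{x}\|_2\leq\lambda\sqrt{n_j}$, which is the claimed threshold.

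With the sublemma in hand, the two implications follow exactly as in the proof of Theorem~\ref{choice_sparsity_block_nonsmooth}. For the forward direction, a solution $\mathbf{u}^*$ of block sparsity $l'\leq l$ has $\mathbf{u}^*_j=\mathbf{0}$ for every $j\in\mathbb{N}_d\setminus\{k_i:i\in\mathbb{N}_{l'}\}$, so the sublemma yields $\lambda\geq\|(\mathbf{A}_{(j)})^{\top}\mathbf{x}\|_2/\sqrt{n_j}$ for those $j$; padding $\{k_i:i\in\mathbb{N}_{l'}\}$ with arbitrary distinct indices $k_{l'+1},\ldots,k_l$ (possible since $l\leq d$) preserves the inequalities. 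For the converse, given distinct $k_i$, $i\in\mathbb{N}_l$, satisfying the stated inequalities, the sublemma makes $\mathbf{u}_j^*=\mathbf{0}$ a solution of \eqref{group_lasso_sub-problem} for every $j\notin\{k_i:i\in\mathbb{N}_l\}$; choosing an arbitrary solution of \eqref{group_lasso_sub-problem} in each remaining block and assembling the sub-vectors produces a solution of \eqref{group_lasso} of block sparsity at most $l$. I do not expect a serious obstacle: the only points needing care are the block separability of the group regularizer (so the reduction to \eqref{group_lasso_sub-problem} is valid) and the computation of $\partial\|\cdot\|_2(\mathbf{0})$, both routine, the latter being the natural $\ell_2$-counterpart of the $\ell_1$ computation \eqref{abs_nm_subdiff} already used in Lemma~\ref{choice_sparsity_ndimension_nonsmooth}.
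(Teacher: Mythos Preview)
Your proposal is correct and follows essentially the same route as the paper: the paper also reduces to the subproblems \eqref{group_lasso_sub-problem} via $\mathcal{S}$-block separability, proves the same sublemma by Fermat's rule using $\nabla\bm{\psi}_j(\mathbf{0})=-(\mathbf{A}_{(j)})^{\top}\mathbf{x}$ and $\partial\|\cdot\|_2(\mathbf{0})=\{\mathbf{y}:\|\mathbf{y}\|_2\leq 1\}$, and then defers to the argument of Theorem~\ref{choice_sparsity_block_nonsmooth} for the two implications. One small wording issue: the sentence ``$\mathbf{u}^*$ has $\mathcal{S}$-block sparsity of level $l'$ exactly when precisely $d-l'$ of the subproblems admit $\mathbf{0}$ as a solution'' is not quite right (more than $d-l'$ subproblems could admit $\mathbf{0}$ even if $\mathbf{u}^*$ selects a nonzero minimizer in some of them), but your actual forward and converse arguments do not rely on this and are fine as written.
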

\begin{proof}
Condition \eqref{suff_nece_Lasso_block_separable} ensures that the fidelity term in the regularization problem \eqref{group_lasso} is $\mathcal{S}$-block separable and then $\mathbf{u}^*$ is a solution of \eqref{group_lasso} if and only if 
for each $j\in\mathbb{N}_{d}$, $\mathbf{u}_j^*$ is a solution of 
\eqref{group_lasso_sub-problem}. To characterize the sparsity of $\mathbf{u}^*$, we shall show that for each $j\in\mathbb{N}_d$, the regularization problem \eqref{group_lasso_sub-problem} has a solution  $\mathbf{u}_j^*=\mathbf{0}$ if and only if there holds
\begin{equation}\label{lambda_group_Lasso_block_sub-problem}
\lambda\geq\left\|(\mathbf{A}_{(j)})^{\top}\mathbf{x}\right\|_2/\sqrt{n_j}.
\end{equation}    
It follows from the Fermat rule and the differentiability of $\bm{\psi}_j$ that $\mathbf{u}_j^*=\mathbf{0}$ is a solution of \eqref{group_lasso_sub-problem} if and only if 
\begin{equation}\label{lambda_group_Lasso_block_sub-problem1}
\mathbf{0}\in\nabla\bm{\psi}_{j}(\mathbf{0})+\lambda\sqrt{n_j}\partial\|\cdot\|_2(\mathbf{0}).
\end{equation}
Note that there hold $\nabla\bm{\psi}_{j}(\mathbf{0})=-(\mathbf{A}_{(j)})^{\top}\mathbf{x}$ and $
\partial\|\cdot\|_2(\mathbf{0})
=\left\{\mathbf{y}\in\mathbb{R}^{n_j}:\|\mathbf{y}\|_2\leq 1\right\}.$
By employing these equations, we get that inclusion relation \eqref{lambda_group_Lasso_block_sub-problem1} is equivalent to inequality  \eqref{lambda_group_Lasso_block_sub-problem}. Consequently, we conclude that $\mathbf{u}_j^*=\mathbf{0}$ is a solution of \eqref{group_lasso_sub-problem} if and only if inequality  \eqref{lambda_group_Lasso_block_sub-problem} holds.

By arguments similar to those
used in the proof of Theorem  \ref{choice_sparsity_block_nonsmooth} and by employing inequality \eqref{lambda_group_Lasso_block_sub-problem}, we get the
desired conclusion of this corollary.
\end{proof}

Many practical applications can be modeled as in the form \eqref{optimization_problem} with
the fidelity term $\bm{\psi}$ being neither additively separable nor block separable. Here comes the theorem concerning
a sparsity characterization of the solutions of the regularization problem \eqref{optimization_problem} when $\bm{\psi}$ is a general convex on $\mathbb{R}^n$. For each $j\in\mathbb{N}_n$, we denote by $\bm{\psi}'_j$ the partial derivative of $\bm{\psi}$ with respect to the $j$-th variable.

\begin{theorem}\label{choice_sparsity_general_nonsmooth}
Suppose that $\bm{\psi}$ is a convex function on $\mathbb{R}^{n}$. Then the regularization problem \eqref{optimization_problem} with $\lambda>0$ has a solution  $\mathbf{u}^{*}=\sum_{i\in\mathbb{N}_{l}}u_{k_i}^*\mathbf{e}_{k_i}\in \Omega_l$ for some $l\in\mathbb{Z}_{n+1}$ if and only if there exists $\mathbf{y}:=[y_j:j\in\mathbb{N}_n]\in\partial\bm{\psi}
(\mathbf{u}^{*})$ such that 
\begin{equation}\label{lambda_general_nonsmooth1}
\lambda=-y_{k_i}\mathrm{sign}({u}_{k_i}^{*}), \ 
i\in\mathbb{N}_l,
\end{equation}
and
\begin{equation}\label{lambda_general_nonsmooth2}
\lambda\geq|y_j|,\ j\in \mathbb{N}_n\setminus\{k_i:i\in\mathbb{N}_l\}.
\end{equation}
In particular, if $\bm{\psi}$ is a differentiable, then conditions \eqref{lambda_general_nonsmooth1} and \eqref{lambda_general_nonsmooth2} are equivalent to 
\begin{equation}\label{lambda_general_smooth1}
\lambda=-\bm{\psi}'_{k_i}(\mathbf{u}^{*})\mathrm{sign}({u}_{k_i}^{*}), \ 
i\in \mathbb{N}_l,
\end{equation}
and
\begin{equation}\label{lambda_general_smooth2}
\lambda\geq\left|\bm{\psi}'_j(\mathbf{u}^{*})\right|, \ j\in \mathbb{N}_n\setminus\{k_i:i\in\mathbb{N}_l\}.
\end{equation}
\end{theorem}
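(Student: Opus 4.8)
The plan is to characterize, via the Fermat rule, exactly when a given vector $\mathbf{u}^{*}=\sum_{i\in\mathbb{N}_{l}}u_{k_i}^{*}\mathbf{e}_{k_i}\in\Omega_l$ minimizes the objective in \eqref{optimization_problem}, in the same spirit as the proof of Lemma~\ref{choice_sparsity_ndimension_nonsmooth} but now at a nonzero point. First I would apply the Fermat rule: $\mathbf{u}^{*}$ is a solution of \eqref{optimization_problem} if and only if $\mathbf{0}\in\partial(\bm{\psi}+\lambda\|\cdot\|_1)(\mathbf{u}^{*})$. Since $\|\cdot\|_1$ is continuous on $\mathbb{R}^n$, the sum rule for subdifferentials recalled above yields $\partial(\bm{\psi}+\lambda\|\cdot\|_1)(\mathbf{u}^{*})=\partial\bm{\psi}(\mathbf{u}^{*})+\lambda\,\partial\|\cdot\|_1(\mathbf{u}^{*})$, so the optimality condition is equivalent to the existence of $\mathbf{y}\in\partial\bm{\psi}(\mathbf{u}^{*})$ with $-\mathbf{y}\in\lambda\,\partial\|\cdot\|_1(\mathbf{u}^{*})$.

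The second step is to evaluate $\partial\|\cdot\|_1$ at the point $\mathbf{u}^{*}$. Because $\|\cdot\|_1$ is additively separable, its subdifferential decomposes coordinatewise: a vector $\mathbf{z}=[z_j:j\in\mathbb{N}_n]$ lies in $\partial\|\cdot\|_1(\mathbf{u}^{*})$ if and only if $z_j\in\partial|\cdot|(u_j^{*})$ for each $j\in\mathbb{N}_n$, where $\partial|\cdot|(t)=\{\mathrm{sign}(t)\}$ when $t\neq0$ and $\partial|\cdot|(0)=[-1,1]$ by \eqref{abs_subbiff}. Since $u_{k_i}^{*}\neq0$ for $i\in\mathbb{N}_l$ and $u_j^{*}=0$ for $j\in\mathbb{N}_n\setminus\{k_i:i\in\mathbb{N}_l\}$, this says that $\mathbf{z}\in\partial\|\cdot\|_1(\mathbf{u}^{*})$ precisely when $z_{k_i}=\mathrm{sign}(u_{k_i}^{*})$ for all $i\in\mathbb{N}_l$ and $|z_j|\le1$ for the remaining indices. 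If a self-contained argument is preferred, the coordinatewise splitting follows by testing the subgradient inequality $\|\mathbf{v}\|_1\ge\|\mathbf{u}^{*}\|_1+\langle\mathbf{z},\mathbf{v}-\mathbf{u}^{*}\rangle$ on vectors $\mathbf{v}$ that differ from $\mathbf{u}^{*}$ in one coordinate only.

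The third step is bookkeeping: inserting this description into $-\mathbf{y}\in\lambda\,\partial\|\cdot\|_1(\mathbf{u}^{*})$ gives $-y_{k_i}=\lambda\,\mathrm{sign}(u_{k_i}^{*})$ for $i\in\mathbb{N}_l$ and $|y_j|\le\lambda$ for $j\in\mathbb{N}_n\setminus\{k_i:i\in\mathbb{N}_l\}$. Multiplying the first set of identities by $\mathrm{sign}(u_{k_i}^{*})$ and using $(\mathrm{sign}(u_{k_i}^{*}))^2=1$ turns them into $\lambda=-y_{k_i}\mathrm{sign}(u_{k_i}^{*})$, i.e.\ \eqref{lambda_general_nonsmooth1}, while the second set is exactly \eqref{lambda_general_nonsmooth2}; since every implication used is an equivalence, this proves the stated characterization for a general convex $\bm{\psi}$. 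For the differentiable case, $\partial\bm{\psi}(\mathbf{u}^{*})$ is the singleton $\{\nabla\bm{\psi}(\mathbf{u}^{*})\}$, so $\mathbf{y}$ is forced to equal $\nabla\bm{\psi}(\mathbf{u}^{*})$ and each $y_j$ equals $\bm{\psi}'_j(\mathbf{u}^{*})$; substituting into \eqref{lambda_general_nonsmooth1}--\eqref{lambda_general_nonsmooth2} yields \eqref{lambda_general_smooth1}--\eqref{lambda_general_smooth2}.

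I do not expect a genuine obstacle here: the one place that needs attention is the sign manipulation in the third step, together with justifying the coordinatewise formula for $\partial\|\cdot\|_1$ at a nonzero argument (so far only $\partial\|\cdot\|_1(\mathbf{0})$ has been recorded, in \eqref{abs_nm_subdiff}). Everything else is a direct transcription of the Fermat-rule argument already used for Lemmas~\ref{choice_sparsity_1dimension_nonsmooth} and~\ref{choice_sparsity_ndimension_nonsmooth}.
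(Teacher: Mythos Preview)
Your proposal is correct and follows essentially the same route as the paper: apply the Fermat rule together with the subdifferential sum rule to obtain $-\mathbf{y}\in\lambda\,\partial\|\cdot\|_1(\mathbf{u}^{*})$ for some $\mathbf{y}\in\partial\bm{\psi}(\mathbf{u}^{*})$, compute $\partial\|\cdot\|_1(\mathbf{u}^{*})$ coordinatewise, and read off \eqref{lambda_general_nonsmooth1}--\eqref{lambda_general_nonsmooth2}, then specialize to the differentiable case. The paper's proof is slightly terser but otherwise identical in structure.
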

\begin{proof}
By using the Fermat rule and the continuity of the $\ell_1$ norm function, we conclude that $\mathbf{u}^{*}$ is a solution of the regularization problem \eqref{optimization_problem} if and only if 
$\mathbf{0}\in  \partial\bm{\psi}(\mathbf{u}^{*})+
\lambda \partial\|\cdot\|_1(\mathbf{u}^{*}).$ 
This is equivalent to that there exists $\mathbf{y}:=[y_j:j\in\mathbb{N}_n]\in\partial\bm{\psi}(\mathbf{u}^*)$ such that 
$-\mathbf{y}\in\lambda \partial\|\cdot\|_1(\mathbf{u}^*).$ Noting that $\mathbf{u}^{*}=\sum_{i\in\mathbb{N}_{l}}u_{k_i}^*\mathbf{e}_{k_i}$ with $u_{k_i}^*\in\mathbb{R}\setminus\{0\}$, $i\in\mathbb{N}_l$, we obtain that \begin{equation*}\label{subdiff-1norm-u*}
\partial\|\cdot\|_1(\mathbf{u}^*)
=\left\{\mathbf{z}\in\mathbb{R}^n: z_{k_i}=\mathrm{sign}(u_{k_i}^*),i\in\mathbb{N}_l \ \mbox{and}\ |z_j|\leq 1, j\in\mathbb{N}_n\setminus\{k_i:i\in\mathbb{N}_l\}\right\}. 
\end{equation*}
By using the above equation, we rewrite inclusion relation $-\mathbf{y}\in\lambda \partial\|\cdot\|_1(\mathbf{u}^*)$ as  \eqref{lambda_general_nonsmooth1} and \eqref{lambda_general_nonsmooth2}. 

If $\bm{\psi}$ is differentiable, 
then the subdifferential of $\bm{\psi}$ at $\mathbf{u}^{*}$ is the singleton  $\nabla\bm{\psi}(\mathbf{u}^{*})$. Substituting $y_j=\bm{\psi}'_j(\mathbf{u}^{*})$, $j\in\mathbb{N}_n$, into \eqref{lambda_general_nonsmooth1} and \eqref{lambda_general_nonsmooth2} leads directly \eqref{lambda_general_smooth1} and \eqref{lambda_general_smooth2}, respectively. 
\end{proof}

We now apply Theorem \ref{choice_sparsity_general_nonsmooth} to the Lasso regularized model \eqref{lasso}. Here, the fidelity term $\bm{\psi}$ defined by \eqref{square_psi_u} is differentiable but neither additively separable nor block separable. 

\begin{corollary}\label{general-smooth-example}
Suppose that $\mathbf{x}\in\mathbb{R}^p$ is a given vector and $\mathbf{A}\in\mathbb{R}^{p\times n}$ a prescribed matrix. Then the regularization problem \eqref{lasso} with $\lambda>0$ has a solution $\mathbf{u}^{*}=\sum_{i\in\mathbb{N}_{l}}u_{k_i}^*\mathbf{e}_{k_i}\in \Omega_l$ for some $l\in\mathbb{Z}_{n+1}$ if and only if there hold
\begin{equation}\label{lambda_Au1}
\lambda=(\mathbf{A}_{k_i})^{\top}(\mathbf{x}-\mathbf{Au^*})\mathrm{sign}({u}_{k_i}^{*}), \ i\in \mathbb{N}_l,
\end{equation}
and 
\begin{equation}\label{lambda_Au2}
\lambda\geq\left|(\mathbf{A}_j)^{\top}(\mathbf{Au^*}-\mathbf{x})\right|,  \ j\in \mathbb{N}_n\setminus\{k_i:i\in\mathbb{N}_l\}.
\end{equation}
\end{corollary}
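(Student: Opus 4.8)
The plan is to invoke Theorem \ref{choice_sparsity_general_nonsmooth} directly, applied to the specific fidelity term $\bm{\psi}$ defined by \eqref{square_psi_u}. Since the regularization problem \eqref{lasso} is exactly the instance of \eqref{optimization_problem} obtained by choosing this $\bm{\psi}$, the only thing to check before quoting the theorem is that $\bm{\psi}$ is convex and differentiable on $\mathbb{R}^n$, so that the ``in particular'' part of Theorem \ref{choice_sparsity_general_nonsmooth} — conditions \eqref{lambda_general_smooth1} and \eqref{lambda_general_smooth2} — is available. This is immediate: $\bm{\psi}(\mathbf{u})=\tfrac12\|\mathbf{Au}-\mathbf{x}\|_2^2$ is the composition of the affine map $\mathbf{u}\mapsto\mathbf{Au}-\mathbf{x}$ with the smooth convex function $\tfrac12\|\cdot\|_2^2$, hence is convex and $C^\infty$ on all of $\mathbb{R}^n$.

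Next I would compute the gradient explicitly. Differentiating the quadratic representation $\bm{\psi}(\mathbf{u})=\tfrac12\mathbf{u}^\top\mathbf{A}^\top\mathbf{A}\mathbf{u}-\mathbf{x}^\top\mathbf{A}\mathbf{u}+\tfrac12\mathbf{x}^\top\mathbf{x}$ already recorded in \eqref{equivalent_representation_psi} gives $\nabla\bm{\psi}(\mathbf{u})=\mathbf{A}^\top(\mathbf{Au}-\mathbf{x})$, so that the $j$-th partial derivative is $\bm{\psi}'_j(\mathbf{u})=(\mathbf{A}_j)^\top(\mathbf{Au}-\mathbf{x})$ for every $j\in\mathbb{N}_n$, where $\mathbf{A}_j$ is the $j$-th column of $\mathbf{A}$.

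The final step is substitution: evaluating these partial derivatives at a candidate solution $\mathbf{u}^*=\sum_{i\in\mathbb{N}_l}u_{k_i}^*\mathbf{e}_{k_i}\in\Omega_l$ and plugging into the smooth characterization of Theorem \ref{choice_sparsity_general_nonsmooth}. Condition \eqref{lambda_general_smooth1}, i.e. $\lambda=-\bm{\psi}'_{k_i}(\mathbf{u}^*)\,\mathrm{sign}(u_{k_i}^*)$, becomes $\lambda=-(\mathbf{A}_{k_i})^\top(\mathbf{Au}^*-\mathbf{x})\,\mathrm{sign}(u_{k_i}^*)=(\mathbf{A}_{k_i})^\top(\mathbf{x}-\mathbf{Au}^*)\,\mathrm{sign}(u_{k_i}^*)$, which is precisely \eqref{lambda_Au1}; and condition \eqref{lambda_general_smooth2}, i.e. $\lambda\geq|\bm{\psi}'_j(\mathbf{u}^*)|$, becomes $\lambda\geq|(\mathbf{A}_j)^\top(\mathbf{Au}^*-\mathbf{x})|$ for $j\in\mathbb{N}_n\setminus\{k_i:i\in\mathbb{N}_l\}$, which is \eqref{lambda_Au2}. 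Since Theorem \ref{choice_sparsity_general_nonsmooth} asserts the equivalence between $\mathbf{u}^*$ being a solution in $\Omega_l$ and the pair of conditions \eqref{lambda_general_smooth1}–\eqref{lambda_general_smooth2}, the corollary follows.

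I do not expect any real obstacle here; the proof is a mechanical specialization. The only points deserving a word of care are the sign bookkeeping in passing between $\mathbf{Au}^*-\mathbf{x}$ and $\mathbf{x}-\mathbf{Au}^*$ in \eqref{lambda_Au1} versus \eqref{lambda_Au2}, and the remark that differentiability of $\bm{\psi}$ here does not come with additive or block separability, so that the earlier corollaries (Corollary \ref{separable_smooth_example}, Corollary \ref{block_smooth_example}) do not cover the general matrix $\mathbf{A}$ and Theorem \ref{choice_sparsity_general_nonsmooth} is genuinely the tool needed.
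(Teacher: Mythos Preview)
Your proposal is correct and follows essentially the same route as the paper's own proof: compute $\nabla\bm{\psi}(\mathbf{u}^*)=\mathbf{A}^\top(\mathbf{Au}^*-\mathbf{x})$, then substitute the resulting partial derivatives $\bm{\psi}'_j(\mathbf{u}^*)=(\mathbf{A}_j)^\top(\mathbf{Au}^*-\mathbf{x})$ into conditions \eqref{lambda_general_smooth1} and \eqref{lambda_general_smooth2} of Theorem \ref{choice_sparsity_general_nonsmooth}. The only difference is that you spell out the convexity and differentiability of $\bm{\psi}$ explicitly, whereas the paper treats these as evident.
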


\begin{proof}
Note that the gradient of $\bm{\psi}$ at $\mathbf{u}^*$ has the form $\nabla\bm{\psi}(\mathbf{u}^*)=\mathbf{A}^\top(\mathbf{A}\mathbf{u}^*-\mathbf{x})$. That is, 
\begin{equation}\label{partial-diff}
\bm{\psi}'_j(\mathbf{u}^*)=(\mathbf{A}_j)^{\top}(\mathbf{Au^*}-\mathbf{x}),\ \mbox{for all}\ j\in\mathbb{N}_n.
\end{equation} 
Theorem \ref{choice_sparsity_general_nonsmooth} ensures that $\mathbf{u}^{*}=\sum_{i\in\mathbb{N}_{l}}u_{k_i}^*\mathbf{e}_{k_i}\in \Omega_l$ is a solution of the regularization problem \eqref{lasso} if and only if \eqref{lambda_general_smooth1} and \eqref{lambda_general_smooth2} hold. According to equation \eqref{partial-diff}, we conclude that the latter is equivalent to that \eqref{lambda_Au1} and \eqref{lambda_Au2} hold. 
\end{proof}

As a special case of Corollary \ref{general-smooth-example}, the Lasso regularized model has $\mathbf{u^*}=\mathbf{0}$ as a solution if and only if there holds
$\lambda\geq\left\|\mathbf{A}^{\top}\mathbf{x}\right\|_{\infty}.$
We remark that the results about the Lasso regularized model stated in Corollary \ref{general-smooth-example} has been established in \cite{bach2011optimization}. 

When the fidelity term $\bm{\psi}$ has no special form such as \eqref{separable_psi} and \eqref{block_separable_psi}, Theorem \ref{choice_sparsity_general_nonsmooth}  provides a characterization of the regularization parameter with which the regularization problem \eqref{optimization_problem} has a solution with sparsity of a certain level. In fact, since conditions \eqref{lambda_general_nonsmooth1} and \eqref{lambda_general_nonsmooth2} (or \eqref{lambda_general_smooth1} and \eqref{lambda_general_smooth2}) depend on the corresponding solution, the characterization stated in Theorem \ref{choice_sparsity_general_nonsmooth} can not be used as a parameter choice strategy. Nevertheless, we can still observe from the characterization that the choice of the  regularization parameter can influence the sparsity of the solution. To ensure sparsity of the solution, the regularization parameter need to satisfy equalities and inequalities, which respectively characterize the non-zero components and the zero components of the solution. As the number of the inequalities increases, the solution becomes more sparse. When the conditions only include the inequalities, the solution is the most sparse. When partial information of the solution is known,  Theorem \ref{choice_sparsity_general_nonsmooth} can provide a choice of the regularization parameter. The following result is an attempt along this direction.

\begin{proposition}\label{Sufficient-Condition}
Suppose that $\bm{\psi}$ is a differentiable and convex function on $\mathbb{R}^n$ and for each $i\in\mathbb{N}_n$, $\bm{\psi}_i'$ is $L_i$-Lipschitz continuous on $\mathbb{R}^n$. Let $\mathbf{u}^*\in\mathbb{R}^n$ be a solution of the regularization problem \eqref{optimization_problem} with $\lambda>0$ and $\mathbf{v}\in\mathbb{R}^n$ satisfy $\|\mathbf{u}^*-\mathbf{v}\|_2\leq\epsilon$ for some $\epsilon>0$. If 
there exist distinct $k_i\in\mathbb{N}_n$, $i\in\mathbb{N}_l$, for some $l\in\mathbb{Z}_{n+1}$ such that 
\begin{equation}\label{lambda-sufficient-condition}
\lambda>\left|\bm{\psi}_j'(\mathbf{v})\right|+\epsilon L_j, \ \mbox{for all}\ \ j\in \mathbb{N}_n\setminus\{k_i:i\in\mathbb{N}_l\},
\end{equation}
then the sparsity level of $\mathbf{u}^*$ is less than or equal to $l$.
\end{proposition}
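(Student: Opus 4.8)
The plan is to apply Theorem~\ref{choice_sparsity_general_nonsmooth} (the differentiable version) to the given solution $\mathbf{u}^*$, and then show that the Lipschitz hypothesis lets us replace the unknown quantities $\bm{\psi}'_j(\mathbf{u}^*)$ in condition \eqref{lambda_general_smooth2} by the computable surrogates $\bm{\psi}'_j(\mathbf{v})$ at the cost of an additive error $\epsilon L_j$. Concretely, since $\mathbf{u}^*$ is a solution of \eqref{optimization_problem}, write it in its sparsity-partition form $\mathbf{u}^*=\sum_{i\in\mathbb{N}_{l'}}u^*_{m_i}\mathbf{e}_{m_i}\in\Omega_{l'}$ for some $l'\in\mathbb{Z}_{n+1}$, where $\{m_i:i\in\mathbb{N}_{l'}\}$ is exactly the support of $\mathbf{u}^*$. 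Theorem~\ref{choice_sparsity_general_nonsmooth} then forces $\lambda\geq\bigl|\bm{\psi}'_j(\mathbf{u}^*)\bigr|$ for every $j\notin\{m_i:i\in\mathbb{N}_{l'}\}$; equivalently, every index $j$ with $\lambda>\bigl|\bm{\psi}'_j(\mathbf{u}^*)\bigr|$ must lie outside the support, so it must be a zero component of $\mathbf{u}^*$.

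The key step is the contrapositive bookkeeping. Suppose, for contradiction, that the sparsity level of $\mathbf{u}^*$ exceeds $l$, i.e.\ $l'\geq l+1$. The support $\{m_i:i\in\mathbb{N}_{l'}\}$ has $l'$ elements, while the excluded set $\{k_i:i\in\mathbb{N}_l\}$ in the hypothesis has only $l$ elements; hence there exists an index $j_0$ that lies in the support of $\mathbf{u}^*$ but is \emph{not} among the $k_i$. For this $j_0$, the hypothesis \eqref{lambda-sufficient-condition} gives $\lambda>\bigl|\bm{\psi}'_{j_0}(\mathbf{v})\bigr|+\epsilon L_{j_0}$. On the other hand, $L_{j_0}$-Lipschitz continuity of $\bm{\psi}'_{j_0}$ together with $\|\mathbf{u}^*-\mathbf{v}\|_2\leq\epsilon$ yields $\bigl|\bm{\psi}'_{j_0}(\mathbf{u}^*)-\bm{\psi}'_{j_0}(\mathbf{v})\bigr|\leq L_{j_0}\|\mathbf{u}^*-\mathbf{v}\|_2\leq\epsilon L_{j_0}$, so by the triangle inequality $\bigl|\bm{\psi}'_{j_0}(\mathbf{u}^*)\bigr|\leq\bigl|\bm{\psi}'_{j_0}(\mathbf{v})\bigr|+\epsilon L_{j_0}<\lambda$. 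But $j_0$ is in the support of $\mathbf{u}^*$, so $u^*_{j_0}\neq0$, and \eqref{lambda_general_smooth1} of Theorem~\ref{choice_sparsity_general_nonsmooth} forces $\lambda=-\bm{\psi}'_{j_0}(\mathbf{u}^*)\,\mathrm{sign}(u^*_{j_0})$, whence $\lambda=\bigl|\bm{\psi}'_{j_0}(\mathbf{u}^*)\bigr|$ — contradicting the strict inequality just derived. Therefore $l'\leq l$, which is the claim.

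I expect no serious obstacle here; the statement is essentially a robustness wrapper around Theorem~\ref{choice_sparsity_general_nonsmooth}, and the only thing to be careful about is the counting argument (pigeonhole on support size versus $l$) and the fact that one needs the \emph{strict} inequality in \eqref{lambda-sufficient-condition} precisely because \eqref{lambda_general_smooth1} yields the \emph{equality} $\lambda=\bigl|\bm{\psi}'_{j_0}(\mathbf{u}^*)\bigr|$ on the support. One minor point worth stating explicitly: if the hypothesis set $\{k_i:i\in\mathbb{N}_l\}$ happens to contain indices outside the support of $\mathbf{u}^*$ the argument is unaffected, since we only need the existence of \emph{one} support index missing from it, which the inequality $l'\geq l+1>l$ guarantees. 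An alternative, perhaps cleaner, phrasing avoids contradiction: show directly that $\{k_i:i\in\mathbb{N}_l\}$ contains the support of $\mathbf{u}^*$ by proving that every $j\notin\{k_i\}$ satisfies $\bigl|\bm{\psi}'_j(\mathbf{u}^*)\bigr|<\lambda$ and hence (by \eqref{lambda_general_smooth1}) cannot be a nonzero component; then $\|\mathbf{u}^*\|_0\leq l$ is immediate. I would present the direct version.
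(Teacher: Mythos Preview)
Your proposal is correct and matches the paper's proof essentially step for step: the paper uses exactly the ``direct version'' you describe at the end, first deriving $\lambda>\bigl|\bm{\psi}'_j(\mathbf{u}^*)\bigr|$ for every $j\notin\{k_i\}$ via the Lipschitz bound and triangle inequality, and then invoking \eqref{lambda_general_smooth1} of Theorem~\ref{choice_sparsity_general_nonsmooth} to rule out any nonzero component at such $j$. Your observation that the strict inequality in \eqref{lambda-sufficient-condition} is needed precisely to contradict the equality $\lambda=\bigl|\bm{\psi}'_{j_0}(\mathbf{u}^*)\bigr|$ is the same point the paper makes.
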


\begin{proof}
It follows from the Lipschitz continuity of $\bm{\psi}_j'$, $j\in\mathbb{N}_n$, that 
\begin{equation*}\label{Lipschitz-continuity}
\left|\bm{\psi}_j'(\mathbf{u}^*)-\bm{\psi}_j'(\mathbf{v})\right|\leq L_j\|\mathbf{u}^*-\mathbf{v}\|_2,\ \ j\in\mathbb{N}_n.
\end{equation*}
This together with the assumption $\|\mathbf{u}^*-\mathbf{v}\|_2\leq\epsilon$ leads to 
$\left|\bm{\psi}_j'(\mathbf{u}^*)-\bm{\psi}_j'(\mathbf{v})\right|\leq\epsilon L_j$, $j\in\mathbb{N}_n.$ 
Substituting these inequalities into inequality \eqref{lambda-sufficient-condition}, we get that   
\begin{equation*}
\lambda>\left|\bm{\psi}_j'(\mathbf{v})\right|+\left|\bm{\psi}_j'(\mathbf{u}^*)-\bm{\psi}_j'(\mathbf{v})\right|,\ j\in \mathbb{N}_n\setminus\{k_i:i\in\mathbb{N}_l\},
\end{equation*}
which further yields that
\begin{equation}\label{Lipschitz-continuity2} 
\lambda>|\bm{\psi}_j'(\mathbf{u}^*)|,\ j\in \mathbb{N}_n\setminus\{k_i:i\in\mathbb{N}_l\}.
\end{equation}
Suppose that $\mathbf{u}^{*}:=[u_j^*:j\in\mathbb{N}_n]$.
By Theorem \ref{choice_sparsity_general_nonsmooth},
from inequality \eqref{Lipschitz-continuity2} we conclude that $u_j^*=0$ for all $j\in \mathbb{N}_n\setminus\{k_i:i\in\mathbb{N}_l\}$. In fact, if there exists $j_0\in \mathbb{N}_n\setminus\{k_i:i\in\mathbb{N}_l\}$ such that $u_{j_0}^*\neq0$, then Theorem \ref{choice_sparsity_general_nonsmooth} ensures that 
$\lambda=-\bm{\psi}'_{j_0}(\mathbf{u}^{*})\mathrm{sign}({u}_{j_0}^{*})$. 
That is, $\lambda=\left|\bm{\psi}_{j_0}'(\mathbf{u}^*)\right|$, which contradicts to inequality \eqref{Lipschitz-continuity2}. Thus, we get that conclusion that the sparsity level of $\mathbf{u}^*$ is less than or equal to $l$.
\end{proof}

\section{Parameter Choices for Sparsity of the Transformed Regularized Solutions}

In this section, we continue our investigation about what choices of the regularization parameter lead to sparsity under a general transform matrix  $\mathbf{B}$ for the solutions of the regularization problem \eqref{optimization_problem_nm}. 
We first employ the singular value decomposition of matrix  $\mathbf{B}$ to convert  the regularization problem \eqref{optimization_problem_nm} to one with  $\mathbf{B}$ being a {\it degenerated} identity (an identity matrix augmented by zero matrices). Based on this result, we characterize the regularization parameter for a sparse regularized solution of the resulting regularization problem by using the approach used in section 3 for the case when  $\mathbf{B}:= \mathbf{I}$. We then present special results for several specific learning models.

We now characterize the sparsity of a solution under the transform matrix $\mathbf{B}$ of the regularization problem \eqref{optimization_problem_nm}.
Note that if $\mathbf{B}$ is an invertible square matrix, then by a simple change of variables the regularization problem \eqref{optimization_problem_nm} can be converted to problem \eqref{optimization_problem}. For general matrix  $\mathbf{B}$, we appeal to its pseudoinverse (Moore–Penrose inverse) \cite{horn2012matrix}.
To this end, we review the notion of the singular value decomposition  of a matrix  \cite{horn2012matrix}. Suppose that $\mathbf{B}$ is a real $m\times n$ matrix with the rank $r$ satisfying $0<r\leq \mathrm{min}\{m,n\}$. It is well-known that $\mathbf{B}$ has the singular value decomposition as
\begin{equation}\label{psi_SVD}
\mathbf{B}=\mathbf{U}\mathbf{\Lambda} \mathbf{V}^{\top},
\end{equation}
where $\mathbf{U}$ is an $m\times m$ orthogonal matrix, $\mathbf{\Lambda}$ is an $m\times n$ diagonal matrix with the singular values  $\sigma_1\geq\cdots\geq\sigma_r>0$ on the diagonal, and $\mathbf{V}$ is an $n\times n$ orthogonal matrix. 
A matrix, denoted by $\mathbf{M}^{\dag}$, is called the pseudoinverse of $\mathbf{M}$ if it satisfies the four conditions: (1) $\mathbf{M}\mathbf{M}^{\dag}\mathbf{M}=\mathbf{M}$, (2) $\mathbf{M}^{\dag}\mathbf{M}\mathbf{M}^{\dag}=\mathbf{M}^{\dag}$, (3) $(\mathbf{M}\mathbf{M}^{\dag})^{\top}=\mathbf{M}\mathbf{M}^{\dag}$, (4) $(\mathbf{M}^{\dag}\mathbf{M})^{\top}=\mathbf{M}^{\dag}\mathbf{M}$. The pseudoinverse is well-defined and unique for all matrices. It can be readily verified that the  pseudoinverse  $\mathbf{\Lambda}^\dag$ of the $m\times n$ diagonal matrix $\mathbf{\Lambda}$ is the $n\times m$ diagonal matrix with the nonzero diagonal entries given by $\sigma_1^{-1}, \dots, \sigma_r^{-1}$.
Thus, the pseudoinverse $\mathbf{B}^\dag$ of $\mathbf{B}$ can be represented by the singular value decomposition of $\mathbf{B}$ as $\mathbf{B}^\dag=\mathbf{V}\mathbf{\Lambda}^\dag\mathbf{U}^\top$.

In the next lemma, we consider inverting the linear system 
\begin{equation}\label{LinearSystem}
    \mathbf{B}\mathbf{u}=\mathbf{z}, \ \ \mbox{for}\ \ \mathbf{z}\in\mathcal{R}(\mathbf{B}).
\end{equation}
Here, $\mathcal{R}(\mathbf{B})$ denotes the range of $\mathbf{B}$. 
Note that solutions of the system \eqref{LinearSystem} may not be unique since if $\mathbf{u}'$ is a solution of \eqref{LinearSystem}, then $\mathbf{u}:=\mathbf{u}'+\mathbf{u}_0$ is a solution of \eqref{LinearSystem}, for any $\mathbf{u}_0$ satisfying $\mathbf{B}\mathbf{u}_0=\mathbf{0}$. 
It is known from \cite{Bjork1996} that by choosing $\mathbf{u}':=\mathbf{B}^\dag\mathbf{z}$ as a particular solution of \eqref{LinearSystem}, the general solution of \eqref{LinearSystem} has the form $\mathbf{u}=\mathbf{B}^\dag\mathbf{z}+\mathbf{V}
\llbracket\mathbf{0},\mathbf{v}\rrbracket$ for any $\mathbf{v}\in\mathbb{R}^{n-r}$. To convert the regularization problem \eqref{optimization_problem_nm} to an equivalent one, we gives in the next lemma an alternative form of the general solution of \eqref{LinearSystem}.
For convenience, let $\widetilde{\mathbf{U}}_{r}\in\mathbb{R}^{m\times r}$ denote the matrix composed of the first $r$ columns of $\mathbf{U}$. We introduce a diagonal matrix of order $n$ by $\mathbf{\Lambda}':=\mathrm{diag}\left(\sigma_1^{-1},\sigma_2^{-1},\ldots,\sigma_r^{-1},1,\ldots,1\right)$ and an $n\times(m+n-r)$ block diagonal matrix   
$\mathbf{U}':=\mathrm{diag}\left(\widetilde{\mathbf{U}}_{r}^{\top}, \mathbf{I}_{n-r}\right).$
Using these matrices, we define an $n\times(m+n-r)$ matrix  
\begin{equation}\label{psi_widetilde_B}
\mathbf{B}'
:=\mathbf{V}\mathbf{\Lambda}'\mathbf{U}'.
\end{equation}

\begin{lemma}\label{lemma: change variable}
Suppose that $\mathbf{B}$ is a real  $m\times n$ matrix with the singular value decomposition \eqref{psi_SVD} and $\mathbf{B}'$ is defined by \eqref{psi_widetilde_B}. If $\mathbf{z}\in\mathcal{R}(\mathbf{B})$, then the general solution of the linear system  \eqref{LinearSystem} has the form 
\begin{equation}\label{solution-Bu=z}
\mathbf{u}=\mathbf{B}'
\llbracket\mathbf{z},\mathbf{v}\rrbracket, \ \ \mbox{for any}\ \ \mathbf{v}\in\mathbb{R}^{n-r}. 
\end{equation}
Moreover, for each solution $\mathbf{u}$, the vector $\mathbf{v}$ satisfying \eqref{solution-Bu=z} is unique. 
\end{lemma}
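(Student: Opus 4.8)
The plan is to verify two things: first, that every vector of the form $\mathbf{u}=\mathbf{B}'\llbracket\mathbf{z},\mathbf{v}\rrbracket$ with $\mathbf{v}\in\mathbb{R}^{n-r}$ solves \eqref{LinearSystem}, and exhausts the solution set; second, that the correspondence $\mathbf{v}\mapsto\mathbf{u}$ is injective on a fixed solution. The bridge to what is already known is the quoted fact from \cite{Bjork1996} that the general solution of \eqref{LinearSystem} is $\mathbf{u}=\mathbf{B}^\dag\mathbf{z}+\mathbf{V}\llbracket\mathbf{0},\mathbf{v}\rrbracket$ for $\mathbf{v}\in\mathbb{R}^{n-r}$; hence it suffices to show that $\mathbf{B}'\llbracket\mathbf{z},\mathbf{v}\rrbracket=\mathbf{B}^\dag\mathbf{z}+\mathbf{V}\llbracket\mathbf{0},\mathbf{v}\rrbracket$ for every $\mathbf{z}\in\mathcal{R}(\mathbf{B})$ and every $\mathbf{v}$.

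First I would unwind the definition $\mathbf{B}'=\mathbf{V}\mathbf{\Lambda}'\mathbf{U}'$ with $\mathbf{U}'=\mathrm{diag}(\widetilde{\mathbf{U}}_r^\top,\mathbf{I}_{n-r})$ and $\mathbf{\Lambda}'=\mathrm{diag}(\sigma_1^{-1},\ldots,\sigma_r^{-1},1,\ldots,1)$. Applying $\mathbf{U}'$ to $\llbracket\mathbf{z},\mathbf{v}\rrbracket$ produces $\llbracket\widetilde{\mathbf{U}}_r^\top\mathbf{z},\mathbf{v}\rrbracket\in\mathbb{R}^n$. Then $\mathbf{\Lambda}'$ scales the first $r$ entries by $\sigma_i^{-1}$ and leaves the last $n-r$ untouched, and finally multiplication by $\mathbf{V}$ gives a vector in $\mathbb{R}^n$. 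On the other side, writing $\mathbf{B}^\dag=\mathbf{V}\mathbf{\Lambda}^\dag\mathbf{U}^\top$, the first summand $\mathbf{B}^\dag\mathbf{z}$ equals $\mathbf{V}$ applied to the vector whose $i$-th entry is $\sigma_i^{-1}(\mathbf{U}^\top\mathbf{z})_i=\sigma_i^{-1}(\widetilde{\mathbf{U}}_r^\top\mathbf{z})_i$ for $i\in\mathbb{N}_r$ and zero for $i>r$; the second summand $\mathbf{V}\llbracket\mathbf{0},\mathbf{v}\rrbracket$ contributes $\mathbf{v}$ in precisely the last $n-r$ slots before multiplication by $\mathbf{V}$. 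Adding these two and comparing with the expression obtained from $\mathbf{B}'\llbracket\mathbf{z},\mathbf{v}\rrbracket$ shows they are the same, so \eqref{solution-Bu=z} describes exactly the solution set. One small point to state carefully: since $\mathbf{z}\in\mathcal{R}(\mathbf{B})$, the components of $\mathbf{U}^\top\mathbf{z}$ beyond index $r$ vanish, which is why $\widetilde{\mathbf{U}}_r^\top\mathbf{z}$ carries all the relevant information; this is where the hypothesis $\mathbf{z}\in\mathcal{R}(\mathbf{B})$ is used.

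For uniqueness of $\mathbf{v}$, suppose $\mathbf{B}'\llbracket\mathbf{z},\mathbf{v}_1\rrbracket=\mathbf{B}'\llbracket\mathbf{z},\mathbf{v}_2\rrbracket$. By the identification just established this says $\mathbf{V}\llbracket\mathbf{0},\mathbf{v}_1\rrbracket=\mathbf{V}\llbracket\mathbf{0},\mathbf{v}_2\rrbracket$, and since $\mathbf{V}$ is orthogonal, hence injective, we get $\llbracket\mathbf{0},\mathbf{v}_1\rrbracket=\llbracket\mathbf{0},\mathbf{v}_2\rrbracket$, i.e. $\mathbf{v}_1=\mathbf{v}_2$. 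Equivalently, one can observe directly that $\mathbf{B}'$ has full column rank $m+n-r$: $\mathbf{V}$ is invertible, $\mathbf{\Lambda}'$ is invertible, and $\mathbf{U}'$ has rank $r+(n-r)=n$ on its first block plus... here one must be a touch careful, because $\widetilde{\mathbf{U}}_r^\top\in\mathbb{R}^{r\times m}$ is only surjective, not injective, so $\mathbf{U}'$ as a whole is not injective on all of $\mathbb{R}^{m+n-r}$; injectivity of the map $\mathbf{v}\mapsto\mathbf{u}$ is therefore genuinely a statement \emph{given} $\mathbf{z}$ fixed and $\mathbf{z}\in\mathcal{R}(\mathbf{B})$, not a blanket full-rank claim. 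I expect this to be the only subtle point: keeping straight that the ambiguity allowed by $\widetilde{\mathbf{U}}_r^\top$ lives entirely in directions orthogonal to $\mathcal{R}(\mathbf{B})$, so it does not interfere with the $\mathbf{v}$-coordinate. The cleanest writeup routes everything through the explicit comparison with $\mathbf{B}^\dag\mathbf{z}+\mathbf{V}\llbracket\mathbf{0},\mathbf{v}\rrbracket$ and then invokes orthogonality of $\mathbf{V}$, avoiding any delicate rank bookkeeping.
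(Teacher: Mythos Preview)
Your proposal is correct and follows essentially the same route as the paper: start from the known general solution $\mathbf{u}=\mathbf{B}^\dag\mathbf{z}+\mathbf{V}\llbracket\mathbf{0},\mathbf{v}\rrbracket$, check that this coincides with $\mathbf{B}'\llbracket\mathbf{z},\mathbf{v}\rrbracket$ by unwinding the block definitions of $\mathbf{\Lambda}'$ and $\mathbf{U}'$, and then deduce uniqueness of $\mathbf{v}$ from the invertibility of $\mathbf{V}$. One minor remark: the algebraic identity itself does not actually require $\mathbf{z}\in\mathcal{R}(\mathbf{B})$ (the matrix $\mathbf{\Lambda}^\dag$ already annihilates the components of $\mathbf{U}^\top\mathbf{z}$ past index $r$); that hypothesis is needed only so that the quoted Bj\"orck form genuinely describes the solution set of \eqref{LinearSystem}.
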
 
\begin{proof}

Substituting $\mathbf{B}^\dag=\mathbf{V}\mathbf{\Lambda}^\dag\mathbf{U}^\top$ into the representation of the general solution $\mathbf{u}$ of \eqref{LinearSystem}, we have that   
$\mathbf{u}=\mathbf{V}\left(\mathbf{\Lambda}^\dag\mathbf{U}^\top\mathbf{z}+\llbracket\mathbf{0},\mathbf{v}\rrbracket\right)$ for any $\mathbf{v}\in\mathbb{R}^{n-r}$. In terms of $\mathbf{\Lambda'}$ and $\mathbf{U}'$, the above equation can be rewritten as
\begin{equation}\label{matrix_presnet_u}
\mathbf{u}=\mathbf{V}\mathbf{\Lambda'}\mathbf{U}'
\llbracket\mathbf{z},\mathbf{v}\rrbracket.
\end{equation}
By setting $\mathbf{B}'$ as in \eqref{psi_widetilde_B}, equation
\eqref{matrix_presnet_u} can be represented as the desired form \eqref{solution-Bu=z}. 

It remains to verify that for each solution $\mathbf{u}$ of  \eqref{LinearSystem}, the vector $\mathbf{v}$ appearing in \eqref{solution-Bu=z} is unique. Suppose that $\mathbf{v}_1, \mathbf{v}_2 \in\mathbb{R}^{n-r}$ both satisfy  \eqref{solution-Bu=z}. We then obtain that $\mathbf{B}'\llbracket\mathbf{0},\mathbf{v}_1-\mathbf{v}_2\rrbracket=\mathbf{0},$ which together with the definition \eqref{psi_widetilde_B} of $\mathbf{B}'$ implies that $\mathbf{V}\llbracket\mathbf{0}, \mathbf{v}_1-\mathbf{v}_2\rrbracket=\mathbf{0}.$ The the invertibility of $\mathbf{V}$ ensures that $\mathbf{v}_1=\mathbf{v}_2$,  proving the desired result.
\end{proof}

Lemma \ref{lemma: change variable} allows us to introduce a mapping $\mathcal{B}$ from $\mathbb{R}^n$ to $\mathcal{R}(\mathbf{B})\times\mathbb{R}^{n-r}$. Specifically, for each $\mathbf{u}\in\mathbb{R}^n$,  we define  
\begin{equation}\label{mapping_B}
\mathcal{B}\mathbf{u}:=\llbracket\mathbf{z},\mathbf{v}\rrbracket,
\end{equation}
where $\mathbf{z}:=\mathbf{B}\mathbf{u}$ and $\mathbf{v}\in\mathbb{R}^{n-r}$ is the vector satisfying \eqref{solution-Bu=z}.
It follows from Lemma \ref{lemma: change variable} that the mapping $\mathcal{B}$ is well-defined and satisfies that \begin{equation}\label{operator_B_property}
\mathbf{B}'\mathcal{B}\mathbf{u}=\mathbf{u}, \ \mbox{for all}\ \mathbf{u}\in\mathbb{R}^n.
\end{equation} 
Next, we show that $\mathcal{B}$ is bijective. 

\begin{lemma}\label{B_bijective}
If $\mathbf{B}$ is a real $m\times n$ matrix and the mapping $\mathcal{B}$ is defined by \eqref{mapping_B}, then $\mathcal{B}$ is bijective from $\mathbb{R}^n$ onto $\mathcal{R}(\mathbf{B})\times\mathbb{R}^{n-r}$.
\end{lemma}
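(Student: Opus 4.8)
The plan is to establish injectivity and surjectivity of $\mathcal{B}$ separately, in both cases leaning on the left-inverse identity \eqref{operator_B_property}, which exhibits $\mathbf{B}'$ as a left inverse of $\mathcal{B}$, together with the existence-and-uniqueness content of Lemma \ref{lemma: change variable}. I would also note at the outset that $\mathcal{B}$ does map into the claimed codomain: this is immediate from the definition \eqref{mapping_B}, since the first component $\mathbf{z}=\mathbf{B}\mathbf{u}$ lies in $\mathcal{R}(\mathbf{B})$ and the second component $\mathbf{v}$ lies in $\mathbb{R}^{n-r}$.

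First I would dispatch injectivity. Suppose $\mathbf{u}_1,\mathbf{u}_2\in\mathbb{R}^n$ satisfy $\mathcal{B}\mathbf{u}_1=\mathcal{B}\mathbf{u}_2$. Applying $\mathbf{B}'$ to both sides and invoking \eqref{operator_B_property} gives $\mathbf{u}_1=\mathbf{B}'\mathcal{B}\mathbf{u}_1=\mathbf{B}'\mathcal{B}\mathbf{u}_2=\mathbf{u}_2$, so $\mathcal{B}$ is injective. This step is essentially free once \eqref{operator_B_property} is in hand.

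Next I would handle surjectivity. Given an arbitrary element $\llbracket\mathbf{z},\mathbf{v}\rrbracket\in\mathcal{R}(\mathbf{B})\times\mathbb{R}^{n-r}$, the natural candidate preimage is $\mathbf{u}:=\mathbf{B}'\llbracket\mathbf{z},\mathbf{v}\rrbracket$. Since $\mathbf{z}\in\mathcal{R}(\mathbf{B})$, Lemma \ref{lemma: change variable} guarantees that every vector of the form $\mathbf{B}'\llbracket\mathbf{z},\mathbf{v}\rrbracket$ solves the linear system \eqref{LinearSystem}; in particular $\mathbf{B}\mathbf{u}=\mathbf{z}$, so the first component of $\mathcal{B}\mathbf{u}$ is $\mathbf{z}$. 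By the definition \eqref{mapping_B}, the second component of $\mathcal{B}\mathbf{u}$ is then the unique vector $\mathbf{v}'\in\mathbb{R}^{n-r}$ with $\mathbf{u}=\mathbf{B}'\llbracket\mathbf{z},\mathbf{v}'\rrbracket$; but $\mathbf{v}$ itself satisfies this equation by construction of $\mathbf{u}$, so the uniqueness assertion of Lemma \ref{lemma: change variable} forces $\mathbf{v}'=\mathbf{v}$. Hence $\mathcal{B}\mathbf{u}=\llbracket\mathbf{z},\mathbf{v}\rrbracket$, proving surjectivity.

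I do not expect a genuine obstacle in this argument; the only place requiring a moment's care is matching the second (``$\mathbf{v}$'') coordinate of $\mathcal{B}\mathbf{u}$ with the prescribed $\mathbf{v}$, and that is precisely what the uniqueness clause of Lemma \ref{lemma: change variable} delivers. Equivalently, one could phrase the whole proof by observing that \eqref{operator_B_property} together with the identity $\mathcal{B}\,\mathbf{B}'\llbracket\mathbf{z},\mathbf{v}\rrbracket=\llbracket\mathbf{z},\mathbf{v}\rrbracket$ just established show that $\mathbf{B}'$, restricted to $\mathcal{R}(\mathbf{B})\times\mathbb{R}^{n-r}$, is a two-sided inverse of $\mathcal{B}$, so $\mathcal{B}$ is a bijection with inverse $\mathbf{B}'$.
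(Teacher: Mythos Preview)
Your proof is correct and follows essentially the same approach as the paper: injectivity via the left-inverse identity \eqref{operator_B_property}, and surjectivity by taking $\mathbf{u}:=\mathbf{B}'\llbracket\mathbf{z},\mathbf{v}\rrbracket$ and invoking Lemma \ref{lemma: change variable}. Your write-up is slightly more explicit in using the uniqueness clause of Lemma \ref{lemma: change variable} to match the $\mathbf{v}$-coordinate, which the paper compresses into the phrase ``together with the definition of $\mathcal{B}$,'' but the argument is the same.
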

\begin{proof} It suffices to show that  $\mathcal{B}$ is surjective and injective.
We first verify the surjectivity of $\mathcal{B}$. For any $\mathbf{z}\in\mathcal{R}(\mathbf{B})$ and any $\mathbf{v}\in\mathbb{R}^{n-r}$, we define a vector $\mathbf{u}\in\mathbb{R}^n$ through \eqref{solution-Bu=z}. Lemma \ref{lemma: change variable} guarantees that $\mathbf{B}\mathbf{u}=\mathbf{z}$, which together with the definition of $\mathcal{B}$ implies that  $\mathcal{B}\mathbf{u}=\llbracket\mathbf{z},\mathbf{v}\rrbracket$. Hence, $\mathcal{B}$ is surjective. To prove the injectivity of $\mathcal{B}$, we suppose that $\mathbf{u}_1,\mathbf{u}_2\in\mathbb{R}^n$ satisfy  $\mathcal{B}\mathbf{u}_1=\mathcal{B}\mathbf{u}_2$. It follows from equations \eqref{operator_B_property} with $\mathbf{u}$ being replaced respectively by $\mathbf{u}_1$ and $\mathbf{u}_2$ that $\mathbf{u}_1=\mathbf{B}'\mathcal{B}\mathbf{u}_1=\mathbf{B}'\mathcal{B}\mathbf{u}_2=\mathbf{u}_2$. That is, $\mathcal{B}$ is injective. 
\end{proof}

We now reformulate the regularization problem \eqref{optimization_problem_nm} as an equivalent constrained regularization problem with $\mathbf{B}$ being a degenerated identity $\mathbf{I}':=[\mathbf{I}_m \ \mathbf{0}]\in\mathbb{R}^{m\times(m+n-r)}$, that is,
\begin{equation}\label{psi_solve_Bu}
\min\left\{\bm{\psi}\circ
\mathbf{B}'(\mathbf{w})
+\lambda\|\mathbf{I}'\mathbf{w}\|_1:\mathbf{w}\in\mathcal{R}(\mathbf{B})\times\mathbb{R}^{n-r}\right\}.
\end{equation}
Noting that the vector composed by the first $m$ components of $\mathbf{w}$ is constrained to $\mathcal{R}(\mathbf{B})$, \eqref{psi_solve_Bu} is a constrained optimization problem.

\begin{proposition}\label{equi_mini}
Suppose that $\mathbf{B}$ is a real  $m\times n$ matrix with the singular value decomposition \eqref{psi_SVD},  $\mathbf{B}'$ is the matrix defined by  \eqref{psi_widetilde_B} and $\mathcal{B}$ is the mapping defined by \eqref{mapping_B}. Then $\mathbf{u}^*$ is a solution of the regularization problem \eqref{optimization_problem_nm} if and only if $\mathcal{B}\mathbf{u}^*$ is a solution of the regularization problem  \eqref{psi_solve_Bu}. 
\end{proposition}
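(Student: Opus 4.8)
The plan is to exploit the bijection $\mathcal{B}$ established in Lemmas \ref{lemma: change variable} and \ref{B_bijective} to set up a one-to-one correspondence between feasible points of the two problems and then show the objective values match under this correspondence. First I would observe that $\mathcal{B}$ is a bijection from $\mathbb{R}^n$ onto the feasible set $\mathcal{R}(\mathbf{B})\times\mathbb{R}^{n-r}$ of problem \eqref{psi_solve_Bu}, so it suffices to prove that the objective function of \eqref{optimization_problem_nm} evaluated at $\mathbf{u}$ equals the objective function of \eqref{psi_solve_Bu} evaluated at $\mathcal{B}\mathbf{u}$, for every $\mathbf{u}\in\mathbb{R}^n$. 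Denote the objective of \eqref{optimization_problem_nm} by $F(\mathbf{u}):=\bm{\psi}(\mathbf{u})+\lambda\|\mathbf{B}\mathbf{u}\|_1$ and that of \eqref{psi_solve_Bu} by $G(\mathbf{w}):=\bm{\psi}\circ\mathbf{B}'(\mathbf{w})+\lambda\|\mathbf{I}'\mathbf{w}\|_1$.

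The key computation has two pieces. For the fidelity term, I would invoke the identity \eqref{operator_B_property}, namely $\mathbf{B}'\mathcal{B}\mathbf{u}=\mathbf{u}$, which gives immediately $\bm{\psi}\circ\mathbf{B}'(\mathcal{B}\mathbf{u})=\bm{\psi}(\mathbf{u})$. For the regularization term, I would use the definition \eqref{mapping_B} of $\mathcal{B}$: writing $\mathcal{B}\mathbf{u}=\llbracket\mathbf{z},\mathbf{v}\rrbracket$ with $\mathbf{z}=\mathbf{B}\mathbf{u}$, the degenerated identity $\mathbf{I}'=[\mathbf{I}_m\ \mathbf{0}]$ acts by $\mathbf{I}'\llbracket\mathbf{z},\mathbf{v}\rrbracket=\mathbf{z}=\mathbf{B}\mathbf{u}$, so $\lambda\|\mathbf{I}'\mathcal{B}\mathbf{u}\|_1=\lambda\|\mathbf{B}\mathbf{u}\|_1$. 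Adding the two pieces yields $G(\mathcal{B}\mathbf{u})=F(\mathbf{u})$ for all $\mathbf{u}\in\mathbb{R}^n$.

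Having established $F=G\circ\mathcal{B}$ on $\mathbb{R}^n$ together with the bijectivity of $\mathcal{B}:\mathbb{R}^n\to\mathcal{R}(\mathbf{B})\times\mathbb{R}^{n-r}$, the equivalence of the two minimization problems follows by a standard argument: if $\mathbf{u}^*$ minimizes $F$ over $\mathbb{R}^n$, then for any feasible $\mathbf{w}$ of \eqref{psi_solve_Bu} there is a unique $\mathbf{u}\in\mathbb{R}^n$ with $\mathcal{B}\mathbf{u}=\mathbf{w}$, whence $G(\mathbf{w})=F(\mathbf{u})\geq F(\mathbf{u}^*)=G(\mathcal{B}\mathbf{u}^*)$, so $\mathcal{B}\mathbf{u}^*$ minimizes $G$; the converse is symmetric, using that every $\mathbf{u}\in\mathbb{R}^n$ equals $\mathbf{B}'\mathbf{w}$ for $\mathbf{w}=\mathcal{B}\mathbf{u}$. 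I do not anticipate a serious obstacle here — the real work was already done in the preceding lemmas. The only point requiring a modicum of care is to confirm that $\mathbf{B}'$ indeed maps $\mathcal{R}(\mathbf{B})\times\mathbb{R}^{n-r}$ into (in fact onto) $\mathbb{R}^n$ so that $\bm{\psi}\circ\mathbf{B}'$ is well-defined on the constraint set, and that the first-$m$-components constraint $\mathbf{z}\in\mathcal{R}(\mathbf{B})$ is exactly what makes $\llbracket\mathbf{z},\mathbf{v}\rrbracket$ lie in the image of $\mathcal{B}$; both are immediate from Lemma \ref{B_bijective}.
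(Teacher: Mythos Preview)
Your proposal is correct and follows essentially the same approach as the paper: use the bijectivity of $\mathcal{B}$ from Lemma~\ref{B_bijective}, then verify the objective identity $\bm{\psi}(\mathbf{u})+\lambda\|\mathbf{B}\mathbf{u}\|_1=\bm{\psi}\circ\mathbf{B}'(\mathcal{B}\mathbf{u})+\lambda\|\mathbf{I}'\mathcal{B}\mathbf{u}\|_1$ via \eqref{operator_B_property} and the definition of $\mathcal{B}$. The paper's proof is just a terser version of yours, omitting the explicit minimizer-transfer argument that you spell out.
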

\begin{proof}
Note that the mapping $\mathcal{B}$ provides a bijective correspondence between $\mathbb{R}^n$ and $\mathcal{R}(\mathbf{B})\times\mathbb{R}^{n-r}$. It suffices to verify that for all $\mathbf{u}\in\mathbb{R}^n$  there holds
\begin{equation*}\label{two-objective-function}
\bm{\psi}(\mathbf{u})+\lambda\|\mathbf{B}\mathbf{u}\|_{1}=\bm{\psi}\circ\mathbf{B}'(\mathcal{B}\mathbf{u})+\lambda\|\mathbf{I}'\mathcal{B}\mathbf{u}\|_{1}.
\end{equation*}
By the definition of $\mathcal{B}$, we have that $\mathbf{I}'\mathcal{B}\mathbf{u}=\mathbf{B}\mathbf{u}$. This together with equation \eqref{operator_B_property} confirms the validity of the equation above. 
\end{proof}

We reformulate the regularization problem \eqref{psi_solve_Bu} as an equivalent unconstrained regularization problem for the purpose of characterizing its sparse solutions. 
Set 
$\mathbb{M}:=\mathcal{R}(\mathbf{B})\times\mathbb{R}^{n-r}$
and denote by $\mathbb{M}^{\bot}$ its orthogonal complement. Let $\iota_{\mathbb{M}}:\mathbb{R}^{m+n-r}\rightarrow\mathbb{R}\cup\{+\infty\}$ be the indicator function of the subset $\mathbb{M}$, that is, $\iota_{\mathbb{M}}(\mathbf{x})=0$ if $\mathbf{x}\in \mathbb{M}$, and $+\infty$ otherwise. Using the indicator
function, the constrained  regularization problem \eqref{psi_solve_Bu} is rewritten as the equivalent unconstrained regularization problem 
\begin{equation}\label{psi_solve_Bu_unconstrained}
\min\left\{\bm{\psi}\circ
\mathbf{B}'(\mathbf{w})+\iota_{\mathbb{M}}(\mathbf{w})
+\lambda\|\mathbf{I}'\mathbf{w}\|_1:\mathbf{w}\in\mathbb{R}^{m+n-r}\right\}.
\end{equation}
We present below a characterization of a solution of the regularization problem \eqref{psi_solve_Bu_unconstrained} having sparsity of a certain level. Let $\mathcal{N}(\mathbf{A})$ denote the null space of matrix $\mathbf{A}$. 

\begin{proposition}\label{sparse_equ_mini}
Suppose that $\bm{\psi}$ is a convex function on $\mathbb{R}^n$, $\mathbf{B}$ is a real $m\times n$ matrix with the singular value decomposition \eqref{psi_SVD} and $\mathbf{B}'$ is defined by \eqref{psi_widetilde_B}. Then the regularization problem \eqref{psi_solve_Bu_unconstrained} with $\lambda>0$ has a solution  $\mathbf{w}^*:=\llbracket\mathbf{z}^*,\mathbf{v}^*\rrbracket$ with $\mathbf{z}^*:=\sum_{i\in\mathbb{N}_l}z^*_{k_i}\mathbf{e}_{k_i}\in \Omega_l$ for some $l\in\mathbb{Z}_{m+1}$ and distinct $k_i\in \mathbb{N}_n$, $i\in \mathbb{N}_l$ if and only if there exist $\mathbf{a}\in\partial\bm{\psi}(\mathbf{B}'\mathbf{w}^*)$ and $\mathbf{b}:=[b_j:j\in\mathbb{N}_m]\in\mathcal{N}(\mathbf{B}^{\top})$ such that 
\begin{equation}\label{cond1_B}
\lambda=-\left( (\mathbf{B}_{k_i}')^\top\mathbf{a}+b_{k_i}\right)\mathrm{sign}(z_{k_i}^*),\ i\in\mathbb{N}_l,
\end{equation}
\begin{equation}\label{cond2_B}
\lambda\geq\left|(\mathbf{B}_j')^\top\mathbf{a}+b_j\right|, \ j\in \mathbb{N}_m\setminus  \{k_i:i\in\mathbb{N}_l\},
\end{equation}
\begin{equation}\label{cond3_B}
(\mathbf{B}'_{j})^{\top}\mathbf{a}={0}, \ j\in\mathbb{N}_{m+n-r}\setminus\mathbb{N}_m.
\end{equation}
\end{proposition}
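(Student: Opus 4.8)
The plan is to apply the Fermat rule to the unconstrained problem \eqref{psi_solve_Bu_unconstrained}, whose objective is a sum of three convex functions: $\bm{\psi}\circ\mathbf{B}'$, the indicator $\iota_{\mathbb{M}}$, and $\lambda\|\mathbf{I}'\cdot\|_1$. Since the latter two are continuous on their domains in a way that permits the subdifferential sum rule (the $\ell_1$ term is continuous everywhere, and one can pass $\iota_\mathbb{M}$ through because $\mathbb{M}$ is a subspace meeting the relative interior of the domain of $\bm{\psi}\circ\mathbf{B}'$, which is all of $\mathbb{R}^{m+n-r}$), we get that $\mathbf{w}^*$ is a solution if and only if
$\mathbf{0}\in\partial(\bm{\psi}\circ\mathbf{B}')(\mathbf{w}^*)+\partial\iota_{\mathbb{M}}(\mathbf{w}^*)+\lambda\,\partial\|\mathbf{I}'\cdot\|_1(\mathbf{w}^*)$.
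First I would identify each of the three pieces. For the composition, the chain rule for subdifferentials gives $\partial(\bm{\psi}\circ\mathbf{B}')(\mathbf{w}^*)=(\mathbf{B}')^{\top}\partial\bm{\psi}(\mathbf{B}'\mathbf{w}^*)$, so its elements are $(\mathbf{B}')^{\top}\mathbf{a}$ with $\mathbf{a}\in\partial\bm{\psi}(\mathbf{B}'\mathbf{w}^*)$. For the indicator, $\partial\iota_{\mathbb{M}}(\mathbf{w}^*)=\mathbb{M}^{\perp}$ since $\mathbb{M}$ is a subspace; I would then compute $\mathbb{M}^{\perp}=(\mathcal{R}(\mathbf{B})\times\mathbb{R}^{n-r})^{\perp}=\mathcal{R}(\mathbf{B})^{\perp}\times\{\mathbf{0}\}=\mathcal{N}(\mathbf{B}^{\top})\times\{\mathbf{0}\}$, so its elements are $\llbracket\mathbf{b},\mathbf{0}\rrbracket$ with $\mathbf{b}\in\mathcal{N}(\mathbf{B}^{\top})$. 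For the $\ell_1$ term, writing $\mathbf{I}'\mathbf{w}^*=\mathbf{z}^*=\sum_{i\in\mathbb{N}_l}z^*_{k_i}\mathbf{e}_{k_i}$, the subdifferential $\partial\|\mathbf{I}'\cdot\|_1(\mathbf{w}^*)=(\mathbf{I}')^{\top}\partial\|\cdot\|_1(\mathbf{z}^*)$ consists of vectors in $\mathbb{R}^{m+n-r}$ whose first $m$ coordinates form a vector $\mathbf{z}$ with $z_{k_i}=\mathrm{sign}(z^*_{k_i})$ for $i\in\mathbb{N}_l$ and $|z_j|\le 1$ for the remaining $j\in\mathbb{N}_m$, and whose last $n-r$ coordinates are zero (this uses the same subdifferential-of-$\ell_1$-at-a-sparse-point computation already employed in the proof of Theorem~\ref{choice_sparsity_general_nonsmooth}).

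Next I would assemble the inclusion coordinate-wise. The condition becomes: there exist $\mathbf{a}\in\partial\bm{\psi}(\mathbf{B}'\mathbf{w}^*)$, $\mathbf{b}\in\mathcal{N}(\mathbf{B}^{\top})$, and $\mathbf{z}\in\mathbb{R}^m$ in the $\ell_1$-subdifferential set described above, such that
$(\mathbf{B}')^{\top}\mathbf{a}+\llbracket\mathbf{b},\mathbf{0}\rrbracket+\lambda(\mathbf{I}')^{\top}\mathbf{z}=\mathbf{0}$.
Reading this in the first $m$ coordinates: for $j\in\mathbb{N}_m$, $(\mathbf{B}'_j)^{\top}\mathbf{a}+b_j+\lambda z_j=0$, i.e. $\lambda z_j=-\big((\mathbf{B}'_j)^{\top}\mathbf{a}+b_j\big)$. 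For the active indices $k_i$ this says $\lambda\,\mathrm{sign}(z^*_{k_i})=-\big((\mathbf{B}'_{k_i})^{\top}\mathbf{a}+b_{k_i}\big)$, which (multiplying by $\mathrm{sign}(z^*_{k_i})$) is exactly \eqref{cond1_B}; for the inactive indices the constraint $|z_j|\le 1$ together with $\lambda z_j=-\big((\mathbf{B}'_j)^{\top}\mathbf{a}+b_j\big)$ gives $|(\mathbf{B}'_j)^{\top}\mathbf{a}+b_j|\le\lambda$, which is \eqref{cond2_B}. Reading the last $n-r$ coordinates: since the $\ell_1$-subdifferential vector has zero there and the $\mathbb{M}^{\perp}$ vector has zero there, we are left with $(\mathbf{B}'_j)^{\top}\mathbf{a}=0$ for $j\in\mathbb{N}_{m+n-r}\setminus\mathbb{N}_m$, which is \eqref{cond3_B}. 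Conversely, given $\mathbf{a},\mathbf{b}$ satisfying \eqref{cond1_B}--\eqref{cond3_B}, I would define $z_j:=-\lambda^{-1}\big((\mathbf{B}'_j)^{\top}\mathbf{a}+b_j\big)$ for $j\in\mathbb{N}_m$, verify it lies in $\partial\|\cdot\|_1(\mathbf{z}^*)$ using exactly those relations, and conclude the Fermat inclusion holds, so $\mathbf{w}^*$ is a solution.

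The main obstacle I anticipate is justifying the subdifferential calculus cleanly: specifically the chain rule $\partial(\bm{\psi}\circ\mathbf{B}')=(\mathbf{B}')^{\top}\partial\bm{\psi}(\mathbf{B}'\cdot)$ and the three-term sum rule in the presence of the indicator function $\iota_{\mathbb{M}}$, which is not continuous. For the chain rule, since $\bm{\psi}$ is a real-valued convex function on all of $\mathbb{R}^n$, it is continuous, and the composition with the linear map $\mathbf{B}'$ is covered by the standard constraint-qualification-free result for real-valued convex functions composed with linear maps. For the sum rule, I would invoke the version valid when all but one summand is polyhedral (the indicator of a subspace is polyhedral) or, alternatively, note that $\mathrm{dom}(\bm{\psi}\circ\mathbf{B}')=\mathbb{R}^{m+n-r}$ so its relative interior meets $\mathbb{M}=\mathrm{dom}\,\iota_{\mathbb{M}}$, giving the Moreau--Rockafellar additivity; the $\ell_1$ term, being finite and continuous everywhere, is absorbed without any qualification, just as in the earlier proofs in this section. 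Once these calculus facts are in place, the rest is the bookkeeping in the preceding paragraph, together with the elementary identity $\mathbb{M}^{\perp}=\mathcal{N}(\mathbf{B}^{\top})\times\{\mathbf{0}\}$.
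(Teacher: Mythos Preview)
Your proposal is correct and follows essentially the same route as the paper: apply the Fermat rule, split the subdifferential into the three pieces $(\mathbf{B}')^\top\partial\bm{\psi}(\mathbf{B}'\mathbf{w}^*)$, $\partial\iota_{\mathbb{M}}(\mathbf{w}^*)=\mathbb{M}^\perp=\mathcal{N}(\mathbf{B}^\top)\times\{\mathbf{0}\}$, and $\lambda(\mathbf{I}')^\top\partial\|\cdot\|_1(\mathbf{z}^*)$, then read off \eqref{cond1_B}--\eqref{cond3_B} coordinate-wise. Your discussion of the constraint qualification needed for the sum rule with $\iota_{\mathbb{M}}$ is more careful than the paper's, which simply invokes ``the Fermat rule and the chain rule of the subdifferential'' without further comment.
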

\begin{proof}
According to the Fermat rule and the chain rule of the subdifferential, we get that  $\mathbf{w}^*:=
\llbracket\mathbf{z}^*,\mathbf{v}^*\rrbracket$ with $\mathbf{z}^*:=\sum_{i\in\mathbb{N}_l}z^*_{k_i}\mathbf{e}_{k_i}\in \Omega_l$ is a solution of \eqref{psi_solve_Bu_unconstrained} if and only if 
\begin{equation}\label{eq: 0inV'topnabla}
\mathbf{0}\in(\mathbf{B}')^\top\partial\bm{\psi}(\mathbf{B}'\mathbf{w}^*)+\partial\iota_{\mathbb{M}}(\mathbf{w}^*)+\lambda(\mathbf{I}')^\top\partial \|\cdot\|_1(\mathbf{z}^*).
\end{equation}
It is known that $\partial\iota_{\mathbb{M}}(\mathbf{w})=\mathbb{M}^{\bot}$ for all $\mathbf{w}\in \mathbb{M}$, which together with 
$$
\mathbb{M}^{\bot}=(\mathcal{R}(\mathbf{B}))^{\bot}\times(\mathbb{R}^{n-r})^{\bot}=\mathcal{N}(\mathbf{B}^{\top})\times\{\mathbf{0}\}
$$
further leads to 
$\partial\iota_{\mathbb{M}}(\mathbf{w})=\mathcal{N}(\mathbf{B}^{\top})\times\{\mathbf{0}\}$, for all $\mathbf{w}\in \mathbb{M}$.
By employing the above equation and noting that $\mathbf{w}^*\in \mathbb{M}$, the inclusion relation \eqref{eq: 0inV'topnabla} is equivalent to the existence of $\mathbf{a}\in\partial\bm{\psi}(\mathbf{B}'\mathbf{w}^*)$ and $\mathbf{b}\in\mathcal{N}(\mathbf{B}^{\top})$ satisfying
\begin{equation}\label{0inB'topnablapsiB'w*}
-(\mathbf{B}')^\top\mathbf{a}-\llbracket\mathbf{b},\mathbf{0}\rrbracket \in\lambda(\mathbf{I}')^\top\partial \|\cdot\|_1(\mathbf{z}^*).
\end{equation}
Moreover, noting that $\mathbf{z}^{*}=\sum_{i\in\mathbb{N}_{l}}z_{k_i}^*\mathbf{e}_{k_i}$ with $z_{k_i}^*\in\mathbb{R}\setminus\{0\}$, $i\in\mathbb{N}_l$, we obtain that 
\begin{equation*}\label{subdiff-1norm-z*}
\partial\|\cdot\|_1(\mathbf{z}^*)
=\left\{\mathbf{x}\in\mathbb{R}^m: x_{k_i}=\mathrm{sign}(z_{k_i}^*),i\in\mathbb{N}_l \ \mbox{and}\ |x_j|\leq 1, j\in\mathbb{N}_m\setminus\{k_i:i\in\mathbb{N}_l\}\right\}. 
\end{equation*}
According to the above equation,  the inclusion relation \eqref{0inB'topnablapsiB'w*} may be rewritten equivalently as \eqref{cond1_B}, \eqref{cond2_B} and \eqref{cond3_B}. This proves the desired result.
\end{proof}

Combining Propositions \ref{equi_mini} and \ref{sparse_equ_mini}, we  characterize a solution of the regularization problem \eqref{optimization_problem_nm} having sparsity of a certain level under the transform $\mathbf{B}$.

\begin{theorem}\label{sparse_B}
Suppose that $\bm{\psi}$ is a convex function on $\mathbb{R}^n$ and $\mathbf{B}$ is a real $m\times n$ matrix with the singular value decomposition \eqref{psi_SVD}. Let $\mathbf{B}'$ be defined by \eqref{psi_widetilde_B}. Then the regularization problem \eqref{optimization_problem_nm} with $\lambda>0$ has a solution $\mathbf{u}^{*}\in\mathbb{R}^n$ with $\mathbf{B}\mathbf{u}^*:=\sum_{i\in\mathbb{N}_l}z^*_{k_i}\mathbf{e}_{k_i}\in \Omega_l$ for some $l\in\mathbb{Z}_{m+1}$ if and only if there exist $\mathbf{a}\in\partial\bm{\psi}(\mathbf{u}^*)$ and $\mathbf{b}\in\mathcal{N}(\mathbf{B}^{\top})$ such that \eqref{cond1_B}, \eqref{cond2_B} and \eqref{cond3_B} hold. 
\end{theorem}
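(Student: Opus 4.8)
The plan is to obtain Theorem \ref{sparse_B} by splicing together Propositions \ref{equi_mini} and \ref{sparse_equ_mini} via the bijection $\mathcal{B}$ and the identity \eqref{operator_B_property}; essentially all the analytic work has already been done in those two propositions, so the argument is a matter of transporting the data back and forth. First I would invoke Proposition \ref{equi_mini}: a vector $\mathbf{u}^*\in\mathbb{R}^n$ solves the regularization problem \eqref{optimization_problem_nm} if and only if $\mathcal{B}\mathbf{u}^*$ solves the reformulated constrained problem \eqref{psi_solve_Bu}, which in turn (by the passage to the indicator function preceding it) is the same as solving the unconstrained problem \eqref{psi_solve_Bu_unconstrained}. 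Writing $\mathbf{w}^*:=\mathcal{B}\mathbf{u}^*=\llbracket\mathbf{z}^*,\mathbf{v}^*\rrbracket$ according to the definition \eqref{mapping_B}, we have by construction $\mathbf{z}^*=\mathbf{B}\mathbf{u}^*$; hence the hypothesis $\mathbf{B}\mathbf{u}^*=\sum_{i\in\mathbb{N}_l}z^*_{k_i}\mathbf{e}_{k_i}\in\Omega_l$ is literally the statement that the first block $\mathbf{z}^*$ of $\mathbf{w}^*$ lies in $\Omega_l\subset\mathbb{R}^m$ with nonzero components indexed by the same $k_i$, $i\in\mathbb{N}_l$.

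Next I would apply Proposition \ref{sparse_equ_mini} to $\mathbf{w}^*$: it is a solution of \eqref{psi_solve_Bu_unconstrained} of the stated sparse form if and only if there exist $\mathbf{a}\in\partial\bm{\psi}(\mathbf{B}'\mathbf{w}^*)$ and $\mathbf{b}\in\mathcal{N}(\mathbf{B}^{\top})$ satisfying \eqref{cond1_B}, \eqref{cond2_B} and \eqref{cond3_B}. The final step is to rewrite $\mathbf{B}'\mathbf{w}^*$ in terms of $\mathbf{u}^*$: since $\mathbf{w}^*=\mathcal{B}\mathbf{u}^*$, the identity \eqref{operator_B_property} gives $\mathbf{B}'\mathbf{w}^*=\mathbf{B}'\mathcal{B}\mathbf{u}^*=\mathbf{u}^*$, so $\partial\bm{\psi}(\mathbf{B}'\mathbf{w}^*)=\partial\bm{\psi}(\mathbf{u}^*)$ and the subgradient $\mathbf{a}$ may be taken from $\partial\bm{\psi}(\mathbf{u}^*)$. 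Because the conditions \eqref{cond1_B}–\eqref{cond3_B} are phrased purely in terms of the columns $\mathbf{B}_j'$, the vectors $\mathbf{a},\mathbf{b}$, and the signs of the components $z^*_{k_i}$ of $\mathbf{B}\mathbf{u}^*$, nothing further needs translating; assembling the chain of equivalences (solve \eqref{optimization_problem_nm} $\iff$ $\mathbf{w}^*$ solves \eqref{psi_solve_Bu_unconstrained} $\iff$ \eqref{cond1_B}–\eqref{cond3_B}) yields exactly the claim.

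Since the proof is bookkeeping, there is no deep obstacle; the one point that needs care is transporting the sparsity data correctly. I must check that $\Omega_l$ in the theorem statement really is the sparsity partition of $\mathbb{R}^m$ (because $\mathbf{B}\mathbf{u}^*\in\mathbb{R}^m$), so that it matches the $\Omega_l$ applied to the first block $\mathbf{z}^*$ in Proposition \ref{sparse_equ_mini}, and that the index set $\{k_i:i\in\mathbb{N}_l\}$ and the level $l\le m$ pass through $\mathcal{B}$ unchanged, which they do because $\mathbf{z}^*=\mathbf{B}\mathbf{u}^*$ identically. I would also note the consistency point that $\mathbf{z}^*=\mathbf{B}\mathbf{u}^*$ automatically lies in $\mathcal{R}(\mathbf{B})$, so the membership $\mathbf{w}^*\in\mathbb{M}$ built into \eqref{psi_solve_Bu} is never in conflict with the prescribed sparse representation, and that the uniqueness of $\mathbf{v}^*$ from Lemma \ref{lemma: change variable} (together with Lemma \ref{B_bijective}) is what makes $\mathbf{w}^*=\mathcal{B}\mathbf{u}^*$ a well-defined point at which Proposition \ref{sparse_equ_mini} can be invoked.
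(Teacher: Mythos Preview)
Your proposal is correct and follows essentially the same route as the paper: invoke Proposition \ref{equi_mini} to pass from $\mathbf{u}^*$ to $\mathbf{w}^*=\mathcal{B}\mathbf{u}^*$, apply Proposition \ref{sparse_equ_mini} to characterize the sparse solution, and then use the identity \eqref{operator_B_property} to rewrite $\partial\bm{\psi}(\mathbf{B}'\mathbf{w}^*)$ as $\partial\bm{\psi}(\mathbf{u}^*)$. Your additional remarks about the consistency of $\Omega_l\subset\mathbb{R}^m$, the transport of the index set $\{k_i\}$, and the automatic membership $\mathbf{z}^*\in\mathcal{R}(\mathbf{B})$ are accurate and simply make explicit what the paper leaves implicit.
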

\begin{proof}
By Proposition \ref{equi_mini} we conclude that $\mathbf{u}^{*}\in\mathbb{R}^n$ is a solution of \eqref{optimization_problem_nm} and $\mathbf{B}\mathbf{u}^*:=\sum_{i\in\mathbb{N}_l}z^*_{k_i}\mathbf{e}_{k_i}\in \Omega_l$ if and only if $\mathcal{B}\mathbf{u}^*:=
\llbracket\mathbf{z}^*,\mathbf{v}^*\rrbracket$ with $\mathbf{z}^*:=\sum_{i\in\mathbb{N}_l}z^*_{k_i}\mathbf{e}_{k_i}\in \Omega_l$ is a solution of the unconstrained regularization problem \eqref{psi_solve_Bu_unconstrained}. Proposition \ref{sparse_equ_mini} ensures that the latter is equivalent to that there exist $\mathbf{a}\in\partial\bm{\psi}(\mathbf{B}'\mathcal{B}\mathbf{u}^*)$ and $\mathbf{b}\in\mathcal{N}(\mathbf{B}^{\top})$ such that  \eqref{cond1_B}, \eqref{cond2_B} and \eqref{cond3_B} hold. 
By equation \eqref{operator_B_property}, we observe that $\mathbf{B}'\mathcal{B}\mathbf{u}^*=\mathbf{u}^*$, from which the desired result is obtained.
\end{proof}

For the most sparse solution $\mathbf{u}^*$ under the transform (that is, $\mathbf{B}\mathbf{u}^*=\mathbf{0}$),  conditions \eqref{cond1_B}, \eqref{cond2_B} and \eqref{cond3_B} reduce to $\lambda\geq \left\|(\widetilde{\mathbf{B}}'_1)^{\top}\mathbf{a}+\mathbf{b}\right\|_{\infty}$ and $(\widetilde{\mathbf{B}}'_2)^{\top}\mathbf{a}=\mathbf{0}, $ where $\widetilde{\mathbf{B}}'_1$ and $\widetilde{\mathbf{B}}'_2$ denote the matrices composed of the first $m$ columns and the last $n-r$ columns of $\mathbf{B}'$, respectively.

We next describe a special characterization when the transform matrix $\mathbf{B}$ has full row rank, that is, $\mathrm{rank}(\mathbf{B})=m$.

\begin{corollary}\label{sparse_B_full}
Suppose that $\bm{\psi}$ is a convex function on $\mathbb{R}^n$ and $\mathbf{B}$ is a real $m\times n$ matrix having full row rank. Let $\mathbf{B}'$ be defined by \eqref{psi_widetilde_B}. Then the regularization problem \eqref{optimization_problem_nm} with $\lambda>0$ has a solution $\mathbf{u}^{*}$ with $\mathbf{B}\mathbf{u}^*:=\sum_{i\in\mathbb{N}_l}z^*_{k_i}\mathbf{e}_{k_i}\in\Omega_l$ for some $l\in\mathbb{Z}_{m+1}$ if and only if there exists $\mathbf{a}\in\partial\bm{\psi}(\mathbf{u}^*)$ such that 
\begin{equation}\label{cond1_B_full}
\lambda=-(\mathbf{B}'_{k_i})^{\top}\mathbf{a}\, \mathrm{sign}(z^*_{k_i}),\ i\in\mathbb{N}_l,
\end{equation}
\begin{equation}\label{cond2_B_full}
\lambda\geq\left|(\mathbf{B}'_j)^{\top}\mathbf{a}\right|, \ j\in \mathbb{N}_m\setminus  \{k_i:i\in\mathbb{N}_l\},
\end{equation}
\begin{equation}\label{cond3_B_full}
(\mathbf{B}'_{j})^{\top}\mathbf{a}={0}, \ j\in\mathbb{N}_{n}\setminus\mathbb{N}_m.
\end{equation}
\end{corollary}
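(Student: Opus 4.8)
The plan is to derive Corollary \ref{sparse_B_full} as an immediate specialization of Theorem \ref{sparse_B} to the case in which the rank $r$ of $\mathbf{B}$ equals $m$. First I would record two elementary consequences of the full row rank hypothesis. Since $\mathbf{B}^{\top}$ then has full column rank $m$, its null space is trivial: $\mathcal{N}(\mathbf{B}^{\top})=\{\mathbf{0}\}$. Hence the only vector $\mathbf{b}:=[b_j:j\in\mathbb{N}_m]$ eligible to appear in Theorem \ref{sparse_B} is $\mathbf{b}=\mathbf{0}$, so that $b_j=0$ for every $j\in\mathbb{N}_m$. Moreover $r=m$ gives $m+n-r=n$, so the index range $\mathbb{N}_{m+n-r}\setminus\mathbb{N}_m$ occurring in \eqref{cond3_B} is exactly $\mathbb{N}_n\setminus\mathbb{N}_m$; and the matrix $\mathbf{B}'$ defined by \eqref{psi_widetilde_B} still has $n$ columns in this case (its last $n-m$ columns arising from the block $\mathbf{I}_{n-r}=\mathbf{I}_{n-m}$ inside $\mathbf{U}'$), so the columns $\mathbf{B}'_j$ referred to in \eqref{cond1_B_full}--\eqref{cond3_B_full} are well defined.

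Next I would apply Theorem \ref{sparse_B} directly: the regularization problem \eqref{optimization_problem_nm} has a solution $\mathbf{u}^{*}$ with $\mathbf{B}\mathbf{u}^*=\sum_{i\in\mathbb{N}_l}z^*_{k_i}\mathbf{e}_{k_i}\in\Omega_l$ if and only if there exist $\mathbf{a}\in\partial\bm{\psi}(\mathbf{u}^*)$ and $\mathbf{b}\in\mathcal{N}(\mathbf{B}^{\top})$ for which \eqref{cond1_B}, \eqref{cond2_B} and \eqref{cond3_B} hold. By the first paragraph, any such $\mathbf{b}$ is forced to be $\mathbf{0}$, so substituting $b_j=0$ into \eqref{cond1_B} and \eqref{cond2_B}, and rewriting the index set in \eqref{cond3_B} as $\mathbb{N}_n\setminus\mathbb{N}_m$, turns these three conditions into precisely \eqref{cond1_B_full}, \eqref{cond2_B_full} and \eqref{cond3_B_full}. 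For the converse implication I would simply note that if an $\mathbf{a}\in\partial\bm{\psi}(\mathbf{u}^*)$ satisfying \eqref{cond1_B_full}--\eqref{cond3_B_full} exists, then the pair $(\mathbf{a},\mathbf{0})$ with $\mathbf{0}\in\mathcal{N}(\mathbf{B}^{\top})$ satisfies \eqref{cond1_B}--\eqref{cond3_B}, so Theorem \ref{sparse_B} delivers the asserted solution.

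Because the statement is a specialization, there is no real obstacle; the only thing demanding a little attention is the index bookkeeping --- confirming $m+n-r=n$ and that the relevant columns of $\mathbf{B}'$ are well defined --- together with the observation that full row rank makes the vector $\mathbf{b}$ disappear. As an alternative route one could bypass Theorem \ref{sparse_B} and argue from Propositions \ref{equi_mini} and \ref{sparse_equ_mini} directly, observing that full row rank makes $\mathbb{M}=\mathcal{R}(\mathbf{B})\times\mathbb{R}^{n-r}=\mathbb{R}^m\times\mathbb{R}^{n-m}=\mathbb{R}^n$, so the indicator term $\iota_{\mathbb{M}}$ in \eqref{psi_solve_Bu_unconstrained} is identically zero and the reformulated problem is already unconstrained; but invoking Theorem \ref{sparse_B} is the most economical path.
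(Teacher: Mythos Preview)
Your proposal is correct and follows essentially the same approach as the paper: both derive the corollary directly from Theorem \ref{sparse_B} by observing that full row rank forces $\mathcal{N}(\mathbf{B}^{\top})=\{\mathbf{0}\}$, so the vector $\mathbf{b}$ must vanish. Your version is more explicit about the index bookkeeping (noting $m+n-r=n$) and spells out both directions of the equivalence, but the core argument is identical.
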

\begin{proof}
Note that  
$\mathcal{N}(\mathbf{B}^\top)=\left(\mathcal{R}(\mathbf{B})\right)^\bot=\{\mathbf{0}\},$ when $\mathrm{rank}(\mathbf{B})=m$. It follows that  $\mathbf{b}$ in Theorem \ref{sparse_B} is the zero vector.
Thus, the desired result can be obtained directly from Theorem \ref{sparse_B}.
\end{proof}

As an application of Corollary \ref{sparse_B_full}, we specialize it to the $\ell_1$ SVM classification model 
with the hinge loss function. Suppose that  $D:=\{(\mathbf{x}_j,y_j):j\in\mathbb{N}_n\}\subset\mathbb{R}^d\times\{-1,1\}$ is the given data and $K$ is a reproducing kernel on $\mathbb{R}^d$. Set $\mathbf{Y}:=\mathrm{diag}(y_j: j\in\mathbb{N}_n)$ and $\mathbf{K}':=[\mathbf{K} \ \mathbf{1}_n]$ with $\mathbf{K}:=[K(\mathbf{x}_j,\mathbf{x}_k):j,k\in\mathbb{N}_n]$. The $\ell_1$ SVM classification model
with the hinge loss function has the form 
\begin{equation}\label{SVM_hinge_loss}
\min\left\{\bm{\phi}(\mathbf{Y}\mathbf{K}'\mathbf{u})
+\lambda\|\mathbf{B}\mathbf{u}\|_1:\mathbf{u}\in\mathbb{R}^{n+1}\right\},
\end{equation}
where $\bm{\phi}$ is defined by \eqref{sum of hinge loss} and $\mathbf{B}:=[\mathbf{I}_n \ \mathbf{0}]\in\mathbb{R}^{n\times(n+1)}$. By introducing a univariate function $\phi(x):=\max\{1-x,0\}$,  $x\in\mathbb{R}$, we represent $\bm{\phi}$ as 
\begin{equation}\label{sum of hinge loss1} 
\bm{\phi}(\mathbf{x})=\sum_{j\in\mathbb{N}_n}\phi(x_j), \ 
\mathbf{x}:=[x_j:j\in\mathbb{N}_n]\in\mathbb{R}^n.
\end{equation}


\begin{corollary}\label{general-example1}
Suppose that $(\mathbf{x}_j,y_j)\in\mathbb{R}^d\times\{1,-1\}$, $j\in\mathbb{N}_n$, and $K$ is a reproducing kernel on $\mathbb{R}^d$. Let function $\bm{\phi}$, matrices $\mathbf{K}$, $\mathbf{K}^{'}$, $\mathbf{Y}$ and $\mathbf{B}$ be defined as above.  Then the regularization problem \eqref{SVM_hinge_loss} with $\lambda>0$ has a solution $\mathbf{u}^{*}$ with $\mathbf{B}\mathbf{u}^*:=\sum_{i\in\mathbb{N}_l}z^*_{k_i}\mathbf{e}_{k_i}\in \Omega_l$ for some $l\in\mathbb{Z}_{n+1}$ if and only if there exists $\mathbf{c}:=[c_j:j\in\mathbb{N}_n]\in\mathbb{R}^n$ with $c_j\in\partial\phi((\mathbf{Y}\mathbf{K}'\mathbf{u}^*)_j)$, $j\in\mathbb{N}_n$, such that 
\begin{equation}\label{lambda_general-example1_1}
\lambda=-(\mathbf{Y}\mathbf{K}_{k_i})^{\top}\mathbf{c} \mathrm{sign}(z^*_{k_i}),\ i\in\mathbb{N}_l,
\end{equation}
\begin{equation}\label{lambda_general-example1_2}
\lambda\geq\left|(\mathbf{Y}\mathbf{K}_{j})^{\top}\mathbf{c}\right|, \ j\in \mathbb{N}_n\setminus  \{k_i:i\in\mathbb{N}_l\},    
\end{equation}
\begin{equation}\label{lambda_general-example1_3}
\mathbf{y}^{\top}\mathbf{c}={0}.
\end{equation}
\end{corollary}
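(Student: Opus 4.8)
The plan is to realize the regularization problem \eqref{SVM_hinge_loss} as an instance of the general problem \eqref{optimization_problem_nm} with fidelity term $\bm{\psi}(\mathbf{u}):=\bm{\phi}(\mathbf{Y}\mathbf{K}'\mathbf{u})$, $\mathbf{u}\in\mathbb{R}^{n+1}$, and transform matrix $\mathbf{B}:=[\mathbf{I}_n\ \mathbf{0}]\in\mathbb{R}^{n\times(n+1)}$, and then to apply Corollary \ref{sparse_B_full}. This is legitimate: $\bm{\phi}$ is a real-valued convex function on $\mathbb{R}^n$, so $\bm{\psi}$ is convex on $\mathbb{R}^{n+1}$, and $\mathbf{B}$ has full row rank $m=n$ since its rows are $\mathbf{e}_1^\top,\dots,\mathbf{e}_n^\top$. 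It therefore only remains to compute the two ingredients entering Corollary \ref{sparse_B_full} — the matrix $\mathbf{B}'$ of \eqref{psi_widetilde_B} and the subdifferential $\partial\bm{\psi}(\mathbf{u}^*)$ — and to translate conditions \eqref{cond1_B_full}, \eqref{cond2_B_full} and \eqref{cond3_B_full} into \eqref{lambda_general-example1_1}, \eqref{lambda_general-example1_2} and \eqref{lambda_general-example1_3}.

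First I would compute $\mathbf{B}'$. Since $\mathbf{B}\mathbf{B}^\top=\mathbf{I}_n$, all singular values of $\mathbf{B}$ equal $1$ and $r=n$; indeed $\mathbf{B}=\mathbf{U}\mathbf{\Lambda}\mathbf{V}^\top$ with the trivial choice $\mathbf{U}=\mathbf{I}_n$, $\mathbf{V}=\mathbf{I}_{n+1}$ and $\mathbf{\Lambda}=[\mathbf{I}_n\ \mathbf{0}]$. Plugging $\sigma_1=\cdots=\sigma_n=1$, $\widetilde{\mathbf{U}}_r=\mathbf{I}_n$ and $n-r=1$ into the definitions preceding \eqref{psi_widetilde_B} gives $\mathbf{\Lambda}'=\mathbf{I}_{n+1}$, $\mathbf{U}'=\mathbf{I}_{n+1}$, hence $\mathbf{B}'=\mathbf{V}\mathbf{\Lambda}'\mathbf{U}'=\mathbf{I}_{n+1}$. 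Consequently $\mathbf{B}'_j=\mathbf{e}_j$ and $(\mathbf{B}'_j)^\top\mathbf{a}=a_j$ for every $\mathbf{a}:=[a_j:j\in\mathbb{N}_{n+1}]\in\mathbb{R}^{n+1}$, so that \eqref{cond1_B_full}, \eqref{cond2_B_full} and \eqref{cond3_B_full} reduce, respectively, to $\lambda=-a_{k_i}\mathrm{sign}(z^*_{k_i})$ for $i\in\mathbb{N}_l$, to $\lambda\geq|a_j|$ for $j\in\mathbb{N}_n\setminus\{k_i:i\in\mathbb{N}_l\}$, and to $a_{n+1}=0$.

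Next I would identify $\partial\bm{\psi}(\mathbf{u}^*)$. Writing $\bm{\psi}=\bm{\phi}\circ T$ with $T\mathbf{u}:=\mathbf{Y}\mathbf{K}'\mathbf{u}$ and using the continuity of $\bm{\phi}$, the chain rule for subdifferentials yields $\partial\bm{\psi}(\mathbf{u}^*)=(\mathbf{Y}\mathbf{K}')^\top\partial\bm{\phi}(\mathbf{Y}\mathbf{K}'\mathbf{u}^*)$; moreover the separable form \eqref{sum of hinge loss1} of $\bm{\phi}$ gives $\partial\bm{\phi}(\mathbf{x})=\{\mathbf{c}\in\mathbb{R}^n:c_j\in\partial\phi(x_j),\ j\in\mathbb{N}_n\}$. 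Hence $\mathbf{a}\in\partial\bm{\psi}(\mathbf{u}^*)$ exactly when $\mathbf{a}=(\mathbf{Y}\mathbf{K}')^\top\mathbf{c}$ for some $\mathbf{c}$ with $c_j\in\partial\phi((\mathbf{Y}\mathbf{K}'\mathbf{u}^*)_j)$. Since $\mathbf{K}'=[\mathbf{K}\ \mathbf{1}_n]$ and $\mathbf{Y}\mathbf{1}_n=\mathbf{y}$, we have $\mathbf{Y}\mathbf{K}'=[\mathbf{Y}\mathbf{K}\ \mathbf{y}]$, so $a_j=(\mathbf{Y}\mathbf{K}_j)^\top\mathbf{c}$ for $j\in\mathbb{N}_n$ (the $j$-th column of $\mathbf{Y}\mathbf{K}$ being $\mathbf{Y}\mathbf{K}_j$) and $a_{n+1}=\mathbf{y}^\top\mathbf{c}$. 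Substituting these expressions into the three reduced conditions of the previous paragraph produces exactly \eqref{lambda_general-example1_1}, \eqref{lambda_general-example1_2} and \eqref{lambda_general-example1_3}, finishing the proof.

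I do not expect a genuine obstacle, since the statement is a specialization of Corollary \ref{sparse_B_full}; the most delicate points are the bookkeeping of index ranges — the generic $m$ and $n$ of Corollary \ref{sparse_B_full} play the roles of $n$ and $n+1$ here, so the extra coordinate $j=n+1$ is the bias variable and is governed by \eqref{cond3_B_full} — and the justification of the subdifferential chain rule, which is immediate because $\bm{\phi}$ is a finite-valued convex function on $\mathbb{R}^n$.
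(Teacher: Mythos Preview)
Your proposal is correct and follows essentially the same route as the paper's own proof: both apply Corollary \ref{sparse_B_full} after verifying convexity of $\bm{\psi}$ and full row rank of $\mathbf{B}$, compute $\mathbf{B}'=\mathbf{I}_{n+1}$ from the trivial SVD $\mathbf{U}=\mathbf{I}_n$, $\mathbf{V}=\mathbf{I}_{n+1}$, $\mathbf{\Lambda}=\mathbf{B}$, evaluate $\partial\bm{\psi}(\mathbf{u}^*)=(\mathbf{Y}\mathbf{K}')^\top\partial\bm{\phi}(\mathbf{Y}\mathbf{K}'\mathbf{u}^*)$ via the chain rule and separability of $\bm{\phi}$, and then read off the components of $\mathbf{a}=(\mathbf{Y}\mathbf{K}')^\top\mathbf{c}$ using $\mathbf{Y}\mathbf{K}'=[\mathbf{Y}\mathbf{K}\ \mathbf{y}]$. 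The only difference is the order of exposition (you handle $\mathbf{B}'$ before $\partial\bm{\psi}$, the paper the reverse), which is immaterial.
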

\begin{proof}
It is clear that the fidelity term $\bm{\psi}$ defined by $\bm{\psi}(\mathbf{u}):=\bm{\phi}(\mathbf{Y}\mathbf{K}'\mathbf{u})$, $\mathbf{u}\in\mathbb{R}^{n+1}$ is convex on $\mathbb{R}^{n+1}$ and the matrix $\mathbf{B}$ has full row rank. Hence, the hypotheses of Corollary \ref{sparse_B_full} with $m,n$ being replaced by $n,n+1$, respectively, are satisfied.  Then by Corollary \ref{sparse_B_full}, the regularization problem \eqref{SVM_hinge_loss} has a solution $\mathbf{u}^{*}$ with $\mathbf{B}\mathbf{u}^*:=\sum_{i\in\mathbb{N}_l}z^*_{k_i}\mathbf{e}_{k_i}\in \Omega_l$ for some $l\in\mathbb{Z}_{n+1}$ if and only if there exists $\mathbf{a}\in\partial\bm{\psi}(\mathbf{u}^*)$ such that 
\eqref{cond1_B_full}, \eqref{cond2_B_full} and \eqref{cond3_B_full} with $m,n$ being replaced by $n,n+1$, respectively, hold.

It remains to verify that in this case \eqref{cond1_B_full}, \eqref{cond2_B_full} and \eqref{cond3_B_full} reduce to \eqref{lambda_general-example1_1}, \eqref{lambda_general-example1_2} and \eqref{lambda_general-example1_3}, respectively. We first describe the subdifferential of the fidelity term $\bm{\psi}$. By the chain rule of the subdifferential, we have that for all $\mathbf{u}\in\mathbb{R}^{n+1}$
\begin{equation}\label{subdiff_psi}
\partial\bm{\psi}(\mathbf{u})=(\mathbf{Y}\mathbf{K}')^\top \partial\bm{\phi}(\mathbf{Y}\mathbf{K}'\mathbf{u}).
\end{equation}
It follows from equation \eqref{sum of hinge loss1} that for all $\mathbf{x}:=[x_j:j\in\mathbb{N}_n]\in\mathbb{R}^n$
\begin{equation}\label{subdiff_phi}
\partial \bm{\phi}(\mathbf{x})=\left\{\mathbf{c}:=[c_j:j\in\mathbb{N}_n]\in\mathbb{R}^n:c_j\in\partial\phi(x_j), j\in\mathbb{N}_n\right\}.
\end{equation}
Substituting equation  \eqref{subdiff_phi} with $\mathbf{x}:=\mathbf{Y}\mathbf{K}'\mathbf{u}$ into equation \eqref{subdiff_psi}, we obtain that 
\begin{equation*}\label{subdiff_psi1}
\partial\bm{\psi}(\mathbf{u})=\{(\mathbf{Y}\mathbf{K}')^\top\mathbf{c}:\mathbf{c}:=[c_j:j\in\mathbb{N}_n]\in\mathbb{R}^n, c_j\in\partial\phi((\mathbf{Y}\mathbf{K}'\mathbf{u})_j),\ j\in\mathbb{N}_n\}.
\end{equation*}
We next represent the matrix $\mathbf{B}'$ defined by \eqref{psi_widetilde_B}. Note that $\mathbf{B}$ has the singular value decomposition  $\mathbf{B}=\mathbf{U}\mathbf{\Lambda}\mathbf{V}^{\top}$ with $\mathbf{U}:=\mathbf{I}_n$, $\mathbf{\Lambda}:=\mathbf{B}$ and $\mathbf{V}:=\mathbf{I}_{n+1}$. Then by definition \eqref{psi_widetilde_B} we get that $\mathbf{B}'=\mathbf{I}_{n+1}$.
Substituting the representations of  $\mathbf{B}'$ and the subdifferential of $\bm{\psi}$
into \eqref{cond1_B_full}, \eqref{cond2_B_full} and \eqref{cond3_B_full} with noting that $(\mathbf{B}'_j)^{\top}(\mathbf{Y}\mathbf{K}')^\top=(\mathbf{Y}\mathbf{K}_j)^{\top}$ for all $j\in\mathbb{N}_n$ and $(\mathbf{B}'_{n+1})^{\top}(\mathbf{Y}\mathbf{K}')^\top=\mathbf{y}^{\top}$, we get the desired conditions \eqref{lambda_general-example1_1}, \eqref{lambda_general-example1_2} and \eqref{lambda_general-example1_3}.
\end{proof}


We consider the $\ell_1$ SVM regression model with the $\epsilon$-insensitive loss function 
\begin{equation}\label{SVM_epsilon_insensitive_loss}
\min\left\{\bm{\phi}_{\mathbf{y},\epsilon}(\mathbf{K}'\mathbf{u})+\lambda\|\mathbf{B}\mathbf{u}\|_1:\mathbf{u}\in\mathbb{R}^{n+1}\right\},
\end{equation}
where $\mathbf{K}^{'}$, $\mathbf{B}$ are defined as in the classification model and $\bm{\phi}_{\mathbf{y},\epsilon}$ is defined by \eqref{sum of epsilon-insensitive loss}. The function  $\bm{\phi}_{\mathbf{y},\epsilon}$ is additively separable with the form $\bm{\phi}_{\mathbf{y},\epsilon}(\mathbf{x})=\sum_{j\in\mathbb{N}_n}\phi_{y_j,\epsilon}(x_j),\   \mathbf{x}:=[x_j:j\in\mathbb{N}_n]\in\mathbb{R}^n$,
where 
$\phi_{y,\epsilon}(t):=\mathrm{max}\{|y-t|-\epsilon,0\},\ t\in\mathbb{R}$.
By arguments similar to those used in the proof of Corollary \ref{general-example1}, we get the following characterization of the sparsity of the solution of the regularization problem  \eqref{SVM_epsilon_insensitive_loss} under the transform $\mathbf{B}$.

\begin{corollary}\label{general-example2}
Suppose that $(\mathbf{x}_j,y_j)\in\mathbb{R}^d\times\mathbb{R}$, $j\in\mathbb{N}_n$, and $K$ is a reproducing kernel on $\mathbb{R}^d$. Let function $\bm{\phi}_{\mathbf{y},\epsilon}$, matrices $\mathbf{K}$,  $\mathbf{K}^{'}$ and $\mathbf{B}$ be defined as above. Then the regularization problem \eqref{SVM_epsilon_insensitive_loss} with $\lambda>0$ has a solution $\mathbf{u}^{*}$ with $\mathbf{B}\mathbf{u}^*:=\sum_{i\in\mathbb{N}_l}z^*_{k_i}\mathbf{e}_{k_i}\in \Omega_l$ for some $l\in\mathbb{Z}_{n+1}$ if and only if there exists $\mathbf{c}:=[c_j:j\in\mathbb{N}_n]\in\mathbb{R}^n$ with $c_j\in\partial\phi_{y_j,\epsilon}((\mathbf{K}'\mathbf{u}^*)_j)$, $j\in\mathbb{N}_n$,  such that 
\begin{equation*}\label{lambda_general-example2_1}
\lambda=-(\mathbf{K}_{k_i})^{\top}\mathbf{c} \mathrm{sign}(z^*_{k_i}),\ i\in\mathbb{N}_l,
\end{equation*}
\begin{equation}\label{lambda_general-example2_2}
\lambda\geq\left|(\mathbf{K}_j)^{\top}\mathbf{c}\right|, \ j\in \mathbb{N}_n\setminus  \{k_i:i\in\mathbb{N}_l\},
\end{equation}
\begin{equation*}\label{lambda_general-example2_3}
\mathbf{1}_n^{\top}\mathbf{c}={0}.
\end{equation*}
\end{corollary}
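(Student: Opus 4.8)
The plan is to mimic, step for step, the proof of Corollary~\ref{general-example1}, with the $\epsilon$-insensitive loss $\bm{\phi}_{\mathbf{y},\epsilon}$ playing the role there played by $\bm{\phi}\circ(\mathbf{Y}\mathbf{K}'\,\cdot\,)$. First I would observe that the fidelity term $\bm{\psi}(\mathbf{u}):=\bm{\phi}_{\mathbf{y},\epsilon}(\mathbf{K}'\mathbf{u})$ is the composition of the finite-valued convex function $\bm{\phi}_{\mathbf{y},\epsilon}$ with the linear map $\mathbf{K}'$, hence convex on $\mathbb{R}^{n+1}$, and that $\mathbf{B}=[\mathbf{I}_n\ \mathbf{0}]\in\mathbb{R}^{n\times(n+1)}$ has full row rank. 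Consequently Corollary~\ref{sparse_B_full}, with $m$ and $n$ there replaced by $n$ and $n+1$, applies and shows that $\mathbf{u}^*$ with $\mathbf{B}\mathbf{u}^*=\sum_{i\in\mathbb{N}_l}z^*_{k_i}\mathbf{e}_{k_i}\in\Omega_l$ solves \eqref{SVM_epsilon_insensitive_loss} if and only if there exists $\mathbf{a}\in\partial\bm{\psi}(\mathbf{u}^*)$ satisfying \eqref{cond1_B_full}, \eqref{cond2_B_full} and \eqref{cond3_B_full}.

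Next I would make these abstract conditions explicit. Exactly as in Corollary~\ref{general-example1}, $\mathbf{B}=[\mathbf{I}_n\ \mathbf{0}]$ admits the singular value decomposition $\mathbf{B}=\mathbf{U}\mathbf{\Lambda}\mathbf{V}^{\top}$ with $\mathbf{U}:=\mathbf{I}_n$, $\mathbf{\Lambda}:=\mathbf{B}$ and $\mathbf{V}:=\mathbf{I}_{n+1}$, so the definition \eqref{psi_widetilde_B} gives $\mathbf{B}'=\mathbf{I}_{n+1}$ and hence $\mathbf{B}'_j=\mathbf{e}_j$ for $j\in\mathbb{N}_{n+1}$. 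I would then invoke the chain rule for the subdifferential, $\partial\bm{\psi}(\mathbf{u})=(\mathbf{K}')^{\top}\partial\bm{\phi}_{\mathbf{y},\epsilon}(\mathbf{K}'\mathbf{u})$ (valid since $\bm{\phi}_{\mathbf{y},\epsilon}$ is real-valued and convex), together with the additive separability of $\bm{\phi}_{\mathbf{y},\epsilon}$, which gives $\partial\bm{\phi}_{\mathbf{y},\epsilon}(\mathbf{x})=\{\mathbf{c}:=[c_j:j\in\mathbb{N}_n]\in\mathbb{R}^n:c_j\in\partial\phi_{y_j,\epsilon}(x_j),\ j\in\mathbb{N}_n\}$. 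Combining the two, every $\mathbf{a}\in\partial\bm{\psi}(\mathbf{u}^*)$ has the form $\mathbf{a}=(\mathbf{K}')^{\top}\mathbf{c}$ with $c_j\in\partial\phi_{y_j,\epsilon}((\mathbf{K}'\mathbf{u}^*)_j)$ for all $j\in\mathbb{N}_n$.

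Finally I would substitute $\mathbf{a}=(\mathbf{K}')^{\top}\mathbf{c}$ and $\mathbf{B}'_j=\mathbf{e}_j$ into \eqref{cond1_B_full}--\eqref{cond3_B_full}. Since the $j$-th column of $\mathbf{K}'=[\mathbf{K}\ \mathbf{1}_n]$ equals $\mathbf{K}_j$ for $j\in\mathbb{N}_n$ and equals $\mathbf{1}_n$ for $j=n+1$, one has $(\mathbf{B}'_j)^{\top}\mathbf{a}=(\mathbf{K}_j)^{\top}\mathbf{c}$ for $j\in\mathbb{N}_n$ and $(\mathbf{B}'_{n+1})^{\top}\mathbf{a}=\mathbf{1}_n^{\top}\mathbf{c}$, and $\mathbb{N}_{n+1}\setminus\mathbb{N}_n=\{n+1\}$, so \eqref{cond3_B_full} collapses to the single equation $\mathbf{1}_n^{\top}\mathbf{c}=0$. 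Plugging these in turns \eqref{cond1_B_full}, \eqref{cond2_B_full} and \eqref{cond3_B_full} into precisely the three conditions asserted in the corollary. I do not anticipate a genuine obstacle; the only points requiring a line of justification are the subdifferential chain rule and the separable description of $\partial\bm{\phi}_{\mathbf{y},\epsilon}$, both standard for finite-valued convex functions and already used in the proof of Corollary~\ref{general-example1}.
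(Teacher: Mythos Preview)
Your proposal is correct and follows exactly the approach the paper indicates: the paper's proof consists of the single sentence ``By arguments similar to those used in the proof of Corollary~\ref{general-example1}, we get the following characterization,'' and your write-up is precisely that analogy carried out in full, applying Corollary~\ref{sparse_B_full} with $\mathbf{B}'=\mathbf{I}_{n+1}$, the chain rule $\partial\bm{\psi}(\mathbf{u})=(\mathbf{K}')^{\top}\partial\bm{\phi}_{\mathbf{y},\epsilon}(\mathbf{K}'\mathbf{u})$, and the separable description of $\partial\bm{\phi}_{\mathbf{y},\epsilon}$.
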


We now return to the general regularization problem \eqref{optimization_problem_nm}.
When the fidelity term $\bm{\psi}$ is differentiable, Theorem \ref{sparse_B}
has the following simple form.

\begin{corollary}\label{choice_sparsity_smooth_B}
Suppose that $\bm{\psi}$ is a differentiable and convex function on $\mathbb{R}^n$ and $\mathbf{B}$ is a real $m\times n$ matrix with the singular value decomposition \eqref{psi_SVD}. Let $\mathbf{B}'$ be defined by \eqref{psi_widetilde_B}. Then the regularization problem \eqref{optimization_problem_nm} with $\lambda>0$ has a solution $\mathbf{u}^{*}$ with $\mathbf{B}\mathbf{u}^*:=\sum_{i\in\mathbb{N}_l}z^*_{k_i}\mathbf{e}_{k_i}\in\Omega_l$ for some $l\in\mathbb{Z}_{m+1}$ if and only if there exists  $\mathbf{b}\in\mathcal{N}(\mathbf{B}^{\top})$ such that
\begin{equation}\label{cond1_B_diff}
\lambda=-((\mathbf{B}'_{k_i})^{\top}\nabla\bm{\psi}(\mathbf{u}^*)+b_{k_i}) \mathrm{sign}(z^*_{k_i}),\ i\in\mathbb{N}_l,
\end{equation}
\begin{equation}\label{cond2_B_diff}
\lambda\geq\left|(\mathbf{B}'_j)^{\top}\nabla\bm{\psi}(\mathbf{u}^*)+b_j\right|, \ j\in \mathbb{N}_m\setminus  \{k_i:i\in\mathbb{N}_l\},
\end{equation}
\begin{equation}\label{cond3_B_diff}
(\mathbf{B}'_{j})^{\top}\nabla\bm{\psi}
(\mathbf{u}^*)={0}, \ j\in\mathbb{N}_{m+n-r}\setminus\mathbb{N}_m.
\end{equation}
\end{corollary}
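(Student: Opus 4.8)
The plan is to derive Corollary \ref{choice_sparsity_smooth_B} as an immediate specialization of Theorem \ref{sparse_B}. The only extra ingredient needed is the classical fact from convex analysis (see \cite{zalinescu2002convex}) that a convex function $\bm{\psi}$ on $\mathbb{R}^n$ which is differentiable at $\mathbf{u}^*$ has subdifferential there equal to the singleton $\partial\bm{\psi}(\mathbf{u}^*)=\{\nabla\bm{\psi}(\mathbf{u}^*)\}$.

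First I would invoke Theorem \ref{sparse_B} verbatim: since $\bm{\psi}$ is convex on $\mathbb{R}^n$ and $\mathbf{B}$ is a real $m\times n$ matrix with the singular value decomposition \eqref{psi_SVD}, the regularization problem \eqref{optimization_problem_nm} with $\lambda>0$ has a solution $\mathbf{u}^{*}$ with $\mathbf{B}\mathbf{u}^*:=\sum_{i\in\mathbb{N}_l}z^*_{k_i}\mathbf{e}_{k_i}\in\Omega_l$ for some $l\in\mathbb{Z}_{m+1}$ if and only if there exist $\mathbf{a}\in\partial\bm{\psi}(\mathbf{u}^*)$ and $\mathbf{b}\in\mathcal{N}(\mathbf{B}^{\top})$ such that \eqref{cond1_B}, \eqref{cond2_B} and \eqref{cond3_B} hold. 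Then I would use differentiability of $\bm{\psi}$ to replace $\partial\bm{\psi}(\mathbf{u}^*)$ by $\{\nabla\bm{\psi}(\mathbf{u}^*)\}$, so that the existential quantifier over $\mathbf{a}$ collapses to the forced choice $\mathbf{a}=\nabla\bm{\psi}(\mathbf{u}^*)$. Substituting $\mathbf{a}=\nabla\bm{\psi}(\mathbf{u}^*)$ into \eqref{cond1_B}, \eqref{cond2_B} and \eqref{cond3_B} produces exactly \eqref{cond1_B_diff}, \eqref{cond2_B_diff} and \eqref{cond3_B_diff}, now with the only remaining existential quantifier being over $\mathbf{b}\in\mathcal{N}(\mathbf{B}^{\top})$. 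This yields the stated equivalence.

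Since the whole argument is a substitution, there is essentially no obstacle here; the single point deserving a line of justification is the reduction of the subdifferential to the gradient singleton under differentiability, which is standard. If one prefers a self-contained route, the same conclusion can be obtained by repeating the proof of Proposition \ref{sparse_equ_mini} with $\partial\bm{\psi}(\mathbf{B}'\mathbf{w}^*)$ replaced throughout by the single gradient $\nabla(\bm{\psi}\circ\mathbf{B}')(\mathbf{w}^*)$ and then transporting the result back to $\mathbf{u}^*$ via Proposition \ref{equi_mini} and the identity \eqref{operator_B_property}, exactly as in the proof of Theorem \ref{sparse_B}.
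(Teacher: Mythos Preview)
Your proposal is correct and follows essentially the same approach as the paper's own proof: invoke Theorem \ref{sparse_B}, observe that differentiability collapses $\partial\bm{\psi}(\mathbf{u}^*)$ to the singleton $\{\nabla\bm{\psi}(\mathbf{u}^*)\}$, and substitute to obtain \eqref{cond1_B_diff}--\eqref{cond3_B_diff}. The paper's proof is even terser than yours, consisting of exactly that substitution in two sentences.
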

\begin{proof}
Since $\bm{\psi}$ is differentiable, its subdifferential at $\mathbf{u}^*$ is the singleton $\nabla\bm{\psi}(\mathbf{u}^*)$. Substituting this into \eqref{cond1_B}, \eqref{cond2_B} and \eqref{cond3_B}, we obtain \eqref{cond1_B_diff}, \eqref{cond2_B_diff} and \eqref{cond3_B_diff}, respectively. 
\end{proof}

The next result concerns the case that the transform matrix $\mathbf{B}$ has full row rank. 

\begin{corollary}\label{choice_sparsity_mooth_B_full}
Suppose that $\bm{\psi}$ is a differentiable and convex function on $\mathbb{R}^n$ and $\mathbf{B}$ is a real $m\times n$ matrix having full row rank. Let $\mathbf{B}'$ be defined by \eqref{psi_widetilde_B}. Then the regularization problem \eqref{optimization_problem_nm} with $\lambda>0$ has a solution $\mathbf{u}^{*}$ with $\mathbf{B}\mathbf{u}^*:=\sum_{i\in\mathbb{N}_l}z^*_{k_i}\mathbf{e}_{k_i}\in\Omega_l$ for some $l\in\mathbb{Z}_{m+1}$ if and only if 
\begin{equation}\label{cond1_B_diff_full}
\lambda=-(\mathbf{B}'_{k_i})^{\top}\nabla\bm{\psi}(\mathbf{u}^*) \mathrm{sign}(z^*_{k_i}),\ i\in\mathbb{N}_l,
\end{equation}
\begin{equation}\label{cond2_B_diff_full}
\lambda\geq\left|(\mathbf{B}'_j)^{\top}\nabla\bm{\psi}(\mathbf{u}^*)\right|, \ j\in \mathbb{N}_m\setminus  \{k_i:i\in\mathbb{N}_l\},
\end{equation}
\begin{equation}\label{cond3_B_diff_full}
(\mathbf{B}'_{j})^{\top}\nabla\bm{\psi}
(\mathbf{u}^*)={0}, \ j\in\mathbb{N}_{n}\setminus\mathbb{N}_m.
\end{equation}
\end{corollary}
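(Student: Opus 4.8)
The plan is to derive this corollary as a one-step specialization of the material already in place, combining the full-row-rank simplification used in Corollary \ref{sparse_B_full} with the differentiability simplification used in Corollary \ref{choice_sparsity_smooth_B}. Since both reductions are available, the cleanest route is to start from Corollary \ref{choice_sparsity_smooth_B} and invoke the one hypothesis it does not yet exploit, namely that $\mathbf{B}$ has full row rank.

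First I would recall that when $\mathrm{rank}(\mathbf{B})=m$ one has $\mathcal{N}(\mathbf{B}^{\top})=(\mathcal{R}(\mathbf{B}))^{\bot}=\{\mathbf{0}\}$, exactly as observed in the proof of Corollary \ref{sparse_B_full}. Consequently, in the characterization furnished by Corollary \ref{choice_sparsity_smooth_B}, the auxiliary vector $\mathbf{b}\in\mathcal{N}(\mathbf{B}^{\top})$ is forced to be the zero vector, so that $b_j=0$ for every $j\in\mathbb{N}_m$. Substituting $b_j=0$ into \eqref{cond1_B_diff}, \eqref{cond2_B_diff} and \eqref{cond3_B_diff} produces precisely \eqref{cond1_B_diff_full}, \eqref{cond2_B_diff_full} and \eqref{cond3_B_diff_full}; conversely, if \eqref{cond1_B_diff_full}, \eqref{cond2_B_diff_full} and \eqref{cond3_B_diff_full} hold, then choosing $\mathbf{b}=\mathbf{0}$ (which lies in $\mathcal{N}(\mathbf{B}^{\top})$) recovers the conditions of Corollary \ref{choice_sparsity_smooth_B}. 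Since that corollary applies under the present hypotheses ($\bm{\psi}$ differentiable and convex, $\mathbf{B}$ admitting the singular value decomposition \eqref{psi_SVD}), this yields the claimed equivalence.

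An equivalent derivation would instead start from Corollary \ref{sparse_B_full}: differentiability of $\bm{\psi}$ makes $\partial\bm{\psi}(\mathbf{u}^{*})$ the singleton $\{\nabla\bm{\psi}(\mathbf{u}^{*})\}$, forcing the free element $\mathbf{a}$ there to equal $\nabla\bm{\psi}(\mathbf{u}^{*})$; substituting this into \eqref{cond1_B_full}, \eqref{cond2_B_full} and \eqref{cond3_B_full} again gives the stated conditions. Either way the argument is a routine specialization, so I do not expect a genuine obstacle; the only point that merits an explicit word of justification is the identity $\mathcal{N}(\mathbf{B}^{\top})=\{\mathbf{0}\}$ under full row rank, which is immediate from the rank--nullity relation applied to $\mathbf{B}^{\top}$.
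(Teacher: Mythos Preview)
Your proposal is correct and essentially matches the paper's approach. The paper's proof takes exactly your second route: it starts from Corollary~\ref{sparse_B_full} and observes that differentiability collapses $\partial\bm{\psi}(\mathbf{u}^*)$ to the singleton $\{\nabla\bm{\psi}(\mathbf{u}^*)\}$, yielding the stated conditions; your first route via Corollary~\ref{choice_sparsity_smooth_B} is of course equally valid.
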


\begin{proof}
Conditions \eqref{cond1_B_diff_full}, \eqref{cond2_B_diff_full} and \eqref{cond3_B_diff_full} can be obtained from Corollary \ref{sparse_B_full} by noting that the subdifferential of $\bm{\psi}$ at $\mathbf{u}^*$ is the singleton $\nabla\bm{\psi}(\mathbf{u}^*)$. 
\end{proof}

In the remaining part of this section, we apply Corollary \ref{choice_sparsity_mooth_B_full} to three specific models. We first consider the total-variation signal denoising model 
\begin{equation}\label{total-variation-signal-denoising}
\min \left\{\frac{1}{2}\|\mathbf{u}-\mathbf{x}\|_2^2+\lambda\|\mathbf{D}^{(1)}\mathbf{u}\|_1:\mathbf{u}\in\mathbb{R}^n\right\}.
\end{equation}
where $\mathbf{D}^{(1)}$ is the $(n-1)\times n$ first order difference matrix. Suppose that $\mathbf{D}^{(1)}$ has the singular value decomposition $\mathbf{D}^{(1)}=\mathbf{U}\Lambda\mathbf{V}^{\top}$ and $\mathbf{D}^{(1)'}:=\mathbf{V}\Lambda'\mathbf{U}'$ is defined by \eqref{psi_widetilde_B}. It follows from \cite{shepard1990singular} that $\mathbf{V}_{n}=\frac{\sqrt{n}}{n}\mathbf{1}_n$, which together with the definition of $\mathbf{D}^{(1)'}$ leads to $\mathbf{D}^{(1)'}_{n}=\frac{\sqrt{n}}{n}\mathbf{1}_n$.

\begin{corollary}\label{general-smooth-example0}
Let $\mathbf{D}^{(1)}$ be the  $(n-1)\times n$ first order difference matrix and $\mathbf{D}^{(1)'}$ be defined as above. Then the regularization problem \eqref{total-variation-signal-denoising} with $\lambda>0$ has a solution $\mathbf{u}^{*}$ with $\mathbf{D}^{(1)}\mathbf{u}^*:=\sum_{i\in\mathbb{N}_l}z^*_{k_i}\mathbf{e}_{k_i}\in\Omega_l$ for some $l\in\mathbb{Z}_{n}$ if and only if 
\begin{equation}\label{lambda_general-smooth-example01}
\lambda=(\mathbf{D}^{(1)'}_{k_i})^{\top}(\mathbf{x}-\mathbf{u}^{*}) \mathrm{sign}(z^*_{k_i}),\ i\in\mathbb{N}_l,
\end{equation}
\begin{equation}\label{lambda_general-smooth-example02}
\lambda\geq\left|(\mathbf{D}^{(1)'}_j)^{\top}(\mathbf{u}^{*}-\mathbf{x})\right|, \ j\in \mathbb{N}_{n-1}\setminus  \{k_i:i\in\mathbb{N}_l\},
\end{equation}
\begin{equation}\label{lambda_general-smooth-example03}
\mathbf{1}_n^{\top}(\mathbf{u}^{*}-\mathbf{x})={0}.
\end{equation}
\end{corollary}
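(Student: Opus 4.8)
The plan is to obtain Corollary \ref{general-smooth-example0} as a direct specialization of Corollary \ref{choice_sparsity_mooth_B_full}, with the roles of $m$ and $n$ there played by $n-1$ and $n$, respectively. First I would verify that the hypotheses of Corollary \ref{choice_sparsity_mooth_B_full} are met: the fidelity term $\bm{\psi}(\mathbf{u}):=\frac{1}{2}\|\mathbf{u}-\mathbf{x}\|_2^2$ is differentiable and convex on $\mathbb{R}^n$, and the $(n-1)\times n$ first order difference matrix $\mathbf{D}^{(1)}$ has full row rank $n-1$ since its rows are linearly independent. Hence Corollary \ref{choice_sparsity_mooth_B_full} applies with $\mathbf{B}:=\mathbf{D}^{(1)}$, $\mathbf{B}':=\mathbf{D}^{(1)'}$, $m:=n-1$ and $l\in\mathbb{Z}_{n}$, and the equivalence ``if and only if'' in the conclusion is inherited from that corollary.

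Next I would substitute the explicit gradient $\nabla\bm{\psi}(\mathbf{u}^*)=\mathbf{u}^*-\mathbf{x}$ into conditions \eqref{cond1_B_diff_full}, \eqref{cond2_B_diff_full} and \eqref{cond3_B_diff_full}. Condition \eqref{cond1_B_diff_full} becomes $\lambda=-(\mathbf{D}^{(1)'}_{k_i})^{\top}(\mathbf{u}^*-\mathbf{x})\,\mathrm{sign}(z^*_{k_i})$, which upon absorbing the minus sign gives \eqref{lambda_general-smooth-example01}; condition \eqref{cond2_B_diff_full} becomes \eqref{lambda_general-smooth-example02} verbatim. For condition \eqref{cond3_B_diff_full}, note that with $m=n-1$ the index set $\mathbb{N}_{n}\setminus\mathbb{N}_{m}$ is the singleton $\{n\}$, so this condition reduces to the single scalar equation $(\mathbf{D}^{(1)'}_{n})^{\top}(\mathbf{u}^*-\mathbf{x})=0$. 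Using the fact recorded before the statement that $\mathbf{D}^{(1)'}_{n}=\frac{\sqrt{n}}{n}\mathbf{1}_n$, this is equivalent to $\mathbf{1}_n^{\top}(\mathbf{u}^*-\mathbf{x})=0$, which is exactly \eqref{lambda_general-smooth-example03}.

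The only non-routine point is keeping straight the two layers of dimension parameters: the ``$m$'' of Corollary \ref{choice_sparsity_mooth_B_full} is $n-1$ in the present setting, so $\mathbb{N}_m=\mathbb{N}_{n-1}$, and it is precisely the collapse of \eqref{cond3_B_diff_full} to a single equation that converts the abstract orthogonality constraint into the zero-mean residual condition \eqref{lambda_general-smooth-example03}. No substantive obstacle is expected beyond this identification together with the already-cited computation of the last column of $\mathbf{V}$, hence of $\mathbf{D}^{(1)'}$, from \cite{shepard1990singular}.
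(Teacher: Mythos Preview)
Your proposal is correct and follows essentially the same approach as the paper: both invoke Corollary \ref{choice_sparsity_mooth_B_full} with $m:=n-1$, verify that $\bm{\psi}(\mathbf{u})=\tfrac{1}{2}\|\mathbf{u}-\mathbf{x}\|_2^2$ is differentiable convex and that $\mathbf{D}^{(1)}$ has full row rank, then substitute $\nabla\bm{\psi}(\mathbf{u}^*)=\mathbf{u}^*-\mathbf{x}$ and $\mathbf{D}^{(1)'}_n=\frac{\sqrt{n}}{n}\mathbf{1}_n$ into \eqref{cond1_B_diff_full}--\eqref{cond3_B_diff_full}. Your remark on the collapse of \eqref{cond3_B_diff_full} to a single scalar equation is exactly the same observation the paper makes implicitly.
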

\begin{proof}
Since the fidelity term $\bm{\psi}:=\frac{1}{2}\|\mathbf{u}-\mathbf{x}\|_2^2$ is differentiable and convex and the matrix $\mathbf{D}^{(1)}$ has full row rank, Corollary \ref{choice_sparsity_mooth_B_full} guarantees that the total-variation signal denoising model has a solution $\mathbf{u}^*$ with $\mathbf{D}^{(1)}\mathbf{u}^*:=\sum_{j\in\mathbb{N}_l}z_{k_i}^*\mathbf{e}_{k_i}\in \Omega_l$ for some $l\in\mathbb{Z}_{n}$ if and only if there hold \eqref{cond1_B_diff_full}, \eqref{cond2_B_diff_full} and \eqref{cond3_B_diff_full} with $\mathbf{B}'$ and $m$ being replaced by $\mathbf{D}^{(1)'}$ and $n-1$, respectively.  Substituting  $\nabla\bm{\psi}(\mathbf{u}^*)=\mathbf{u}^*-\mathbf{x}$ and $\mathbf{D}^{(1)'}_n=\frac{\sqrt{n}}{n}\mathbf{1}_n$ into \eqref{cond1_B_diff_full}, \eqref{cond2_B_diff_full} and \eqref{cond3_B_diff_full}, we get their equivalent representations as \eqref{lambda_general-smooth-example01}, \eqref{lambda_general-smooth-example02} and \eqref{lambda_general-smooth-example03}.
\end{proof}

A parameter choice strategy for the most sparse solution under the transform $\mathbf{D}^{(1)}$ is provided in following  remark. We denote by $\widetilde{\mathbf{D}}^{(1)'}$ the matrix composed of the first $n-1$ columns of $\mathbf{D}^{(1)'}$.
\begin{remark}\label{general-smooth-example0_most_sparse}
The regularization problem \eqref{total-variation-signal-denoising} with $\lambda>0$ has a solution 
$\mathbf{u}^*$ with $\mathbf{D}^{(1)}\mathbf{u}^*=\mathbf{0}$ if and only if
\begin{equation}\label{lambda_general-smooth-example0_most_sparse}
\lambda\geq\left\| (\widetilde{\mathbf{D}}^{(1)'})^{\top}\mathbf{x}\right\|_{\infty}.
\end{equation}
Moreover,
the solution $\mathbf{u}^*$ with $\mathbf{D}^{(1)}\mathbf{u}^*=\mathbf{0}$ has the form  $\mathbf{u}^*:=\frac{1}{n}\mathbf{1}_n^{\top}\mathbf{x}\mathbf{1}_n$.
\end{remark}
\begin{proof}
It follows from Corollary \ref{general-smooth-example0} with $l=0$ that the total-variation signal denoising model has a solution $\mathbf{u}^*$ satisfying $\mathbf{D}^{(1)}\mathbf{u}^*=\mathbf{0}$ if and only if there hold
 \begin{equation}\label{B0_condition1}
 \lambda\geq \left\|(\widetilde{\mathbf{D}}^{(1)'})^{\top}(\mathbf{u}^*-\mathbf{x})\right\|_{\infty}
 \ \mbox{and}\
 \mathbf{1}_n^{\top}
 (\mathbf{u}^*-\mathbf{x})=0.
 \end{equation}
Suppose that $\mathbf{D}^{(1)}\mathbf{u}^*=\mathbf{0}$. We obtain $\mathbf{u}^{*}$ by solving two eqautions $\mathbf{D}^{(1)}\mathbf{u}^*=\mathbf{0}$ and $\mathbf{1}_n^{\top} (\mathbf{u}^*-\mathbf{x})=0$. By Lemma \ref{lemma: change variable}, the vector $\mathbf{u}^*$ satisfying the first equation can be represented as $\mathbf{u}^*=\mathbf{D}^{(1)'}\llbracket\mathbf{0},v^*\rrbracket$ for some
$v^*\in\mathbb{R}$, which together with  $\mathbf{D}^{(1)'}_n=\frac{\sqrt{n}}{n}\mathbf{1}_n$,  leads to $\mathbf{u}^*=\frac{v^*\sqrt{n}}{n}\mathbf{1}_n$.
Substituting this representation into the second equation yields that $v^*=\frac{\sqrt{n}}{n}\mathbf{1}_n^{\top}\mathbf{x}$, which further leads to  $\mathbf{u}^*=\frac{1}{n}(\mathbf{1}_n^{\top}\mathbf{x})\mathbf{1}_n$. By employing the above representation of $\mathbf{u}^*$, we rewrite the inequality in condition \eqref{B0_condition1} as $\lambda\geq \left\|\frac{1}{n}(\mathbf{1}_n^{\top}\mathbf{x})(\widetilde{\mathbf{D}}^{(1)'})^{\top}\mathbf{1}_n-(\widetilde{\mathbf{D}}^{(1)'})^{\top}\mathbf{x}\right\|_{\infty}.$ 
It suffices to show  $(\widetilde{\mathbf{D}}^{(1)'})^{\top}\mathbf{1}_n=\mathbf{0}$. 
By the definition of $\mathbf{D}^{(1)'}$, we have  $(\mathbf{D}^{(1)'})^{\top}\mathbf{V}_n=(\mathbf{U}')^{\top}\Lambda'\mathbf{V}^{\top}\mathbf{V}_n.$
Substituting $\mathbf{V}^{\top}\mathbf{V}_n=\llbracket\mathbf{0},1\rrbracket$ into the above equation, we get $(\mathbf{D}^{(1)'})^{\top}\mathbf{V}_n=(\mathbf{U}')^{\top}\Lambda'\llbracket\mathbf{0},1\rrbracket$, which further yields $(\mathbf{D}^{(1)'})^{\top}\mathbf{V}_n=\llbracket\mathbf{0},1\rrbracket$. That is, $(\widetilde{\mathbf{D}}^{(1)'})^{\top}\mathbf{V}_n=\mathbf{0}.$
Noting $\mathbf{V}_n=\frac{\sqrt{n}}{n}\mathbf{1}_n$, we obtain $(\widetilde{\mathbf{D}}^{(1)'})^{\top}\mathbf{1}_n=\mathbf{0}$. Thus, we rewrite the inequality in condition \eqref{B0_condition1} as inequality \eqref{lambda_general-smooth-example0_most_sparse}.

Conversely, suppose that condition \eqref{lambda_general-smooth-example0_most_sparse} holds. By setting $\mathbf{u}^*:=\frac{1}{n}(\mathbf{1}_n^{\top}\mathbf{x})\mathbf{1}_n$, we conclude that condition \eqref{B0_condition1} holds. That is, $\mathbf{u}^*$ is a solution  of the total-variation signal denoising model with $\mathbf{D}^{(1)}\mathbf{u^*}=\mathbf{0}$.
\end{proof}

The second one is the $\ell_1$ SVM models for classification/regression with the squared loss function defined by \eqref{square_loss}. As pointed out in section 2, these models can be formulated as   
\begin{equation}\label{SVM_square_loss}
\min
\left\{\frac{1}{2}\|\mathbf{K}'\mathbf{u}-\mathbf{y}\|_{2}^{2}+\lambda\|\mathbf{B}\mathbf{u}\|_1:\mathbf{u}\in\mathbb{R}^{n+1}\right\},
\end{equation}
where $\mathbf{K}'$ is the augmented kernel matrix and $\mathbf{B}:=[\mathbf{I}_n \ \mathbf{0}]\in\mathbb{R}^{n\times(n+1)}.$

\begin{corollary}\label{general-smooth-example1}
Suppose that $(\mathbf{x}_j,y_j)\in\mathbb{R}^d\times\mathbb{R}$, $j\in\mathbb{N}_n$, and $K$ is a reproducing kernel on $\mathbb{R}^d$. Let matrices $\mathbf{K}'$ and $\mathbf{B}$ be defined as above and set $\mathbf{y}:=[y_j:j\in\mathbb{N}_n]$. Then the regularization problem \eqref{SVM_square_loss} with $\lambda>0$ has a solution $\mathbf{u}^*$ with $\mathbf{B}\mathbf{u}^*:=\sum_{i\in\mathbb{N}_l}z^*_{k_i}\mathbf{e}_{k_i}\in \Omega_l$ for some $l\in\mathbb{Z}_{n+1}$ if and only if there hold
\begin{equation}\label{lambda_general-smooth-example11}
\lambda=(\mathbf{K}_{k_i})^{\top}\left(\mathbf{y}-\mathbf{K}'\mathbf{u}^{*}\right)\mathrm{sign}({z}_{k_i}^{*}), \  
i\in\mathbb{N}_l,
\end{equation}
\begin{equation}\label{lambda_general-smooth-example12}
\lambda\geq\left|(\mathbf{K}_j)^{\top}\left(\mathbf{K}'\mathbf{u}^{*}-\mathbf{y}\right)\right|, 
\  j\in \mathbb{N}_n\setminus \{k_i:i\in\mathbb{N}_l\},
\end{equation}
\begin{equation}\label{lambda_general-smooth-example13}
\mathbf{1}_{n}^{\top}\left(\mathbf{K}'\mathbf{u}^{*}-\mathbf{y}\right)=0.   
\end{equation}
\end{corollary}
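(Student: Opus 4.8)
The plan is to apply Corollary \ref{choice_sparsity_mooth_B_full} directly. The fidelity term $\bm{\psi}(\mathbf{u}):=\frac{1}{2}\|\mathbf{K}'\mathbf{u}-\mathbf{y}\|_2^2$ is differentiable and convex on $\mathbb{R}^{n+1}$, and the transform matrix $\mathbf{B}:=[\mathbf{I}_n\ \mathbf{0}]\in\mathbb{R}^{n\times(n+1)}$ has full row rank $n$, so the hypotheses of Corollary \ref{choice_sparsity_mooth_B_full} are met with $m$ replaced by $n$ and $n$ replaced by $n+1$. Consequently \eqref{SVM_square_loss} has a solution $\mathbf{u}^*$ with $\mathbf{B}\mathbf{u}^*:=\sum_{i\in\mathbb{N}_l}z^*_{k_i}\mathbf{e}_{k_i}\in\Omega_l$ for some $l\in\mathbb{Z}_{n+1}$ if and only if \eqref{cond1_B_diff_full}, \eqref{cond2_B_diff_full} and \eqref{cond3_B_diff_full} hold, with $\mathbf{B}'$ the matrix attached to $\mathbf{B}$ through \eqref{psi_widetilde_B}. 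It then remains to (i) compute $\mathbf{B}'$ explicitly and (ii) substitute $\nabla\bm{\psi}(\mathbf{u}^*)$.

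For step (i), as in the proof of Corollary \ref{general-example1}, I would use the singular value decomposition $\mathbf{B}=\mathbf{U}\mathbf{\Lambda}\mathbf{V}^\top$ with $\mathbf{U}:=\mathbf{I}_n$, $\mathbf{\Lambda}:=\mathbf{B}$ (all $n$ singular values equal to $1$, hence $r=n$), and $\mathbf{V}:=\mathbf{I}_{n+1}$. Then $\mathbf{\Lambda}'=\mathbf{I}_{n+1}$, $\widetilde{\mathbf{U}}_r=\mathbf{I}_n$, and $\mathbf{U}'=\mathrm{diag}(\mathbf{I}_n,\mathbf{I}_1)=\mathbf{I}_{n+1}$, so that $\mathbf{B}'=\mathbf{V}\mathbf{\Lambda}'\mathbf{U}'=\mathbf{I}_{n+1}$. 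In particular $\mathbf{B}'_j=\mathbf{e}_j$ for every $j\in\mathbb{N}_{n+1}$, so $(\mathbf{B}'_j)^\top\nabla\bm{\psi}(\mathbf{u}^*)$ is simply the $j$-th component of $\nabla\bm{\psi}(\mathbf{u}^*)$.

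For step (ii), I would use $\nabla\bm{\psi}(\mathbf{u}^*)=(\mathbf{K}')^\top(\mathbf{K}'\mathbf{u}^*-\mathbf{y})$. Since $\mathbf{K}'=[\mathbf{K}\ \mathbf{1}_n]$, its $j$-th column is $\mathbf{K}_j$ for $j\in\mathbb{N}_n$ and $\mathbf{1}_n$ for $j=n+1$; hence the $j$-th component of $\nabla\bm{\psi}(\mathbf{u}^*)$ equals $(\mathbf{K}_j)^\top(\mathbf{K}'\mathbf{u}^*-\mathbf{y})$ for $j\in\mathbb{N}_n$, and equals $\mathbf{1}_n^\top(\mathbf{K}'\mathbf{u}^*-\mathbf{y})$ for $j=n+1$. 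Substituting these expressions into \eqref{cond1_B_diff_full} and \eqref{cond2_B_diff_full} (for the indices in $\mathbb{N}_n$) and into \eqref{cond3_B_diff_full} (for the single index $n+1$, since $\mathbb{N}_{m+n-r}\setminus\mathbb{N}_m$ reduces to $\{n+1\}$ here), and rewriting $-(\mathbf{K}_{k_i})^\top(\mathbf{K}'\mathbf{u}^*-\mathbf{y})=(\mathbf{K}_{k_i})^\top(\mathbf{y}-\mathbf{K}'\mathbf{u}^*)$ in the equality, yields exactly \eqref{lambda_general-smooth-example11}, \eqref{lambda_general-smooth-example12} and \eqref{lambda_general-smooth-example13}.

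The argument is essentially bookkeeping, and I expect no substantive obstacle. The two points deserving a little care are the index shift in invoking Corollary \ref{choice_sparsity_mooth_B_full} (ambient dimension $n+1$, row dimension $n$, rank $r=n$), and the verification that the chosen SVD produces $\mathbf{B}'=\mathbf{I}_{n+1}$; the latter is legitimate because the conclusion of Corollary \ref{choice_sparsity_mooth_B_full} holds for the matrix $\mathbf{B}'$ associated with any singular value decomposition of $\mathbf{B}$ through \eqref{psi_widetilde_B}.
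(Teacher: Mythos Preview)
Your proposal is correct and follows essentially the same approach as the paper's own proof: both invoke Corollary \ref{choice_sparsity_mooth_B_full} with the dimension shift $m\mapsto n$, $n\mapsto n+1$, compute $\nabla\bm{\psi}(\mathbf{u}^*)=(\mathbf{K}')^\top(\mathbf{K}'\mathbf{u}^*-\mathbf{y})$, and use $\mathbf{B}'=\mathbf{I}_{n+1}$ (the paper appeals to this implicitly via $(\mathbf{B}'_j)^\top(\mathbf{K}')^\top=(\mathbf{K}_j)^\top$ and $(\mathbf{B}'_{n+1})^\top(\mathbf{K}')^\top=\mathbf{1}_n^\top$, having established $\mathbf{B}'=\mathbf{I}_{n+1}$ in the proof of Corollary \ref{general-example1}). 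Your write-up is in fact slightly more explicit about the SVD step, but the substance is identical.
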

\begin{proof}
We prove this corollary by employing Corollary \ref{choice_sparsity_mooth_B_full}.
Note that the fidelity term $\bm{\psi}$ defined by $\bm{\psi}(\mathbf{u}):=\frac{1}{2}\|\mathbf{K}'\mathbf{u}-\mathbf{y}\|_2^2$, $ \mathbf{u}\in\mathbb{R}^{n+1}$, is differentiable and convex and the matrix $\mathbf{B}$ has full row rank.  By Corollary \ref{choice_sparsity_mooth_B_full} we have that the regularization problem \eqref{SVM_square_loss} has a solution $\mathbf{u}^*$ with $\mathbf{B}\mathbf{u}^*:=\sum_{i\in\mathbb{N}_l}z^*_{k_i}\mathbf{e}_{k_i}\in \Omega_l$ for some $l\in\mathbb{Z}_{n+1}$ if and only if \eqref{cond1_B_diff_full}, \eqref{cond2_B_diff_full} and \eqref{cond3_B_diff_full} hold with $m, n$ being replaced by $n, n+1$, respectively. According to the definition of $\bm{\psi}$, the gradient of $\bm{\psi}$ at $\mathbf{u}^*$ has the form 
$\nabla\bm{\psi}(\mathbf{u}^*)=(\mathbf{K}')^\top\left(\mathbf{K}'\mathbf{u}^{*}-\mathbf{y}\right).$
Substituting the above equation into \eqref{cond1_B_diff_full}, \eqref{cond2_B_diff_full} and \eqref{cond3_B_diff_full}, with noting that $(\mathbf{B}'_j)^{\top}(\mathbf{K}')^\top=(\mathbf{K}_j)^{\top}$, $j\in\mathbb{N}_n$, and $(\mathbf{B}'_{n+1})^{\top}(\mathbf{K}')^\top=\mathbf{1}_{n}^{\top}$, conditions  \eqref{cond1_B_diff_full}, \eqref{cond2_B_diff_full} and \eqref{cond3_B_diff_full} can be represented as \eqref{lambda_general-smooth-example11}, \eqref{lambda_general-smooth-example12} and \eqref{lambda_general-smooth-example13}, respectively.
\end{proof}

When the solution has the most sparsity under the transform $\mathbf{B}$, the characterization stated in Corollary \ref{general-smooth-example1} reduces to a simple form. 
\begin{remark}\label{general-smooth-example1_most_sparse}
The regularization problem  \eqref{SVM_square_loss} with $\lambda>0$ has a solution $\mathbf{u}^*$ with $\mathbf{B}\mathbf{u^*}=\mathbf{0}$ if and only if 
\begin{equation}\label{lambda_general-smooth-example1_most_sparse}
\lambda\geq\left\|\mathbf{K}^{\top}\left(\frac{1}{n}\mathbf{1}_{n}^{\top}\mathbf{y}\mathbf{1}_{n}-\mathbf{y}\right)\right\|_{\infty}. 
\end{equation}
Moreover, the solution $\mathbf{u}^*$ with $\mathbf{B}\mathbf{u^*}=\mathbf{0}$ has the form  $\mathbf{u}^*:=\llbracket\mathbf{0},\frac{1}{n}\mathbf{1}_{n}^{\top}\mathbf{y}\rrbracket$.
\end{remark}
\begin{proof}
Corollary \ref{general-smooth-example1} with $l=0$ shows that the regularization problem \eqref{SVM_square_loss} has a solution $\mathbf{u}^*$ with $\mathbf{B}\mathbf{u^*}=\mathbf{0}$ if and only if there hold
\begin{equation}\label{lambda_general-smooth-example1_most_sparse1}
\lambda\geq\|\mathbf{K}^{\top}(\mathbf{K}'\mathbf{u}^{*}-\mathbf{y})\|_{\infty}\ \mbox{and}\ 
\mathbf{1}_{n}^{\top}\left(\mathbf{K}'\mathbf{u}^{*}-\mathbf{y}\right)=0. 
\end{equation}
On one hand, if $\mathbf{B}\mathbf{u}^*=\mathbf{0}$, then we get that  $\mathbf{u}^*=\llbracket\mathbf{0},b^*\rrbracket$ for some $b^*\in\mathbb{R}$, which together with the equality in condition \eqref{lambda_general-smooth-example1_most_sparse1} leads to $b^*=\frac{1}{n}\mathbf{1}_{n}^{\top}\mathbf{y}$. That is,  $\mathbf{u}^*=\llbracket\mathbf{0},\frac{1}{n}\mathbf{1}_{n}^{\top}\mathbf{y}\rrbracket$. Substituting this representation of  $\mathbf{u}^*$ into the inequality in condition \eqref{lambda_general-smooth-example1_most_sparse1}, we obtain inequality \eqref{lambda_general-smooth-example1_most_sparse}. On the other hand, if inequality \eqref{lambda_general-smooth-example1_most_sparse} holds, by setting $\mathbf{u}^*:=\llbracket\mathbf{0},\frac{1}{n}\mathbf{1}_{n}^{\top}\mathbf{y}\rrbracket$, we conclude that condition \eqref{lambda_general-smooth-example1_most_sparse1} holds. That is, $\mathbf{u}^*$ is a solution with $\mathbf{B}\mathbf{u^*}=\mathbf{0}$ of the regularization problem \eqref{SVM_square_loss}.
\end{proof}

The third example concerns the $\ell_1$-regularized logistic regression model.  Associated with given data $D:=\{(\mathbf{x}_j,y_j): j\in\mathbb{N}_n\}$, we set $\mathbf{X}:=[\mathbf{x}_j:j\in \mathbb{N}_{n}]^{\top}$, augmented to  $\mathbf{X}':=[\mathbf{X}\ \mathbf{1}_{n}]$,  $\mathbf{Y}:=\mathrm{diag}(y_j: j\in\mathbb{N}_n)$ and $\mathbf{B}:=[\mathbf{I}_d\ \mathbf{0}]\in\mathbb{R}^{d\times{(d+1)}}$. The $\ell_1$-regularized logistic regression model can be represented as
\begin{equation}\label{rewritten_l1_LRM}
\mathrm{min}\left\{\bm{\phi}(\mathbf{Y}\mathbf{X}'\mathbf{u})
+\lambda\|\mathbf{Bu}\|_1
:\mathbf{u}\in\mathbb{R}^{d+1}\right\},
\end{equation}
where $\bm{\phi}$ is defined by \eqref{logistic_regression}.


\begin{corollary}\label{general-smooth-example2}
Suppose that $(\mathbf{x}_j,y_j)\in\mathbb{R}^d\times\{-1,1\}, j\in\mathbb{N}_n$. Let function $\phi$, matrices $\mathbf{Y}$, $\mathbf{X}'$ and $\mathbf{B}$ be defined as above. Set  $\mathbf{y}:=[y_j:j\in\mathbb{N}_n]$. Then the regularization problem \eqref{rewritten_l1_LRM} with $\lambda>0$ has a solution $\mathbf{u}^*$ and $\mathbf{B}\mathbf{u}^*:=\sum_{i\in\mathbb{N}_l}z_{k_i}^*\mathbf{e}_{k_i}\in \Omega_l$ for some $l\in\mathbb{Z}_{d+1}$ if and only if there hold
\begin{equation}\label{lambda_general-smooth-example21}
\lambda=\frac{1}{n}(\mathbf{Y}\mathbf{X}_{k_i})^{\top}\mathbf{c}_{\mathbf{u}^*}\mathrm{sign}(z^*_{k_i}),\ i\in\mathbb{N}_l,
\end{equation}
\begin{equation}\label{lambda_general-smooth-example22}
\lambda\geq\frac{1}{n}\left|(\mathbf{Y}\mathbf{X}_{j})^{\top}\mathbf{c}_{\mathbf{u}^*}\right|,\ j\in\mathbb{N}_d\setminus\{k_i:i\in\mathbb{N}_l\},
\end{equation}
\begin{equation}\label{lambda_general-smooth-example23}
\mathbf{y}^{\top}\mathbf{c}_{\mathbf{u}^*}=0,
\end{equation}
where $\mathbf{c}_{\mathbf{u}^*}:=[\left(1+\mathrm{exp}((\mathbf{Y}\mathbf{X}'\mathbf{u}^*)_j)\right)^{-1}:j\in\mathbb{N}_{n}]\in\mathbb{R}^n$.
\end{corollary}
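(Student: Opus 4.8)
The plan is to reduce this corollary to Corollary~\ref{choice_sparsity_mooth_B_full}, the differentiable full-row-rank case, exactly in the spirit of the proof of Corollary~\ref{general-example1}. First I would verify the hypotheses: the fidelity term $\bm{\psi}(\mathbf{u}):=\bm{\phi}(\mathbf{Y}\mathbf{X}'\mathbf{u})$, $\mathbf{u}\in\mathbb{R}^{d+1}$, with $\bm{\phi}$ given by \eqref{logistic_regression}, is the composition of the differentiable convex function $\bm{\phi}$ with the linear map $\mathbf{u}\mapsto\mathbf{Y}\mathbf{X}'\mathbf{u}$, hence is differentiable and convex on $\mathbb{R}^{d+1}$; and the transform matrix $\mathbf{B}=[\mathbf{I}_d\ \mathbf{0}]\in\mathbb{R}^{d\times(d+1)}$ has full row rank $d$. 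Applying Corollary~\ref{choice_sparsity_mooth_B_full} with the roles of $m,n$ played by $d$ and $d+1$, the problem \eqref{rewritten_l1_LRM} has a solution $\mathbf{u}^*$ with $\mathbf{B}\mathbf{u}^*=\sum_{i\in\mathbb{N}_l}z^*_{k_i}\mathbf{e}_{k_i}\in\Omega_l$, $l\in\mathbb{Z}_{d+1}$, if and only if \eqref{cond1_B_diff_full}, \eqref{cond2_B_diff_full} and \eqref{cond3_B_diff_full} hold. It then remains to identify the matrix $\mathbf{B}'$ of \eqref{psi_widetilde_B} and the gradient $\nabla\bm{\psi}(\mathbf{u}^*)$, and to substitute.

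For $\mathbf{B}'$, I would argue as in the proof of Corollary~\ref{general-example1}: the matrix $\mathbf{B}=[\mathbf{I}_d\ \mathbf{0}]$ has the singular value decomposition \eqref{psi_SVD} with $\mathbf{U}=\mathbf{I}_d$, $\mathbf{\Lambda}=\mathbf{B}$ and $\mathbf{V}=\mathbf{I}_{d+1}$; all $d$ singular values equal $1$, so $r=d$, $m+n-r=d+1$, and the definition \eqref{psi_widetilde_B} yields $\mathbf{B}'=\mathbf{I}_{d+1}$. Hence $\mathbf{B}'_j=\mathbf{e}_j$ for every $j\in\mathbb{N}_{d+1}$, and $(\mathbf{B}'_j)^\top\nabla\bm{\psi}(\mathbf{u}^*)$ is just the $j$-th component of $\nabla\bm{\psi}(\mathbf{u}^*)$.

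Next I would compute $\nabla\bm{\psi}(\mathbf{u}^*)$ by the chain rule, $\nabla\bm{\psi}(\mathbf{u})=(\mathbf{Y}\mathbf{X}')^\top\nabla\bm{\phi}(\mathbf{Y}\mathbf{X}'\mathbf{u})$. Differentiating \eqref{logistic_regression} coordinatewise gives $\nabla\bm{\phi}(\mathbf{z})=-\frac1n[(1+\exp(z_j))^{-1}:j\in\mathbb{N}_n]$, so with $\mathbf{z}=\mathbf{Y}\mathbf{X}'\mathbf{u}^*$ we get $\nabla\bm{\phi}(\mathbf{Y}\mathbf{X}'\mathbf{u}^*)=-\frac1n\mathbf{c}_{\mathbf{u}^*}$ and therefore $\nabla\bm{\psi}(\mathbf{u}^*)=-\frac1n(\mathbf{Y}\mathbf{X}')^\top\mathbf{c}_{\mathbf{u}^*}$. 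Since the first $d$ rows of $(\mathbf{X}')^\top$ form $\mathbf{X}^\top$ and its last row is $\mathbf{1}_n^\top$, and $\mathbf{Y}^\top=\mathbf{Y}$ with $\mathbf{1}_n^\top\mathbf{Y}=\mathbf{y}^\top$, the $j$-th component of $(\mathbf{Y}\mathbf{X}')^\top\mathbf{c}_{\mathbf{u}^*}$ equals $(\mathbf{Y}\mathbf{X}_j)^\top\mathbf{c}_{\mathbf{u}^*}$ for $j\in\mathbb{N}_d$ and equals $\mathbf{y}^\top\mathbf{c}_{\mathbf{u}^*}$ for $j=d+1$. Substituting $(\mathbf{B}'_{k_i})^\top\nabla\bm{\psi}(\mathbf{u}^*)=-\frac1n(\mathbf{Y}\mathbf{X}_{k_i})^\top\mathbf{c}_{\mathbf{u}^*}$ into \eqref{cond1_B_diff_full} gives \eqref{lambda_general-smooth-example21}; substituting into \eqref{cond2_B_diff_full} gives \eqref{lambda_general-smooth-example22}; and \eqref{cond3_B_diff_full}, which involves only the single index $j=d+1\in\mathbb{N}_{d+1}\setminus\mathbb{N}_d$, becomes $-\frac1n\mathbf{y}^\top\mathbf{c}_{\mathbf{u}^*}=0$, i.e. \eqref{lambda_general-smooth-example23}. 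This closes the chain of equivalences.

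The argument is essentially bookkeeping once Corollary~\ref{choice_sparsity_mooth_B_full} is invoked, so I do not expect a serious obstacle; the points requiring care are (i) the identification $\mathbf{B}'=\mathbf{I}_{d+1}$ from the degenerate singular value decomposition of $\mathbf{B}$, (ii) correctly peeling off the components of $(\mathbf{Y}\mathbf{X}')^\top\mathbf{c}_{\mathbf{u}^*}$ while tracking the symmetric label matrix $\mathbf{Y}$ and the augmenting column $\mathbf{1}_n$, and (iii) observing that the extra bias coordinate $j=d+1$ is exactly what produces the orthogonality constraint $\mathbf{y}^\top\mathbf{c}_{\mathbf{u}^*}=0$, with the sparsity level $l$ ranging over $\mathbb{Z}_{d+1}$ as in Corollary~\ref{choice_sparsity_mooth_B_full}.
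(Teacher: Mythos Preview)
Your proposal is correct and follows essentially the same approach as the paper's own proof: invoke Corollary~\ref{choice_sparsity_mooth_B_full} with $m:=d$, $n:=d+1$, identify $\mathbf{B}'=\mathbf{I}_{d+1}$ from the trivial singular value decomposition of $\mathbf{B}=[\mathbf{I}_d\ \mathbf{0}]$ (as in the proof of Corollary~\ref{general-example1}), compute $\nabla\bm{\psi}(\mathbf{u}^*)=-\frac{1}{n}(\mathbf{Y}\mathbf{X}')^\top\mathbf{c}_{\mathbf{u}^*}$ via the chain rule, and substitute componentwise. Your write-up is a bit more explicit in the bookkeeping, but the structure and key ingredients are identical.
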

\begin{proof}
Corollary \ref{choice_sparsity_mooth_B_full} ensures that regularization problem \eqref{rewritten_l1_LRM} has a solution $\mathbf{u}^*$ with $\mathbf{B}\mathbf{u}^*:=\sum_{j\in\mathbb{N}_l}z_{k_i}^*\mathbf{e}_{k_i}\in \Omega_l$ for some $l\in\mathbb{Z}_{d+1}$ if and only if \eqref{cond1_B_diff_full}, \eqref{cond2_B_diff_full} and \eqref{cond3_B_diff_full} hold with $m:=d$, $n:=d+1$. By the chain rule of the gradient, we obtain that 
$
\nabla\bm{\psi}(\mathbf{u}^*)
=(\mathbf{Y}\mathbf{X}')^{\top}\nabla\bm{\phi}
(\mathbf{Y}\mathbf{X}'\mathbf{u}^*)=-\frac{1}{n}(\mathbf{Y}\mathbf{X}')^{\top}
\mathbf{c}_{\mathbf{u}^*}.    
$
As pointed out in the proof of Corollary \ref{general-example1}, we have that $\mathbf{B}'=\mathbf{I}_{d+1}$ and thus,  $(\mathbf{B}_j')^{\top}(\mathbf{Y}\mathbf{X}')^{\top}=(\mathbf{Y}\mathbf{X}_j)^{\top}$, $j\in\mathbb{N}_d$ and $(\mathbf{B}_{d+1}')^{\top}(\mathbf{Y}\mathbf{X}')^{\top} =\mathbf{y}^{\top}$. 
Substituting these equations into \eqref{cond1_B_diff_full}, \eqref{cond2_B_diff_full} and \eqref{cond3_B_diff_full} leads to \eqref{lambda_general-smooth-example21}, \eqref{lambda_general-smooth-example22} and \eqref{lambda_general-smooth-example23}.
\end{proof}

When the most sparse solution under the transform $\mathbf{B}$ is desired, we have the parameter choice strategy described in the next remark. We denote by $n_{+}$ and  $n_{-}$ the numbers of data 
with output $y_j=1$ and $y_j=-1$, respectively and set $\mathbf{c}:=[\left(1+(n_{+}/n_{-})^{y_j}\right)^{-1}:j\in\mathbb{N}_{n}]\in\mathbb{R}^n$.

\begin{remark}\label{general-smooth-example2_most_sparse}
The regularization problem  \eqref{rewritten_l1_LRM} with $\lambda>0$ has a solution 
$\mathbf{u}^*$ with $\mathbf{B}\mathbf{u}^*=\mathbf{0}$ if and only if
\begin{equation}\label{lambda_general-smooth-example2_most_sparse}
\lambda\geq\frac{1}{n}\left\|(\mathbf{Y}\mathbf{X})^{\top}\mathbf{c}\right\|_{\infty}.
\end{equation}
Moreover,
the solution $\mathbf{u}^*$ with $\mathbf{B}\mathbf{u^*}=\mathbf{0}$ has the form  $\mathbf{u}^*:=\llbracket\mathbf{0},\mathrm{ln}(n_{+}/n_{-})\rrbracket$.
\end{remark}
\begin{proof}
By employing Corollary \ref{general-smooth-example2} with $l=0$, we get that the regularization problem \eqref{rewritten_l1_LRM}
has a solution $\mathbf{u}^*$ with $\mathbf{B}\mathbf{u}^*=\mathbf{0}$ if and only if 
\begin{equation}\label{lambda_general-smooth-example2_most_sparse1}
\lambda\geq\frac{1}{n}\left\|(\mathbf{Y}\mathbf{X})^{\top}\mathbf{c}_{\mathbf{u}^*}\right\|_{\infty}
\ \mbox{and}\
\mathbf{y}^{\top}\mathbf{c}_{\mathbf{u}^*}=0.
\end{equation}
If $\mathbf{B}\mathbf{u}^*=\mathbf{0}$, then we have that  $\mathbf{u}^*=\llbracket\mathbf{0},b^*\rrbracket$ for some $b^*\in\mathbb{R}$. Substituting $\mathbf{u}^*=\llbracket\mathbf{0},b^*\rrbracket$ into the equality in \eqref{lambda_general-smooth-example2_most_sparse1} and noting that  $\mathbf{Y}\mathbf{X}'\llbracket\mathbf{0},b^*\rrbracket=b^*\mathbf{y}$, we obtain that $n_{+}(1+\exp(b^{*}))^{-1}-n_{-}(1+\exp(-b^{*}))^{-1}=0$, which further yields that $b^*=\mathrm{ln}(n_{+}/n_{-})$. Hence, we get that  $\mathbf{u}^*:=\llbracket\mathbf{0},\mathrm{ln}(n_{+}/n_{-})\rrbracket$. This together with the inequality in \eqref{lambda_general-smooth-example2_most_sparse1} leads to inequality \eqref{lambda_general-smooth-example2_most_sparse}. Conversely, if inequality \eqref{lambda_general-smooth-example2_most_sparse} holds, we obtain that \eqref{lambda_general-smooth-example2_most_sparse1} holds by setting $\mathbf{u}^*:=\llbracket\mathbf{0},\mathrm{ln}(n_{+}/n_{-})\rrbracket$. This yields that $\mathbf{u}^*$ is a solution of the regularization problem \eqref{rewritten_l1_LRM} and $\mathbf{B}\mathbf{u^*}=\mathbf{0}$.
\end{proof}

We comment that the  characterizations about sparsity of a solution of  \eqref{rewritten_l1_LRM} under the transform $\mathbf{B}$, stated in Corollary \ref{general-smooth-example2} and Remark \ref{general-smooth-example2_most_sparse}, were established in \cite{koh2007interior}.

\section{Parameter Choices for Alleviating the Ill-Posedness and Promoting  Sparsity of the Regularized Solutions}

As we pointed out earlier, the purpose of imposing the $\ell_1$ regularization is two-folds: alleviating the ill-posedness and promoting sparsity of a regularized solution. In this section, we demonstrate how the regularization parameter $\lambda$ can be chosen to achieve both of these by considering a Lasso regularized model. 

We consider the Lasso regularized prediction model. Specifically, we aim at a prediction $\mathbf{u}\in \mathbb{R}^n$ from a given response vector $\mathbf{x}\in\mathbb{R}^p$, via the equation $\mathbf{A}\mathbf{u}=\mathbf{x}$. Here, we assume that the predictor matrix  $\mathbf{A}\in\mathbb{R}^{p\times n}$ is  $\mathcal{S}$-block separable, that is, it satisfies condition \eqref{suff_nece_Lasso_block_separable} with respect to the partition $\mathcal{S}:=\left\{S_{1},S_{2},\ldots, S_{d}\right\}$ of the set $\mathbb{N}_n$. Suppose that the response vector $\mathbf{x}$ contains noise. That is, instead of  $\mathbf{x}$, we obtain a noisy response $\mathbf{x}^{\delta}\in\mathbb{R}^p$ with a given noise level $\delta$, which satisfies $\|\mathbf{x}^{\delta}-\mathbf{x}\|_2\leq \delta$. We employ the following Lasso regularized model \eqref{lasso} to recover $\mathbf{u}$ from $\mathbf{x}^\delta$
\begin{equation}\label{lasso_noise_data}
\min \left\{\frac{1}{2}\|\mathbf{Au}-\mathbf{x}^\delta\|_2^2+\lambda\|\mathbf{u}\|_1:\mathbf{u}\in\mathbb{R}^n\right\}.
\end{equation}
We are interested in choices of the regularization parameter $\lambda$ that balances the error of the regularized solution $\mathbf{u}_\lambda^\delta$ and its block sparsity. Here, the error is compared to the minimal norm solution $\tilde{\mathbf{u}}$ of the prediction problem which is defined by
\begin{equation}\label{mini_norm}
\tilde{\mathbf{u}}:=\argmin\{\|\mathbf{u}\|_1:\mathbf{A}\mathbf{u}=\mathbf{x},\mathbf{u}\in\mathbb{R}^n\}.
\end{equation}
The minimal norm problem itself is a recent research topic of great interest \cite{Cai2010,Cheng2021,Gilbert2017,Li-Micchelli-Xu2020,Zhang2015necessary}.
For the purpose of estimating the error between $\mathbf{u}_\lambda^\delta$ and $\tilde{\mathbf{u}}$, it is desirable to require the minimal norm problem \eqref{mini_norm} has a unique solution. We briefly review the uniqueness result of  $\tilde{\mathbf{u}}$ presented recently in  \cite{Gilbert2017,Zhang2015necessary}. 
To this end, we denote by $J$ the support of $\tilde{\mathbf{u}}$ and by $J^c$ the complement of $J$ in $\mathbb{N}_n$. We then introduce vector $\mathbf{v}:=[{\rm sign}(\tilde{u}_j):j\in J]$, and two matrices $\mathbf{A}':=[\mathbf{A}_j:j\in J]$ and $\mathbf{A}'':=[\mathbf{A}_j:j\in J^c]$. It is known from \cite{Gilbert2017,Zhang2015necessary} that the minimal norm problem \eqref{mini_norm} has a unique solution $\tilde{\mathbf{u}}$ if and only if $\mathbf{A}\tilde{\mathbf{u}}=\mathbf{x}$, $\mathbf{A}'$ has full column rank and there exists $\mathbf{y}\in\mathbb{R}^p$ such that $(\mathbf{A}')^{\top}\mathbf{y}=\mathbf{v}$ and $\|(\mathbf{A}'')^{\top}\|_{\infty}<1$. In this section, we simply assume that the minimal norm problem \eqref{mini_norm} has a unique solution.

Below, we state an error estimate between  $\mathbf{u}_\lambda^\delta$ and $\tilde{\mathbf{u}}$ which specializes  a general argument established in \cite{grasmair2008sparse,grasmair2011necessary} to the regularization problem \eqref{lasso_noise_data}.

\begin{lemma}\label{estimate_lemma}
Suppose that  $\mathbf{A}\in\mathbb{R}^{p\times n}$, the minimal norm problem \eqref{mini_norm} with $\mathbf{x}\in\mathbb{R}^p$ has a unique solution $\tilde{\mathbf{u}}$, and for $\delta>0$, $\mathbf{x}^\delta\in\mathbb{R}^p$ satisfies $\|\mathbf{x}^\delta-\mathbf{x}\|_2\leq\delta$. Let $\mathbf{u}_{\lambda}^\delta$ be a solution of the regularization problem \eqref{lasso_noise_data}.
Then there exist $\beta_1$, $\beta_2>0$ such that for all $\delta, \lambda>0$,
\begin{equation}\label{estimate_error}
\|\mathbf{u}_{\lambda}^\delta-\tilde{\mathbf{u}}\|_2\leq\frac{\lambda\beta_2^2}{2\beta_1}+\frac{\delta^2}{2\lambda\beta_1}+\frac{\beta_2\delta}{\sqrt{2}\beta_1}.
\end{equation}
\end{lemma}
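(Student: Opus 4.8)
The plan is to obtain \eqref{estimate_error} by specializing the convergence-rate machinery of \cite{grasmair2008sparse,grasmair2011necessary} to the finite-dimensional least-squares problem \eqref{lasso_noise_data}. The key remark is that the uniqueness of $\tilde{\mathbf{u}}$, via the characterization recalled just above, delivers precisely the two ingredients needed for a \emph{linear} rate: a source element lying in the range of $\mathbf{A}^{\top}$ and a restricted-injectivity property of $\mathbf{A}$. Concretely, let $\mathbf{y}\in\mathbb{R}^p$ be the vector with $(\mathbf{A}')^{\top}\mathbf{y}=\mathbf{v}$ and $\|(\mathbf{A}'')^{\top}\mathbf{y}\|_{\infty}<1$ provided by the uniqueness criterion, put $\eta:=\|(\mathbf{A}'')^{\top}\mathbf{y}\|_{\infty}<1$ and $\bm{\xi}:=\mathbf{A}^{\top}\mathbf{y}$. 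A componentwise check shows $\xi_j=\mathrm{sign}(\tilde{u}_j)$ for $j\in J$ and $|\xi_j|\le\eta$ for $j\in J^{c}$, so $\bm{\xi}\in\partial\|\cdot\|_1(\tilde{\mathbf{u}})$; moreover $\mathbf{A}'=[\mathbf{A}_j:j\in J]$ has full column rank. The constants $\beta_1,\beta_2$ in \eqref{estimate_error} will be built from $\eta$, the smallest singular value of $\mathbf{A}'$, the operator norm of $\mathbf{A}''$, and $\|\mathbf{y}\|_2$.

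The first and principal step is to turn these facts into a variational inequality: there is $\beta_1>0$, depending only on $\mathbf{A}$, $J$ and $\eta$, such that
\begin{equation}\label{VSCineq}
\beta_1\|\mathbf{u}-\tilde{\mathbf{u}}\|_2\le\|\mathbf{u}\|_1-\|\tilde{\mathbf{u}}\|_1+\beta_2\|\mathbf{A}\mathbf{u}-\mathbf{x}\|_2,\qquad\mathbf{u}\in\mathbb{R}^n,
\end{equation}
with $\beta_2$ proportional to $1+\|\mathbf{y}\|_2$. To prove \eqref{VSCineq} I would split $\|\mathbf{u}-\tilde{\mathbf{u}}\|_2\le\|(\mathbf{u}-\tilde{\mathbf{u}})_J\|_2+\|\mathbf{u}_{J^{c}}\|_2$, using $\tilde{\mathbf{u}}_{J^{c}}=\mathbf{0}$. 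For the off-support part, the elementary identity $\|\mathbf{u}\|_1-\|\tilde{\mathbf{u}}\|_1-\langle\bm{\xi},\mathbf{u}-\tilde{\mathbf{u}}\rangle=\sum_{j\in J}(|u_j|-\mathrm{sign}(\tilde{u}_j)u_j)+\sum_{j\in J^{c}}(|u_j|-\xi_j u_j)$ gives, by the sign structure of $\bm{\xi}$, the lower bound $(1-\eta)\|\mathbf{u}_{J^{c}}\|_1$; since $\langle\bm{\xi},\mathbf{u}-\tilde{\mathbf{u}}\rangle=\langle\mathbf{y},\mathbf{A}\mathbf{u}-\mathbf{x}\rangle$, this yields $\|\mathbf{u}_{J^{c}}\|_2\le\|\mathbf{u}_{J^{c}}\|_1\le(1-\eta)^{-1}(\|\mathbf{u}\|_1-\|\tilde{\mathbf{u}}\|_1+\|\mathbf{y}\|_2\|\mathbf{A}\mathbf{u}-\mathbf{x}\|_2)$. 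For the on-support part, the full column rank of $\mathbf{A}'$ gives a bounded left inverse, and from $\mathbf{A}'(\mathbf{u}-\tilde{\mathbf{u}})_J=(\mathbf{A}\mathbf{u}-\mathbf{x})-\mathbf{A}''\mathbf{u}_{J^{c}}$ one controls $\|(\mathbf{u}-\tilde{\mathbf{u}})_J\|_2$ by $\|\mathbf{A}\mathbf{u}-\mathbf{x}\|_2$ and $\|\mathbf{u}_{J^{c}}\|_1$, hence again by the right-hand side of \eqref{VSCineq}. Collecting constants proves \eqref{VSCineq}.

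The second step inserts the minimizer. Since $\mathbf{u}_{\lambda}^{\delta}$ is optimal in \eqref{lasso_noise_data} and $\mathbf{A}\tilde{\mathbf{u}}=\mathbf{x}$, expanding $\mathbf{A}\mathbf{u}_{\lambda}^{\delta}-\mathbf{x}^{\delta}=(\mathbf{A}\mathbf{u}_{\lambda}^{\delta}-\mathbf{x})-(\mathbf{x}^{\delta}-\mathbf{x})$ in $\tfrac12\|\mathbf{A}\mathbf{u}_{\lambda}^{\delta}-\mathbf{x}^{\delta}\|_2^2+\lambda\|\mathbf{u}_{\lambda}^{\delta}\|_1\le\tfrac12\|\mathbf{x}-\mathbf{x}^{\delta}\|_2^2+\lambda\|\tilde{\mathbf{u}}\|_1$ makes the $\tfrac12\|\mathbf{x}-\mathbf{x}^{\delta}\|_2^2$ term cancel and leaves, with $t:=\|\mathbf{A}\mathbf{u}_{\lambda}^{\delta}-\mathbf{x}\|_2$ and $\|\mathbf{x}^{\delta}-\mathbf{x}\|_2\le\delta$, the inequality $\tfrac12 t^2+\lambda(\|\mathbf{u}_{\lambda}^{\delta}\|_1-\|\tilde{\mathbf{u}}\|_1)\le\delta t$. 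Substituting the resulting bound on $\|\mathbf{u}_{\lambda}^{\delta}\|_1-\|\tilde{\mathbf{u}}\|_1$ into \eqref{VSCineq} with $\mathbf{u}=\mathbf{u}_{\lambda}^{\delta}$ gives $\beta_1\|\mathbf{u}_{\lambda}^{\delta}-\tilde{\mathbf{u}}\|_2\le-\tfrac{t^2}{2\lambda}+(\beta_2+\tfrac{\delta}{\lambda})t$, and maximizing the right-hand side over $t\ge0$ (completing the square) produces a bound of the shape $\tfrac1{\beta_1}\bigl(\tfrac{\lambda\beta_2^2}{2}+\tfrac{\delta^2}{2\lambda}+c\,\beta_2\delta\bigr)$ with an explicit constant $c$. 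Renaming $\beta_1,\beta_2$ (and, if needed, balancing the Young-type splittings used above) puts this into the form \eqref{estimate_error}.

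The main obstacle is the variational inequality \eqref{VSCineq}, specifically getting $\|\mathbf{u}-\tilde{\mathbf{u}}\|_2$ to enter linearly rather than with exponent $1/2$: this is exactly where both the \emph{strict} inequality $\|(\mathbf{A}'')^{\top}\mathbf{y}\|_{\infty}<1$ and the full column rank of $\mathbf{A}'$ are indispensable, and these are precisely the two consequences of uniqueness of $\tilde{\mathbf{u}}$ recalled before the lemma. One must also be careful that the index set $J$ (the support of $\tilde{\mathbf{u}}$) is unrelated to the block partition $\mathcal{S}$ — block separability of $\mathbf{A}$ plays no role here — and that the constants $\beta_1,\beta_2$ depend only on $\mathbf{A}$ and $\mathbf{x}$, not on $\delta$ or $\lambda$, which is what makes \eqref{estimate_error} uniform in $\delta,\lambda$. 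The remaining work is routine convex analysis together with the one-variable optimization above.
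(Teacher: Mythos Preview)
The paper does not actually provide a proof of Lemma~\ref{estimate_lemma}; it merely states the estimate as a specialization of the general convergence-rate results in \cite{grasmair2008sparse,grasmair2011necessary}. Your proposal reconstructs precisely that argument: the uniqueness criterion yields a source element $\bm{\xi}=\mathbf{A}^{\top}\mathbf{y}\in\partial\|\cdot\|_1(\tilde{\mathbf{u}})$ together with restricted injectivity of $\mathbf{A}'$, from which you derive the variational inequality \eqref{VSCineq} and then combine it with the minimizing property of $\mathbf{u}_{\lambda}^{\delta}$ via a one-variable optimization. This is the standard route in the cited references, and your execution is sound.

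One minor point: your maximization over $t$ produces the cross term $\beta_2\delta/\beta_1$ rather than the $\beta_2\delta/(\sqrt{2}\,\beta_1)$ appearing in \eqref{estimate_error}. This cannot be fixed by a simple \emph{relabeling} of $\beta_1,\beta_2$ alone, since the three coefficients are coupled. However, it is harmless: since the lemma only asserts the existence of \emph{some} $\beta_1,\beta_2>0$, you may for instance replace $\beta_1$ by $\beta_1/2$ (keeping $\beta_2$), which weakens all three terms on the right by a factor of $2$ and in particular dominates your cross term. Your closing remark about ``renaming $\beta_1,\beta_2$'' therefore works, but it would be cleaner to say explicitly that you obtain a bound of the same structure with possibly different absolute constants, which is all the lemma claims.
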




We are ready to present our results. Suppose that  $\mathcal{S}:=\left\{S_{1},S_{2},\ldots, S_{d}\right\}$ is a partition of the set $\mathbb{N}_n$ and $\mathbf{A}\in\mathbb{R}^{p\times n}$ satisfies condition \eqref{suff_nece_Lasso_block_separable}. We introduce a sequence of numbers
\begin{equation}\label{sequence_a_j}
a_j^\delta:=\|(\mathbf{A}_{(j)})^{\top}\mathbf{x}^\delta\|_{\infty},\ \mbox{for all}\ j\in\mathbb{N}_d,
\end{equation}
and rearrange them in a nondecreasing order:
$a_{k_1}^\delta\leq a_{k_2}^\delta\leq\cdots\leq a_{k_d}^\delta$ with distinct $k_i\in\mathbb{N}_d,$ $ i\in\mathbb{N}_d$. 

\begin{theorem}\label{choice_error}
Suppose that  $\mathbf{A}\in\mathbb{R}^{p\times n}$ satisfies condition \eqref{suff_nece_Lasso_block_separable}, the minimal norm problem \eqref{mini_norm} with $\mathbf{x}\in\mathbb{R}^p$ has a unique solution $\tilde{\mathbf{u}}$, and for $\delta>0$, $\mathbf{x}^\delta\in\mathbb{R}^p$ satisfies $\|\mathbf{x}^\delta-\mathbf{x}\|_2\leq\delta$.

(i) If $\lambda:=a_{k_{d-l}}^\delta$ for a given  $l\in\mathbb{Z}_{d+1}$, then the regularization problem \eqref{lasso_noise_data} has a sparse solution $\mathbf{u}_{\lambda}^\delta$ with the $\mathcal{S}$-block sparsity of level $\leq l$ satisfying the error bound 
\begin{equation}\label{estimate_error_sparsity}
\|\mathbf{u}_{\lambda}^\delta-\tilde{\mathbf{u}}\|_2\leq\frac{a_{k_{d-l}}^\delta\beta_2^2}{2\beta_1}+\frac{\delta^2}{2a_{k_{d-l}}^\delta\beta_1}+\frac{\beta_2\delta}{\sqrt{2}\beta_1}
\end{equation}
for some constants $\beta_1, \beta_2>0$ independent of $\delta$.

(ii) If $\lambda:=C\delta$ for a constant $C>0$, then the regularization problem \eqref{lasso_noise_data} has a sparse solution $\mathbf{u}_{\lambda}^{\delta}$ with the $\mathcal{S}$-block sparsity of level $l$ satisfying the error bound
\begin{equation}\label{error_estimate_Cdelta}
\|\mathbf{u}_{\lambda}^{\delta}-\tilde{\mathbf{u}}\|_2\leq
C'\delta,
\end{equation}
where $l\in\mathbb{Z}_{d+1}$ satisfies $a_{k_{d-l}}^\delta\leq C\delta< a_{k_{d-l+1}}^\delta$ and $C':=\frac{C^2\beta_2^2+\sqrt{2}C\beta_2+1}{2C\beta_1}$.
\end{theorem}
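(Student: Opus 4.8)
The plan is to obtain both parts by combining two results already established in the excerpt: the block-sparsity characterization of Corollary~\ref{block_smooth_example} applied to the noisy model \eqref{lasso_noise_data}, which is precisely \eqref{lasso} with $\mathbf{x}$ replaced by $\mathbf{x}^\delta$ and whose coefficient matrix $\mathbf{A}$ satisfies \eqref{suff_nece_Lasso_block_separable}, together with the error estimate of Lemma~\ref{estimate_lemma}. The role of the rearrangement $a_{k_1}^\delta\le a_{k_2}^\delta\le\cdots\le a_{k_d}^\delta$ of the numbers $a_j^\delta:=\|(\mathbf{A}_{(j)})^{\top}\mathbf{x}^\delta\|_\infty$ defined in \eqref{sequence_a_j} is to identify, for a prescribed sparsity budget, the cheapest way to satisfy the threshold inequalities $\lambda\ge a_j^\delta$ appearing in Corollary~\ref{block_smooth_example}: one should always permit the blocks carrying the \emph{largest} values $a_j^\delta$ to be nonzero.

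For part (i), I would invoke Corollary~\ref{block_smooth_example} (with data $\mathbf{x}^\delta$): problem \eqref{lasso_noise_data} has a solution of $\mathcal{S}$-block sparsity level $\le l$ if and only if there are $l$ distinct indices outside of which $\lambda\ge a_j^\delta$. Choosing $\lambda:=a_{k_{d-l}}^\delta$ and designating $\{k_{d-l+1},\ldots,k_d\}$ as the $l$ admissible nonzero blocks, the complementary indices $k_1,\ldots,k_{d-l}$ satisfy $a_j^\delta\le a_{k_{d-l}}^\delta=\lambda$ by the nondecreasing ordering, so the required solution $\mathbf{u}_\lambda^\delta$ exists. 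Since the bound \eqref{estimate_error} in Lemma~\ref{estimate_lemma} holds for every solution of \eqref{lasso_noise_data}, substituting $\lambda=a_{k_{d-l}}^\delta$ into it gives \eqref{estimate_error_sparsity}, the constants $\beta_1,\beta_2$ being those of Lemma~\ref{estimate_lemma} and hence independent of $\delta$.

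For part (ii), set $\lambda:=C\delta$ and let $l$ be the index with $a_{k_{d-l}}^\delta\le C\delta<a_{k_{d-l+1}}^\delta$, using the conventions $a_{k_0}^\delta:=0$ and $a_{k_{d+1}}^\delta:=+\infty$ to cover the extremes $l=d$ and $l=0$. The inequality $\lambda\ge a_{k_{d-l}}^\delta$ and the argument of part (i) again yield, via Corollary~\ref{block_smooth_example}, a solution of $\mathcal{S}$-block sparsity level $\le l$. To sharpen this to level exactly $l$ when $l\ge 1$, I would argue by contradiction with the ``only if'' direction of Corollary~\ref{block_smooth_example}: a solution of block sparsity level $\le l-1$ would force $\lambda\ge a_j^\delta$ on some set of $d-l+1$ indices, and the smallest attainable maximum of $a_j^\delta$ over such a set is $a_{k_{d-l+1}}^\delta$ (obtained by discarding the $l-1$ largest values), so $\lambda\ge a_{k_{d-l+1}}^\delta$, contradicting $\lambda=C\delta<a_{k_{d-l+1}}^\delta$; hence every solution has block sparsity level $\ge l$, and the solution just constructed has level exactly $l$ (for $l=0$ this is immediate). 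Finally, inserting $\lambda=C\delta$ into \eqref{estimate_error} turns the right-hand side into $\frac{C\delta\beta_2^2}{2\beta_1}+\frac{\delta}{2C\beta_1}+\frac{\beta_2\delta}{\sqrt{2}\beta_1}$, and factoring out $\delta$ and simplifying $\frac{2C\beta_2}{\sqrt{2}}=\sqrt{2}C\beta_2$ gives $\|\mathbf{u}_\lambda^\delta-\tilde{\mathbf{u}}\|_2\le C'\delta$ with $C'=\frac{C^2\beta_2^2+\sqrt{2}C\beta_2+1}{2C\beta_1}$, as claimed in \eqref{error_estimate_Cdelta}.

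The part involving the error bound is entirely routine: a direct substitution into Lemma~\ref{estimate_lemma} followed by elementary algebra. The only delicate point is the exact-level claim in part (ii), which needs both directions of Corollary~\ref{block_smooth_example}, the strictness of $C\delta<a_{k_{d-l+1}}^\delta$, and the optimality observation that permitting the blocks with the largest $a_j^\delta$ to be nonzero minimizes the threshold one must exceed; the boundary cases and possible ties among the $a_{k_i}^\delta$ are absorbed by the stated conventions and by the fact that the characterization only involves the inequalities $\lambda\ge a_j^\delta$.
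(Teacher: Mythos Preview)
Your proof is correct and follows essentially the same approach as the paper: invoke Corollary~\ref{block_smooth_example} with $\mathbf{x}$ replaced by $\mathbf{x}^\delta$ to obtain the sparsity conclusion, and substitute the chosen $\lambda$ into the estimate \eqref{estimate_error} of Lemma~\ref{estimate_lemma} to obtain the error bound. Your treatment of part~(ii) is in fact more thorough than the paper's, which simply asserts that Corollary~\ref{block_smooth_example} yields block sparsity of level exactly $l$ without spelling out the contradiction argument you supply via its ``only if'' direction and the strict inequality $C\delta<a_{k_{d-l+1}}^\delta$.
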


\begin{proof}
According to the hypothesis that condition \eqref{suff_nece_Lasso_block_separable} is satisfied, we prove this theorem by employing  Corollary \ref{block_smooth_example} with $\mathbf{x}$ being replaced by $\mathbf{x}^\delta$. Moreover, since we assume that the minimal norm interpolation problem \eqref{mini_norm} has a unique solution $\tilde{\mathbf{u}}$, Lemma \ref{estimate_lemma}  ensures that there exist $\beta_1$, $\beta_2>0$ independent of $\lambda, \delta$ such that the error estimation \eqref{estimate_error} holds.

We first prove statement (i). Since the regularization parameter is chosen as $\lambda:=a_{k_{d-l}}^\delta$, according to the nondescreasing order $a_{k_1}^\delta\leq a_{k_2}^\delta\leq\cdots\leq a_{k_d}^\delta$ of the sequence $a_{k_j}$, $j\in\mathbb{N}_{d}$,
we have that $\lambda\geq a_{k_j}^{\delta}$, for all $j\in\mathbb{N}_{d-l}$.
Appealing to Corollary \ref{block_smooth_example},
we conclude that the regularization problem \eqref{lasso_noise_data} with the so chosen parameter $\lambda$ has a sparse solution $\mathbf{u}_{\lambda}^\delta$ with the $\mathcal{S}$-block sparsity of level $\leq l$. The error bound  \eqref{estimate_error_sparsity} of the regularized solution $\mathbf{u}_{\lambda}^\delta$  is obtained by
substituting $\lambda=a_{k_{d-l}}^\delta$ into the right hand side of estimate \eqref{estimate_error} in  Lemma \ref{estimate_lemma}.

We next show statement (ii). 
Substituting $\lambda=C\delta$ into the right hand side of the estimate \eqref{estimate_error} of Lemma \ref{estimate_lemma} with straightforward computation leads to the error bound \eqref{error_estimate_Cdelta}. In addition, Corollary \ref{block_smooth_example} ensures that the regularized solution $\mathbf{u}_{\lambda}^\delta$ has
the $\mathcal{S}$-block sparsity of level $l$, where $l$ satisfies $a_{k_{d-l}}^\delta\leq C\delta< a_{k_{d-l+1}}^\delta$.
\end{proof}

We may obtain a special result when the matrix $\mathbf{A}$ is an orthogonal matrix of order $n$. In this case, condition \eqref{suff_nece_Lasso_block_separable} holds for the nature partition $\mathcal{S}:=\{\mathcal{S}_1, \dots, \mathcal{S}_n\}$ of the set $\mathbb{N}_n$ and the sequence defined by \eqref{sequence_a_j} can be rearranged as 
$a_{k_1}^\delta\leq a_{k_2}^\delta\leq\cdots\leq a_{k_n}^\delta$ with $k_i\in\mathbb{N}_n,$ $ i\in\mathbb{N}_n$. 

\begin{corollary}\label{choice_error_orthogonal}
Suppose that  $\mathbf{A}\in\mathbb{R}^{n\times n}$ is an orthogonal matrix and for $\delta>0$, $\mathbf{x}^\delta\in\mathbb{R}^p$ satisfies $\|\mathbf{x}^\delta-\mathbf{x}\|_2\leq\delta$.

(i) If $\lambda:=a_{k_{n-l}}^\delta$ for a given  $l\in\mathbb{Z}_{n+1}$, then the regularization problem \eqref{lasso_noise_data} has a sparse solution $\mathbf{u}_{\lambda}^\delta$ with sparsity of level $\leq l$ satisfying the error bound 
\begin{equation*}
\|\mathbf{u}_{\lambda}^\delta-\tilde{\mathbf{u}}\|_2\leq\frac{a_{k_{n-l}}^\delta\beta_2^2}{2\beta_1}+\frac{\delta^2}{2a_{k_{n-l}}^\delta\beta_1}+\frac{\beta_2\delta}{\sqrt{2}\beta_1}
\end{equation*}
for some constants $\beta_1, \beta_2>0$ independent of $\delta$.

(ii) If $\lambda:=C\delta$ for a constant $C>0$, then the regularization problem \eqref{lasso_noise_data} has a sparse solution $\mathbf{u}_{\lambda}^{\delta}$ with sparsity of level $l$ satisfying the error bound
\begin{equation*}
\|\mathbf{u}_{\lambda}^{\delta}-\tilde{\mathbf{u}}\|_2\leq
C'\delta,
\end{equation*}
where $l\in\mathbb{Z}_{n+1}$ satisfies $a_{k_{n-l}}^\delta\leq C\delta< a_{k_{n-l+1}}^\delta$ and $C':=\frac{C^2\beta_2^2+\sqrt{2}C\beta_2+1}{2C\beta_1}$.
\end{corollary}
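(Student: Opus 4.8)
The plan is to obtain Corollary~\ref{choice_error_orthogonal} as a direct specialization of Theorem~\ref{choice_error} to the natural partition $\mathcal{S}:=\{S_1,\dots,S_n\}$ of $\mathbb{N}_n$ with $S_j:=\{j\}$. The first step is to verify that an orthogonal matrix $\mathbf{A}\in\mathbb{R}^{n\times n}$ automatically satisfies condition~\eqref{suff_nece_Lasso_block_separable} with respect to this partition. Indeed, for the natural partition each sub-matrix $\mathbf{A}_{(j)}$ reduces to the single column $\mathbf{A}_j$, and the orthonormality of the columns of $\mathbf{A}$ gives $(\mathbf{A}_{(j)})^{\top}\mathbf{A}_{(k)}=(\mathbf{A}_j)^{\top}\mathbf{A}_k=0$ whenever $j\neq k$. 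Hence the hypotheses of Theorem~\ref{choice_error} are met with $d:=n$ and $n_j:=1$ for all $j\in\mathbb{N}_n$.

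Next I would record two bookkeeping observations. First, because each block consists of a single index, the notion of $\mathcal{S}$-block sparsity of level $l$ coincides with ordinary sparsity of level $l$ for a vector in $\mathbb{R}^n$; this is exactly the situation already noted in the discussion following Corollary~\ref{block_smooth_example}. Second, the quantities in~\eqref{sequence_a_j} collapse to the scalars $a_j^\delta=\|(\mathbf{A}_j)^{\top}\mathbf{x}^\delta\|_{\infty}=|(\mathbf{A}_j)^{\top}\mathbf{x}^\delta|$, so that rearranging them in nondecreasing order as $a_{k_1}^\delta\leq a_{k_2}^\delta\leq\cdots\leq a_{k_n}^\delta$ with $k_i\in\mathbb{N}_n$ is precisely the ordering used in the statement.

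Finally, I would invoke Theorem~\ref{choice_error} with the above identifications. For part~(i), taking $\lambda:=a_{k_{n-l}}^\delta$ and applying statement~(i) of Theorem~\ref{choice_error} (with $d$ replaced by $n$) produces a solution $\mathbf{u}_\lambda^\delta$ of~\eqref{lasso_noise_data} whose $\mathcal{S}$-block sparsity---hence ordinary sparsity---is at most $l$, together with the displayed error bound obtained by substituting $a_{k_{n-l}}^\delta$ for $\lambda$ in the estimate of Lemma~\ref{estimate_lemma}; the constants $\beta_1,\beta_2>0$ are those furnished by that lemma and are independent of $\delta$. For part~(ii), taking $\lambda:=C\delta$ and applying statement~(ii) of Theorem~\ref{choice_error} yields a solution with sparsity of level $l$, where $a_{k_{n-l}}^\delta\leq C\delta<a_{k_{n-l+1}}^\delta$, and the error bound $C'\delta$ with $C':=\frac{C^2\beta_2^2+\sqrt{2}C\beta_2+1}{2C\beta_1}$. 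Since the argument is a pure specialization, I do not anticipate any genuine obstacle; the only point requiring (elementary) care is the verification that orthogonality of $\mathbf{A}$ forces~\eqref{suff_nece_Lasso_block_separable} for the natural partition and that block sparsity then reduces to ordinary sparsity.
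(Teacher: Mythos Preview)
Your approach is essentially the paper's: specialize Theorem~\ref{choice_error} to the natural partition. However, you have omitted one hypothesis check. Theorem~\ref{choice_error} requires not only condition~\eqref{suff_nece_Lasso_block_separable} but also that the minimal norm problem~\eqref{mini_norm} have a \emph{unique} solution $\tilde{\mathbf{u}}$; this assumption does not appear in the statement of Corollary~\ref{choice_error_orthogonal}, so it must be derived from the orthogonality of $\mathbf{A}$. The paper handles this in one line: since $\mathbf{A}$ is orthogonal it is invertible, hence the constraint $\mathbf{A}\mathbf{u}=\mathbf{x}$ has a unique solution, and the minimal norm problem~\eqref{mini_norm} is trivially uniquely solved. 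You should insert this observation before declaring that ``the hypotheses of Theorem~\ref{choice_error} are met''; otherwise the invocation of that theorem is not justified. With this one-sentence addition, your argument matches the paper's proof.
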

\begin{proof}
Note that the orthogonal matrix $\mathbf{A}$ satisfies condition \eqref{suff_nece_Lasso_block_separable} for the nature partition $\mathcal{S}:=\{\mathcal{S}_1, \dots, \mathcal{S}_n\}$ of $\mathbb{N}_n$. It follows from
the invertability of $\mathbf{A}$ that the minimal norm interpolation problem \eqref{mini_norm} has a unique solution. That is, the hypothesis of Theorem \ref{choice_error} is satisfied. Hence, the desired results of this corollary follows directly from Theorem \ref{choice_error} with the nature partition $\mathcal{S}$ of $\mathbb{N}_n$. 
\end{proof}

Theorem \ref{choice_error} and Corollary \ref{choice_error_orthogonal} provide parameter choice strategies which balance sparsity of the corresponding regularized solutions and their error bounds. Item (ii) of Theorem \ref{choice_error} and Corollary \ref{choice_error_orthogonal} shows that the proposed parameter choice strategy generates a regularized solution which enjoys the same error bound given in \cite{grasmair2008sparse,grasmair2011necessary} and as well as sparsity of a prescribed level.

Theorem \ref{choice_error} may be extended to general cases when matrix  $\mathbf{A}$ may not satisfy condition \eqref{suff_nece_Lasso_block_separable} and/or the transform matrix $\mathbf{B}$ is not an identity. In such cases, by combining  Corollary \ref{general-smooth-example} (or Theorem \ref{sparse_B}) and Lemma \ref{estimate_lemma}, we may obtain results similar to those in Theorem \ref{choice_error} on choices of the regularization parameter that balances the approximation error and sparsity of the regularized solution. We leave details of further development to the interested readers.

\section{Numerical Experiments}
In this section, we conduct numerical experiments 
to verify theoretical results obtained in the last three sections. The numerical results demonstrate that the proposed parameter choices can balance sparsity of the regularized solution and its approximation accuracy. 
Specifically, we test the results stated in  Theorems 
\ref{choice_sparsity_block_nonsmooth}, \ref{block_smooth_example1}, \ref{sparse_B} and \ref{choice_error}. All the experiments are performed with Matlab R2018a on an Intel Core I9 (8-core) with 5.0 GHz and 32 Gb RAM. 

In our numerical computation, the regularization problems are solved by employing the Fixed Point Proximity Algorithm (FPPA) developed in \cite{argyriou2011efficient,li2015multi,micchelli2011proximity}, which we review below. Suppose that $f:\mathbb{R}^n\to \overline{\mathbb{R}}:= \mathbb{R}\cup\{+\infty\}$ is a convex function, with $\mathrm{dom}(f):=\{\mathbf{x}\in\mathbb{R}^n:f(\mathbf{x})<+\infty\}\neq{\emptyset}.$ The proximity operator $\text{prox}_{f}:\mathbb{R}^n\to\mathbb{R}^n$ of $f$ is defined for $\mathbf{x}\in\mathbb{R}^n$ by
\begin{equation*}
\text{prox}_{f}(\mathbf{x}):=\argmin\left\{\frac{1}{2}\|\mathbf{u}-\mathbf{x}\|_2^2+f(\mathbf{u}):\mathbf{u}\in\mathbb{R}^n\right\}.
\end{equation*}
Suppose that   $\bm{\varphi}:\mathbb{R}^n\to\overline{\mathbb{R}}$  and $\bm{\omega}:\mathbb{R}^m\to\overline{\mathbb{R}}$ are two convex functions which may not be differentiable, and a matrix $\mathbf{C}\in\mathbb{R}^{m\times n}$. We solve the optimization problem 
$\min\{\bm{\varphi}(\mathbf{u})+\bm{\omega}(\mathbf{C}\mathbf{u}):\mathbf{u}\in\mathbb{R}^n\}$ by using the following FPPA.
For given positive constants $\beta$, $\rho$ and initial points $\mathbf{u}^0$, $\mathbf{v}^0$, the FPPA is described as
\begin{equation}\label{FPPA}
\left\{\begin{array}{l}
\mathbf{u}^{k+1}=\operatorname{prox}_{\beta\bm{\varphi}}\left(\mathbf{u}^{k}-\beta \mathbf{C}^{\top} \mathbf{v}^{k}\right), \\
\mathbf{v}^{k+1}=\rho\left(\mathcal{I}-\operatorname{prox}_{\frac{1}{\rho} \bm{\omega}}\right)\left(\frac{1}{\rho} \mathbf{v}^{k}+\mathbf{C}\left(2 \mathbf{u}^{k+1}-\mathbf{u}^{k}\right)\right).
\end{array}\right.
\end{equation}
According to \cite{li2015multi}, iteration \eqref{FPPA} converges if
$\beta \rho<1/\|\mathbf{C}\|_2^2$.
We remark that $\beta$ and $\rho$ involved in \eqref{FPPA} are algorithm parameters which play important roles, their appropriate choices making the algorithm to converge and potentially accelerating the convergence. As different choices of the pair of $\beta$ and $\rho$ under convergence condition $\beta \rho<1/\|\mathbf{C}\|_2^2$ will merely affect the convergence speed of sequence $\mathbf{u}^{k}$ generated by \eqref{FPPA} approaching to the solution $\mathbf{u}^*$, in the experiments to be presented below we will run the algorithm with a relatively large number of iterations instead of putting much effort on tuning $\beta$ and $\rho$ since finding optimal choices of these two parameter is not a focus of this study. The number of iterations depends on the specific problem. 
In each of the numerical examples, by $\mathbf{u}^\infty$ we denote the numerical approximation of solution $\mathbf{u}^*$ after the iterations converge. For convenience, we use $``\mathrm{SL}"$ and $``\mathrm{BSL}"$ to denote the sparsity level and the block sparsity level of $\mathbf{u}^\infty$ (or $\mathbf{B}\mathbf{u}^\infty$), respectively.

\subsection{Image denoising by using DWT}
In this subsection, we test the result in Theorem \ref{choice_sparsity_block_nonsmooth} by considering the image denoising model with an orthogonal discrete wavelet transform.  Given a noisy image $\mathbf{x}\in\mathbb{R}^{n^2}$ and a one-dimensional orthogonal discrete wavelet transform matrix $\mathbf{W}\in\mathbb{R}^{n\times n}$. We consider the image denoising model \cite{donoho1995adapting}
\begin{equation}\label{Image_denoising}
\min\left\{\frac{1}{2}\|\mathbf{u}-\mathbf{x}\|_2^2+\lambda\|\mathbf{B}\mathbf{u}\|_1:\mathbf{u}\in\mathbb{R}^{n^2}\right\}, \end{equation}
where $\mathbf{B}:=\mathbf{W}\otimes\mathbf{W}$ and $\otimes$ denotes the Kronecker product.
Note that the matrix $\mathbf{B}$, as a Kronecker product of two orthogonal matrices, is also orthogonal. By simple change of variables $\mathbf{v}=\mathbf{B}\mathbf{u}$, we identify \eqref{Image_denoising} as the Lasso regularized model \eqref{lasso} with $p=n$, and $n$, $\mathbf{A}$, $\mathbf{u}$ being replaced by $n^2$, $\mathbf{B}^{\top}$, $\mathbf{v}$, respectively. Applying Theorem \ref{choice_sparsity_block_nonsmooth} to the Lasso regularized model \eqref{lasso} with an orthogonal matrix leads to Corollary \ref{block_smooth_example} with the nature partition  $\mathcal{S}$ of $\mathbb{N}_n$. 

The experiment is conducted on gray scale test image ‘Cameraman' with size $256\times 256$. We use $\mathbf{f}:=[f_j:j\in\mathbb{N}_{n^2}]$ with $n:=256$ for the original image. The noisy image is modeled as $\mathbf{x}:=\mathbf{f}+\bm{\eta}$ with noise $\bm{\eta}$ iid $N(0,\sigma^2)$ being Gaussian noise at level $\sigma=20$. The discrete wavelet transform matrix $\mathbf{W}$ is the Daubechies wavelet with the vanishing moments $\mathrm{N}=4$ and the coarsest resolution level $\mathrm{L}=4$. We choose seven different values of the parameter $\lambda$ according to Corollary \ref{block_smooth_example}
from the set $\{b_j:j\in\mathbb{N}_{n^2}\}$ with  $b_j:=|((\mathbf{B}^{\top})_{j})^{\top}\mathbf{x}|$, $j\in\mathrm{N}_{n^2}$. Specifically, we rearrange the sequence $b_j, j\in\mathbb{N}_{n^2}$, in a nondecreasing order: $b_{k_1}\leq b_{k_2}\leq \cdots \leq b_{k_{n^2}}$ with distinct $k_i\in\mathbb{N}_{n^2}, i\in\mathbb{N}_{n^2}$. Numerically we pick $\lambda=b_{k_j}$, for $j=23515$, $35941$, $45295$, $53490$, $61229$, $65091$, $65536$ with indices of $j$ selected randomly. The minimization problem \eqref{lasso} with each value of $\lambda$ is solved by the FPPA with $\bm{\varphi}:=\lambda\|\cdot\|_1$, $\bm{\omega}:=\frac{1}{2}\|\cdot-\mathbf{x}\|_2^2$ and $\mathbf{C}:=\mathbf{B}^{\top}$.

\begin{table}[ht]
\caption{\label{image_DWT_sparsity} Numerical results for image denoising model by 100 iterations}
\vspace{0.2cm}
\begin{indented}
\lineup
\item[]\begin{tabular}{@{}cc|c|c|c|c|c|c|c}
\hline\hline
&$\lambda$ &$10.1895$ &$16.7430$ &$23.0684$ &$31.3891$ &$49.8117$ &$228.6029$ &$3707.6947$\\ \hline 
&$\mathrm{SL}$  &$42021$  &$29595$ &$20241$ &$12046$
&$4307$ &$445$  &$0$ \\   \hline 
&$\mathrm{PSNR}$   
&$25.3330$  &$26.5095$ &$26.8584$ &$26.4546$ &$24.7376$ &$18.8337$  &$5.5824$ \\   \hline \hline 
\end{tabular}
\end{indented}
\end{table}



\begin{figure}[H]
\centering
\subfigure[]{
\begin{minipage}[ht]{0.3\linewidth}
\centering
\includegraphics[width=4.6cm,height=3.6cm]{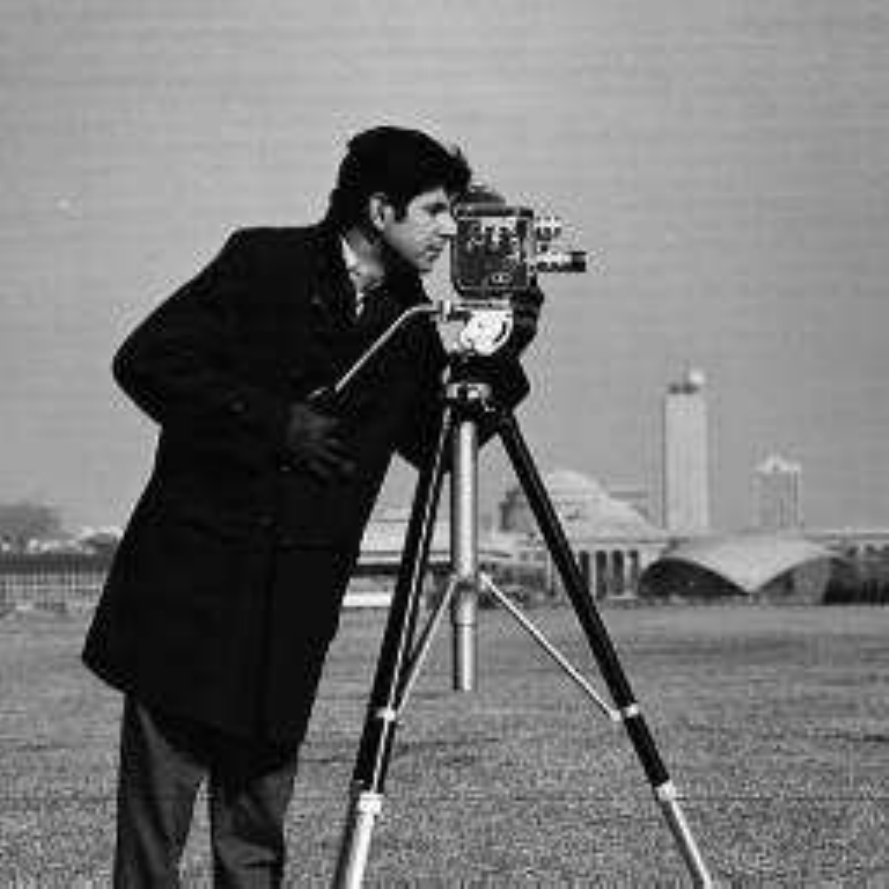}
\end{minipage}%
}%
\subfigure[]{
\begin{minipage}[ht]{0.3\linewidth}
\centering
\includegraphics[width=4.6cm,height=3.6cm]{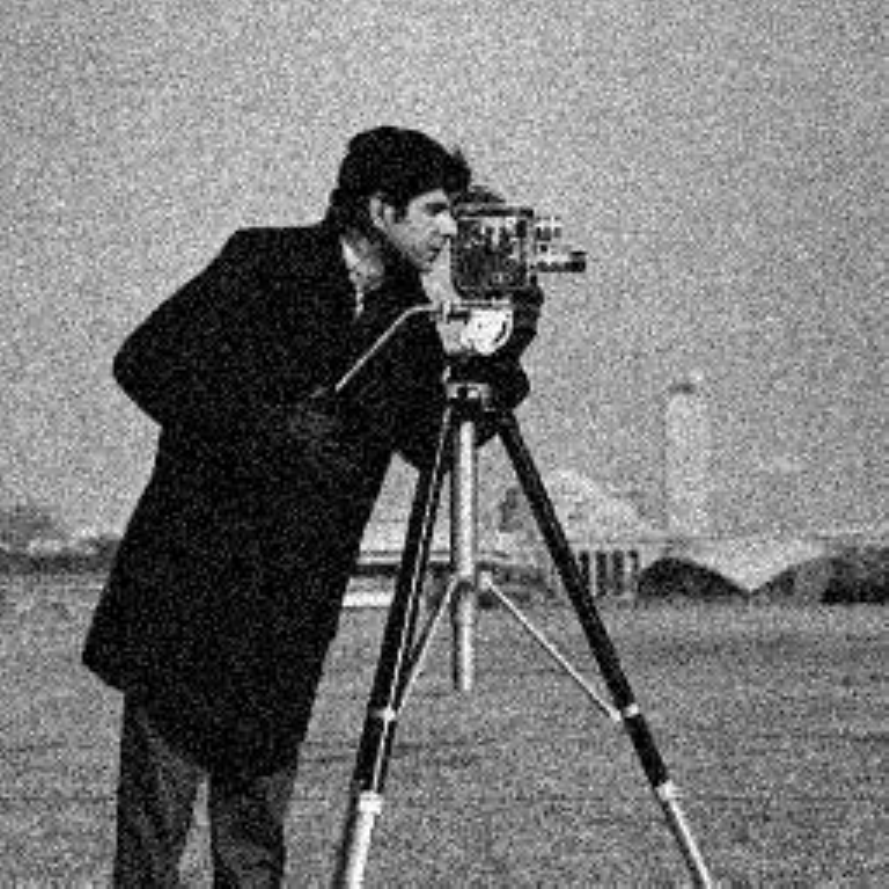}
\end{minipage}
}%
\subfigure[]{
\begin{minipage}[ht]{0.3\linewidth}
\centering
\includegraphics[width=4.6cm,height=3.6cm]{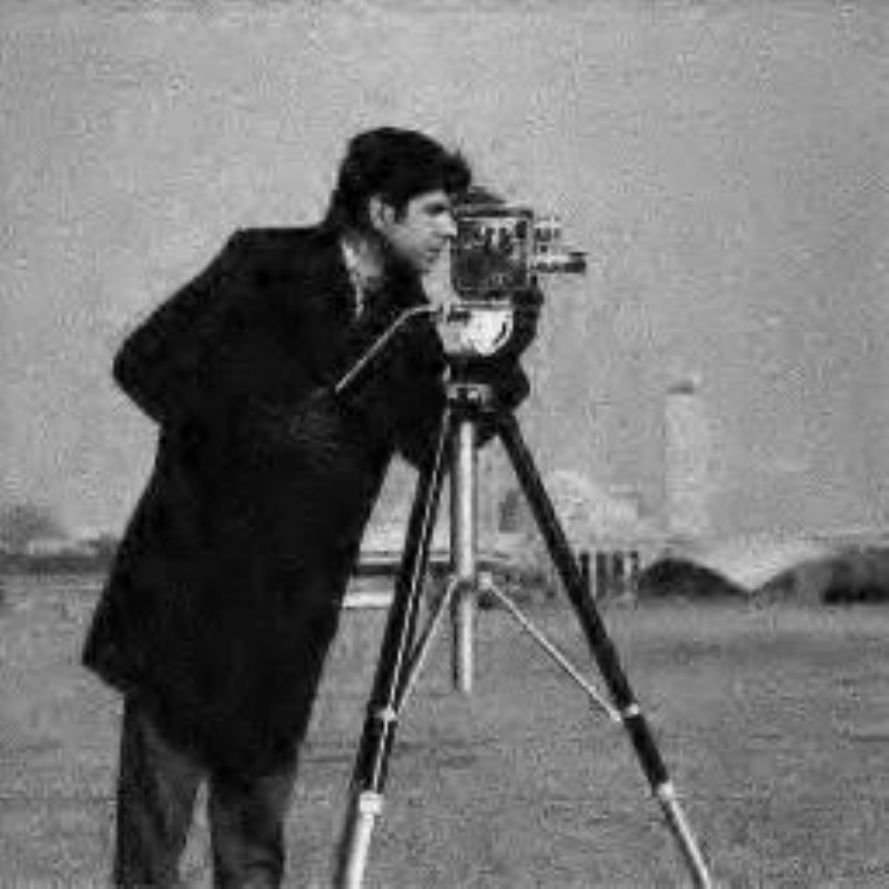}
\end{minipage}%
}%
\centering
\caption{(a) The original image of ‘Cameraman’; (b) the noisy
image of ‘Cameraman’ with Gaussian noise at level $\sigma=20$; (c) the denoised image of ‘Cameraman’  with $\lambda=23.0684$ (SL=20241, $\mathrm{PSNR}=26.8584$).}
\label{figure_Original_Noised}
\end{figure}

We report in Table \ref{image_DWT_sparsity} the selected values of parameter $\lambda$, the sparsity levels $\mathrm{SL}$ of the numerical approximation $\mathbf{v}^\infty$ and the PSNR values of the denoised images $\mathbf{B}^{\top}\mathbf{v}^{\infty}$, where the PSNR value is defined by  $\mathrm{PSNR}:=20 \mbox{log}_{10}(255/\|\mathbf{f}-\mathbf{B}^{\top}\mathbf{v}^{\infty}\|_2)$, and show in Figure \ref{figure_Original_Noised} the original image, the noisy image, and the denoised image with $\lambda=23.0684$. These numerical results confirm the theoretical result presented in Corollary \ref{block_smooth_example}.
Note that Corollary \ref{block_smooth_example} guarantees that the solutions $\mathbf{v}^*$ of problem \eqref{lasso} with the selected values of the parameter $\lambda$ have the sparsity levels $l=n^2-j$, that is, $l=42021$, $29595$, $20241$, $12046$, $4307$, $445$, $0$, respectively, which coincide exactly with the sparsity levels of the numerical solutions  $\mathbf{v}^\infty$  reported in Table \ref{image_DWT_sparsity}. 

\subsection{Signal denoising by the group Lasso regularized model}\label{Group_lasso_signal}
In this subsection, we test the result in Theorem \ref{block_smooth_example1} by considering the group Lasso regularized model \eqref{group_lasso} with $\mathbf{A}\in\mathbb{R}^{n\times n}$ whose columns form an orthogonal wavelet basis. 

We consider recovering the Doppler signal function
\begin{equation}\label{Doppler_signal}
f(t):=\sqrt{t(1-t)}\mathrm{sin}\left(\frac{2.1\pi}{t+1.05}\right), \ t\in[0,1], \end{equation}
from its noisy data by employing the group Lasso regularized model \eqref{group_lasso}. 
Let $n:=4096$. We generate sample points $t_j, j\in\mathbb{N}_{n}$, on uniform grid in $[0,1]$ with step size $\mathrm{h}=1/(n-1)$ and consider recovering the signal $\mathbf{f}:=[f(t_j):j\in\mathbb{N}_{n}]$ from a noisy signal $\mathbf{x}:= \mathbf{f}+\bm{\eta}$, where $\bm{\eta}$ is an additive white Gaussian noise with the signal-to-noise ratio $\mathrm{SNR}=7$. The matrix $\mathbf{A}$ comes from the Daubechies wavelet with $\mathrm{N}=6$ and $\mathrm{L}=3$. By introducing a partition $\mathcal{S}:=\left\{S_{1},S_{2},\ldots, S_{10}\right\}$ of the index set $\mathbb{N}_{n}$ with the cardinality $n_1=2^3$ and $n_{j}=2^{j+1},j\in\mathbb{N}_{10}\setminus\{1\}$, we decompose the matrix $\mathbf{A}$ into $10$ sub-matrices defined by  $\mathbf{A}_{(j)}:=[\mathbf{A}_k:k\in{S}_j]\in\mathbb{R}^{n\times n_j}$, $j\in\mathbb{N}_{10}$. We choose seven different values of the parameter $\lambda$ according to Theorem \ref{block_smooth_example1} from the set $\{a_j:j\in\mathbb{N}_{10}\}$ with $a_j:=\|(\mathbf{A}_{(j)})^{\top}\mathbf{x}\|_2/{\sqrt{n_j}}$, $j\in\mathbb{N}_{10}$. Specifically, we rearrange the sequence $a_j,j\in\mathbb{N}_{10}$, in a nondecreasing order: $a_{k_1}\leq a_{k_2}\leq \cdots \leq a_{k_{10}}$ with distinct $k_i\in\mathbb{N}_{10}, i\in\mathbb{N}_{10}$. We then choose $\lambda=a_{k_j}$, for $j=0$, $2$, $4$, $5$, $7$, $9$, $10$. The  problem \eqref{group_lasso} 
with each value of $\lambda$ is solved by the FPPA with $\bm{\varphi}(\mathbf{u}):=\lambda\sum_{j\in\mathbb{N}_{10}}\sqrt{n_j}\|\mathbf{u}_j\|_2$, $\mathbf{u}\in\mathbb{R}^n$, $\bm{\omega}:=\frac{1}{2}\|\cdot-\mathbf{x}\|_2^2$ and $\mathbf{C}:=\mathbf{A}$.

\begin{table}[ht]
\caption{\label{Group_lasso_sparsity_1m} Numerical results for signal denoising with the group Lasso regularized model by $1000$ iterations}
\vspace{0.2cm}
\begin{indented}
\lineup
\item[]\begin{tabular}{@{}cc|c|c|c|c|c|c|c}
\hline\hline
&$\lambda$  &$0.1312$ &$0.1400$  &$0.1541$ &$0.2175$ &$1.6193$ &$3.1193$ &$5.0126$\\ \hline 
&$\mathrm{BSL}$  &$10$ &$8$  &$6$ &$5$ 
&$3$ &$1$  &$0$ \\   \hline 
&$\mathrm{MSE}$  &$0.0017$ &$0.0018$  &$0.0020$ &$0.0031$ 
&$0.0296$ &$0.0563$  &$0.0858$ \\   \hline \hline 
\end{tabular}
\end{indented}
\end{table}

The selected values of parameter $\lambda$, the $\mathcal{S}$-block sparsity levels $\mathrm{BSL}$ of the numerical approximation $\mathbf{u}^\infty$ and the $\mathrm{MSE}$ values of the denoised signals $\mathbf{A}\mathbf{u}^{\infty}$ are reported in Table \ref{Group_lasso_sparsity_1m}, where the $\mathrm{MSE}$ value is defined by  $\mathrm{MSE}:=\frac{1}{n}\|\mathbf{f}-\mathbf{A}\mathbf{u}^{\infty}\|_2^2$. These  numerical results confirm the theoretical result stated in Theorem \ref{block_smooth_example1}. In particular, the $\mathcal{S}$-block sparsity levels $\mathrm{BSL}$ of the numerical approximation $\mathbf{u}^\infty$ match exactly with those of the solutions $\mathbf{u}^*$, which are given by $l=10-j$ (that is, $l=10$, $8$, $6$, $5$, $3$, $1$, $0$) guaranteed by  Theorem \ref{block_smooth_example1}. Moreover, the approximation errors of the solutions corresponding to the selected values of $\lambda$ exhibit increase as  the values of $\lambda$ become larger.

\subsection{Total-variation signal denoising}\label{TV_signal_denoising}

In this experiment, we test the result in Theorem \ref{sparse_B} by considering the total-variation signal denoising model \eqref{total-variation-signal-denoising}. Note that Theorem \ref{sparse_B} applied to this model leads to Corollary \ref{general-smooth-example0}. Due to the choice of $\mathbf{B}$ the resulting minimization problem \eqref{total-variation-signal-denoising} is neither separable nor block separable. Hence, the choice of parameter $\lambda$ described in Corollary \ref{general-smooth-example0} depends on the unknown solution $\mathbf{u}^*$. Unlike the experiments presented in the last two subsections, in this experiment, we test the necessary condition described by the inequalities \eqref{lambda_general-smooth-example02} in Corollary \ref{general-smooth-example1} for the problem \eqref{total-variation-signal-denoising} to have a solution $\mathbf{u}^{*}$ with a certain sparsity level under the transform $\mathbf{D}^{(1)}$. Specifically, for a chosen value of parameter $\lambda$, we solve the corresponding regularization problem \eqref{total-variation-signal-denoising} and obtain a numerical approximation $\mathbf{u}^{\infty}$. We then verify the pair of the chosen $\lambda$ value and the corresponding solution  $\mathbf{u}^{\infty}$ satisfy inequalities \eqref{lambda_general-smooth-example02}, and the sparsity level of $\mathbf{D}^{(1)}\mathbf{u}^{\infty}$. In addition, we test the parameter choice strategy described in Remark \ref{general-smooth-example0_most_sparse} for the case when the solution has the most sparsity under the transform $\mathbf{D}^{(1)}$.

\begin{table}[ht]
\caption{\label{table: transformation B} Numerical results for total-variation signal denoising model by $50000$ iterations}
\vspace{0.2cm}
\begin{indented}
\lineup
\item[]\begin{tabular}{@{}cc|c|c|c|c|c|c|c}
\hline\hline      
&$\lambda$              
&$0.1$ &$0.2$ &$0.5$  &$6$ &$30$ &$102$ &$270.1717$\\ \hline 
&$\gamma$
&$0.1000$ &$0.1997$  &$0.5000$ &$6.0000$ &$29.9983$ &$101.9980$ &$270.1716$\\ \hline 
&$\mathrm{SL}$  
&$1488$ &$744$ &$355$  &$160$ &$67$ &$12$ &$0$ \\  \hline 
&$\mathrm{MSE}$  
&$0.0042$ &$0.0020$ &$0.0012$  &$0.0076$ &$0.0297$ &$0.0638$ &$0.0835$ \\ \hline \hline 
\end{tabular}
\end{indented}
\end{table}

Again, we consider recovering the Doppler signal function defined by \eqref{Doppler_signal} from its noisy data. The original signal $\mathbf{f}$ and the noisy signal $\mathbf{x}$ are chosen in the same way as in subsection \ref{Group_lasso_signal}. According to Remark \ref{general-smooth-example0_most_sparse}, when $\lambda \geq\lambda_{\max}:=\| (\widetilde{\mathbf{D}}^{(1)'})^{\top}\mathbf{x}\|_{\infty} \ (\approx 270.1717)$, the corresponding regularized solution is the zero vector under the transform $\mathbf{D}^{(1)}$. For this reason, we choose seven different values of the parameter $\lambda$ in the interval $(0,\lambda_{\max}]$ and solve the minimization problem  \eqref{total-variation-signal-denoising} with each of such values by employing the FPPA with $\bm{\varphi}:=\frac{1}{2}\|\cdot-\mathbf{x}\|_2^2$, $\bm{\omega}:=\lambda\|\cdot\|_1$ and $\mathbf{C}:=\mathbf{D}^{(1)}$. 
Associated with the numerical approximation $\mathbf{u}^{\infty}$, we identify $l$ distinct integers $k_i\in \mathbb{N}_{n-1}$ so that $\mathbf{D}^{(1)}\mathbf{u}^{\infty}:=\sum_{i\in\mathbb{N}_l}z^{\infty}_{k_i}\mathbf{e}_{k_i}$, $z^{\infty}_{k_i}\in\mathbb{R}\setminus{\{0\}}$, $i\in\mathbb{N}_l$. That is, $\mathbf{D}^{(1)}\mathbf{u}^{\infty}$ has only $l$ nonzero components. We then compute the number $\gamma:=\mathrm{max}\left\{\left|(\mathbf{D}^{(1)'}_j)^{\top}
(\mathbf{u}^\infty-\mathbf{x})\right|:j\in \mathbb{N}_{n-1}\setminus\{k_i:i\in\mathbb{N}_l\}\right\}$ and verify indeed that $\lambda>\gamma$, which in turn implies that inequalities \eqref{lambda_general-smooth-example02} in Corollary \ref{general-smooth-example0} are satisfied.
We report in Table \ref{table: transformation B} the selected values of parameter $\lambda$, the values of $\gamma$, the sparsity levels $\mathrm{SL}$ of $\mathbf{D}^{(1)}\mathbf{u}^\infty$ and the $\mathrm{MSE}$ values of the denoised signals $\mathbf{u}^\infty$, where the $\mathrm{MSE}$ value is defined by $\mathrm{MSE}:=\frac{1}{n}\|\mathbf{f}-\mathbf{u}^{\infty}\|_2^2$. These numerical results indeed confirm inequalities \eqref{lambda_general-smooth-example02} in Corollary \ref{general-smooth-example0} and the theoretical result stated in Remark \ref{general-smooth-example0_most_sparse}. In particular, numerical results presented in the last column of Table \ref{table: transformation B} verify the estimate presented in Remark \ref{general-smooth-example0_most_sparse}.

\subsection{$\ell_1$ SVM classification and regression with the squared loss function}\label{MNIST_database}

In this subsection,  we consider the $\ell_1$ SVM models \eqref{SVM_square_loss} for classification and regression with the squared loss function and test the theoretical estimate of Theorem \ref{sparse_B} applied to this model  (Corollary \ref{general-smooth-example1}). Like the case in subsection \ref{TV_signal_denoising}, the choice of parameter $\lambda$ described in Corollary \ref{general-smooth-example1} depends on the unknown solution $\mathbf{u}^*$ and hence we test the necessary condition for a solution $\mathbf{u}^*$ of the minimization problem \eqref{SVM_square_loss} to have sparsity of a certain level under the transform $\mathbf{B}$ (that is, the inequalities \eqref{lambda_general-smooth-example12} in Corollary \ref{general-smooth-example1}). Specifically, we obtain a numerical solution $\mathbf{u}^{\infty}$ by solving the minimization problem \eqref{SVM_square_loss} with a 
chosen value of parameter $\lambda$. We then verify that the pair of the chosen $\lambda$ value and the corresponding solution $\mathbf{u}^{\infty}$ satisfy inequalities \eqref{lambda_general-smooth-example12}, and the sparsity level of $\mathbf{B}\mathbf{u}^{\infty}$. For a special case of Corollary \ref{general-smooth-example1}, we test the choice of the parameter proposed in Remark \ref{general-smooth-example1_most_sparse} which leads to the zero solution $\mathbf{u}^\infty$ under the transform $\mathbf{B}$. 

In the first case, we consider the $\ell_1$ SVM classification model \eqref{SVM_square_loss}. The dataset that we use for classification is the handwriting digits from MNIST database \cite{lecun1998gradient}, which is originally composed of $60,000$ training samples and $10,000$ testing samples of the digits $``0"$ through $``9"$. We consider the binary classification problem with two digits $``7"$ and $``9"$ and take $8,141$ training samples and $2,037$ testing samples of these two digits from the database. The reason for which we choose these two particular digits is that it has been recognized that their handwriting is not easy to distinguish.  
The kernel we choose for model \eqref{SVM_square_loss} is the Gaussian Kernel defined by 
\begin{equation}\label{Gaussian_Kernel}
K(x,y):=\mathrm{exp}\left(-\frac{\|x-y\|_2^2}{2\mu^2}\right),
\ \ x,y\in \mathbb{R}^n,
\end{equation}
with $\mu=4.8$ and $n=8141$. Let $\mathbf{y}\in\{-1,1\}^{n}$ be the given vector storing labels of training data in which $-1$ and $1$ represent the digits $``7"$ and $``9"$ respectively. Remark \ref{general-smooth-example1_most_sparse} guarantees that when $\lambda\geq\lambda_{\mathrm{max}}:=\left\|\mathbf{K}^{\top}\left(\frac{1}{n}\mathbf{1}_{n}^{\top}\mathbf{y}\mathbf{1}_{n}-\mathbf{y}\right)\right\|_{\infty} (\approx 487.7454)$, the corresponding regularization problem \eqref{SVM_square_loss} has a solution with $\mathbf{B}\mathbf{u}^*=\mathbf{0}$. Accordingly, we choose seven different values of the parameter $\lambda$ in the interval $(0,\lambda_{\max}]$. The minimization problem \eqref{SVM_square_loss} with each of such values is solved by the FPPA with $\bm{\varphi}:=\lambda\|\cdot\|_1\circ\mathbf{B}$, $\bm{\omega}:=\frac{1}{2}\|\cdot-\mathbf{x}\|_2^2$ and $\mathbf{C}:=\mathbf{K}'$. 

\begin{table}[ht]
\caption{\label{table_sparsity_5} Numerical results for $\ell_1$ SVM classification model with the square loss function by $30,000$ iterations}
\vspace{0.2cm}
\begin{indented}
\lineup
\item[]\begin{tabular}{@{}cc|c|c|c|c|c|c|c} 
\hline\hline 
&$\lambda$          
 &$0.5$  &$0.7$ &$1.4$ &$1.8$ &$4.0$ &$6.0$ &$487.7454$ \\ \hline        
 &$\gamma$
 &$0.4985$ &$0.6982$ &$1.4000$ &$1.7990$ &$3.9956$ &$5.9990$ &$487.7454$\\ \hline 
&$\mathrm{SL}$  
&$1762$ &$880$ &$481$ &$340$ &$187$ &$142$ &$0$\\ \hline
&$\mathrm{TRA}$ &$99.46\%$  &$99.19\%$ &$98.61\%$ &$98.34\%$ &$97.57\%$ &$97.16\%$ &$50.67\%$\\ \hline        
&$\mathrm{TEA}$              
 &$98.77\%$ &$98.82\%$ &$98.48\%$ &$98.38\%$ &$97.79\%$ &$97.40\%$ &$50.47\%$\\        
\hline \hline   
\end{tabular}
\end{indented}
\end{table}

By using the numerical approximation $\mathbf{u}^{\infty}$, we identify $l$ distinct integers $k_i\in\mathbb{N}_{n}$ so that $\mathbf{B}\mathbf{u}^{\infty}:=\sum_{i\in\mathbb{N}_l}z^{\infty}_{k_i}\mathbf{e}_{k_i}\in\Omega_l$. We compute the numbers 
$\gamma:=\max\left\{\left|(\mathbf{K}_j)^{\top}\left(\mathbf{K}'\mathbf{u}^{\infty}-\mathbf{y}\right)\right|, j\in \mathbb{N}_n\setminus \{k_i:i\in\mathbb{N}_l\}\right\}$ and verify indeed that $\lambda\geq\gamma$. The selected values of parameter $\lambda$, the values of $\gamma$, the sparsity levels $\mathrm{SL}$ of $\mathbf{B}\mathbf{u}^\infty$, the values of accuracy on training datasets $\mathrm{TRA}$ and on testing datasets $\mathrm{TEA}$ are reported in Table \ref{table_sparsity_5}, where the accuracy is measured by labels that are correctly predicted by the model. These numerical results confirm  inequalities \eqref{lambda_general-smooth-example12} in Corollary \ref{general-smooth-example1}.  
In particular, the numerical results presented in the last column of Table \ref{table_sparsity_5} coincide with the theoretical result presented in Remark \ref{general-smooth-example1_most_sparse}.


\begin{table}[ht]
\caption{\label{table_sparsity_8} Numerical results for $\ell_1$ SVM regression model with square loss by $50,000$ iterations}
\vspace{0.2cm}
\begin{indented}
\lineup
\item[]\begin{tabular}{@{}cc|c|c|c|c|c|c|c}  
\hline\hline      
&$\lambda$              
&$0.002$ &$0.005$  &$0.02$ &$0.1$ &$1$  &$5$ &$37.5378$\\ \hline  
&$\gamma$
&$0.0020$ &$0.0049$ &$0.0198$ &$0.0997$ &$0.9981$ &$4.9911$ &$37.5378$\\ \hline
&$\mathrm{LS}$  
&$513$ &$306$ &$106$  &$28$ &$12$ &$5$ &$0$\\ \hline 
&$\mathrm{TRM}$ 
&$0.0129$ &$0.0132$  &$0.0139$ &$0.0148$ &$0.0179$  &$0.0196$ &$0.0512$\\ \hline        
&$\mathrm{TEM}$              
&$0.0146$ &$0.0148$ &$0.0151$  &$0.0153$ &$0.0171$ &$0.0190$ &$0.0513$\\
\hline \hline   
\end{tabular}
\end{indented}
\end{table}

In the second case, we consider the $\ell_1$ SVM regression model \eqref{SVM_square_loss}.   
The benchmark dataset is ``Mg"  \cite{chang2011libsvm} with 1,385 instances and each instance has 6 features. We take 1,000 instances as training data and 385 instances as testing data. The kernel involving in model \eqref{SVM_square_loss} is chosen as the Gaussian Kernel defined by 
\eqref{Gaussian_Kernel} with $\mu=1.07$ and $n=1,000$.  Let $\mathbf{y}\in\mathbb{R}^n$ be the given labels. According to Remark \ref{general-smooth-example1_most_sparse}, when $\lambda\geq\lambda_{\mathrm{max}}$($\approx 37.5378$) defined as above, the corresponding regularization problem \eqref{SVM_square_loss} has a solution with $\mathbf{B}\mathbf{u}^*=\mathbf{0}$. Then we select seven different values of parameter $\lambda$ in $(0,\lambda_{\max}]$. The minimization problem \eqref{SVM_square_loss} with each value of $\lambda$ is solved by the FPPA with the same $\bm{\varphi}$, $\bm{\omega}$ and $\mathbf{C}$ as those in the previous case. 
We compute the numbers 
$\gamma$ as in the first case and verify indeed that $\lambda\geq\gamma$. The MSE values of the prediction $\tilde{\mathbf{y}}:=\mathbf{K}'\mathbf{u}^\infty$ is defined by $\mathrm{MSE}:=\frac{1}{n}\|\mathbf{y}-\tilde{\mathbf{y}}\|^2_2$. The $\mathrm{MSE}$ on training and testing datasets are denoted by $\mathrm{TRM}$ and $\mathrm{TEM}$ for simplicity. We report in Table \ref{table_sparsity_8} the selected values of parameter $\lambda$, the values of $\gamma$, the sparsity levels $\mathrm{SL}$ of  $\mathbf{B}\mathbf{u}^\infty$ and the  $\mathrm{TRM}$ values and the $\mathrm{TEM}$ values. The numerical results presented in Table \ref{table_sparsity_8} confirm the inequalities \eqref{lambda_general-smooth-example12} in Corollary \ref{general-smooth-example1} and the theoretical result stated in Remark \ref{general-smooth-example1_most_sparse}.

\subsection{$\ell_1$ SVM classification with the hinge loss function and regression with the $\epsilon$-insensitive loss function}
In this subsection, we test the result in Theorem \ref{sparse_B} by considering the $\ell_1$ SVM classification model \eqref{SVM_hinge_loss} and the $\ell_1$ SVM regression model \eqref{SVM_epsilon_insensitive_loss}. Applying Theorem \ref{sparse_B} to these two models leads to Corollary \ref{general-example1} and Corollary \ref{general-example2}, respectively. Noting that for these two $\ell_1$ SVM models, the choice of parameter $\lambda$ in Corollaries \ref{general-example1} and \ref{general-example2} also depends on the unknown solution $\mathbf{u}^*$. As the cases discussed in \ref{TV_signal_denoising} and \ref{MNIST_database}, we test the necessary condition described by the inequalities \eqref{lambda_general-example1_2} in Corollary \ref{general-example1} and the inequalities \eqref{lambda_general-example2_2} in Corollary \ref{general-example2} for the problem \eqref{SVM_hinge_loss} and \eqref{SVM_epsilon_insensitive_loss}, respectively, having a solution $\mathbf{u}^{*}$ with sparsity of a prescribed level under the transform $\mathbf{B}$. Specifically, for a given parameter $\lambda$, by solving the minimization problems \eqref{SVM_hinge_loss} and \eqref{SVM_epsilon_insensitive_loss}, we obtain the corresponding numerical solutions $\mathbf{u}^{\infty}$.
We then verify the pair of the chosen $\lambda$ value and the corresponding solution  $\mathbf{u}^{\infty}$ satisfy inequalities \eqref{lambda_general-example1_2}, and the sparsity level of $\mathbf{B}\mathbf{u}^{\infty}$. We also test the similar relation for the inequalities \eqref{lambda_general-example2_2} in Corollary \ref{general-example2}

In the first experiment, we consider the $\ell_1$ SVM classification model \eqref{SVM_hinge_loss} with the hinge loss function. The dataset we use is MNIST database with digits ``7" and ``9'' as mentioned in subsection \ref{MNIST_database}. The experiment uses $512$ training samples and $2037$ testing samples. The kernel involves in model \eqref{SVM_hinge_loss} is chosen as the Gaussian Kernel defined by \eqref{Gaussian_Kernel} with $\mu=4$ and $n=512$. We choose seven different values of parameter $\lambda$. The minimization problem 
\eqref{SVM_hinge_loss} with each value of $\lambda$ is solved by the FPPA with $\bm{\varphi}:=\lambda\|\cdot\|_1\circ\mathbf{B}$, $\bm{\omega}:=\bm{\phi}$ and $\mathbf{C}:=\mathbf{Y}\mathbf{K}'$. 

\begin{table}[ht]
\caption{$\ell_1$ SVM classification model with hinge loss (512 training dataset) by millions of iterations}
\vspace{0.2cm}
\begin{indented}
\lineup
\item[]\begin{tabular}{@{}cc|c|c|c|c|c|c|c}  
\hline\hline      
&$\lambda$              
&$0.1$ &$0.2$&$1$&$2$ &$4$&$10$ &$27.9851$\\ \hline 
&$\gamma$
&$0.0543$ &$0.1458$&$0.8785$ &$1.9399$ 
&$3.7844 $  &$ 9.6159$  &$27.9850$ \\ \hline 
&$\mathrm{SL}$  
& $151$ & $151$ & $111$ & $56$ & $37$ & $15$ &$0$\\  \hline 
&$\mathrm{TRA}$          
&$100\%$ &$100\%$&$97.27\%$  &$95.51\%$
&$92.97\%$ &$81.45\%$&$50.00\%$\\ \hline
&$\mathrm{TEA}$        
&$96.71\%$ & $96.66\%$ & $95.68\%$&$93.47\%$ &$91.36\%$ & $80.85\%$ & $50.47\%$\\
\hline \hline
\end{tabular}
\label{table_sparsity_41}
\end{indented}
\end{table}

Associated with the numerical approximation $\mathbf{u}^{\infty}$, we identify $l$ distinct integers $k_i\in\mathbb{N}_{n}$ so that $\mathbf{B}\mathbf{u}^{\infty}:=\sum_{i\in\mathbb{N}_l}z^{\infty}_{k_i}\mathbf{e}_{k_i}$, $z^{\infty}_{k_i}\in\mathbb{R}\setminus{\{0\}}$, $l\in\mathbb{N}_l$. That is, $\mathbf{B}\mathbf{u}^{\infty}$ has $l$ nonzero components. We compute the numbers 
$\gamma:=\mathrm{max}\left\{
\mathrm{min}\left\{
|(\mathbf{Y}\mathbf{K}_j)^{\top}\mathbf{c}|:\mathbf{c}\in\partial\bm{\phi}(\mathbf{Y}\mathbf{K}'\mathbf{u}^{\infty})\right\}: 
\ j\in \mathbb{N}_n\setminus  \{k_i:i\in\mathbb{N}_l\}\right\}$ and verify indeed that $\lambda\geq\gamma$. The selected value of parameter $\lambda$, the values of $\gamma$, the sparsity levels $\mathrm{SL}$ of $\mathbf{B}\mathbf{u}^\infty$, the $\mathrm{TRA}$ values and the  $\mathrm{TEA}$ values are reported in Table  \ref{table_sparsity_41}. These numerical results confirm the inequality \eqref{lambda_general-example1_2} in Corollary \ref{general-example1}.
Note that if $\lambda$ is sufficiently large, the vector $\mathbf{B}\mathbf{u}^*$ has the sparsity of level $0$. The value of the parameter $\lambda$ listed in the last column of Table \ref{table_sparsity_41} such that the solution has most sparsity under the transform $\mathbf{B}$. Testing accuracy being worse is caused by small number of training samples which can not capture significant features.

We repeat the first experiment by using $8,141$ training samples and $2,037$ testing samples. The kernel involves in model \eqref{SVM_hinge_loss} is  chosen as the Gaussian Kernel defined by \eqref{Gaussian_Kernel} with $\mu=4$ and $n=8,141$.
The selected value of parameter $\lambda$, the sparsity levels $\mathrm{SL}$ of $\mathbf{B}\mathbf{u}^\infty$, the $\mathrm{TRA}$ values and the  $\mathrm{TEA}$ values are reported in Table \ref{table_sparsity_4}. Observing form Table \ref{table_sparsity_4}, we get that the sparsity level of $\mathbf{B}\mathbf{u}^{\infty}$ is smaller as the value of parameter $\lambda$ increases, the corresponding $\mathrm{TRA}$ and $\mathrm{TEA}$ values become lower. 

\begin{table}[ht]
\caption{$\ell_1$ SVM classification model with hinge loss (8,141 training dataset) by 30,000 iterations}
\vspace{0.2cm}
\begin{indented}
\lineup
\item[]\begin{tabular}{@{}cc|c|c|c|c|c|c|c}  
\hline\hline      
&$\lambda$              
&$0.1$ &$0.2$ &$1$ &$2$ &$4$ &$10$ &$435.0694$  \\ \hline 
&$\mathrm{SL}$  
&$552$ &$481$ &$167$ &$92$ &$56$ &$34$ &$0$ \\  \hline 
&$\mathrm{TRA}$              
&$99.99\%$ &$99.99\%$  &$99.08\%$
&$98.17\%$ &$97.53\%$ &$96.30\%$&$50.67\%$ \\ \hline        
&$\mathrm{TEA}$         
&$98.72\%$ &$98.77\%$ &$98.38\%$
&$98.09\%$ &$97.45\%$ &$96.27\%$ &$50.47\%$\\        
\hline \hline    
\end{tabular}
\label{table_sparsity_4}
\end{indented}
\end{table}


In the second experiment, we consider the $\ell_1$ SVM regression model \eqref{SVM_epsilon_insensitive_loss} with the $\epsilon$-insensitive loss function. The dataset we use is ``Mg'' as mentioned in subsection \ref{MNIST_database}. The kernel involves in model \eqref{SVM_epsilon_insensitive_loss} is chosen as the Gaussian Kernel defined by \eqref{Gaussian_Kernel} with $\mu=1.5$ and $n=1,000$.
We take  $\epsilon=10^{-4}$ involved in $\epsilon$-insensitive loss for $\ell_1$ SVM regression model \eqref{SVM_epsilon_insensitive_loss}. We choose seven different values of parameter $\lambda$. The minimization problem \eqref{SVM_epsilon_insensitive_loss} with each value of $\lambda$ is solved by the FPPA with $\bm{\varphi}:=\lambda\|\cdot\|_1\circ\mathbf{B}$, $\bm{\omega}:=\bm{\phi}_{\mathbf{y},\epsilon}$ and $\mathbf{C}:=\mathbf{K}'$. 

\begin{table}[ht]
\caption{Numerical results of $\ell_{1}$ SVM regression model with $\epsilon$-insensitive loss by millions of iterations}
\vspace{0.2cm}
\begin{indented}
\lineup
\item[]\begin{tabular}{@{}cc|c|c|c|c|c|c|c}  
\hline\hline      
&$\lambda$              
&$0.01$ &$0.4$  &$1$ &$2$ &$4$ &$18.00$ &$135.8091$ \\ \hline        
 &$\gamma$
 &$0.005$ &$0.1045$ &$0.8980$ &$1.7528$ &$3.2839$ &$17.8510$ &$135.8091$ \\ \hline 
&$\mathrm{SL}$  
&$305$ &$33$ &$19$ &$11$ &$7$ &$5$ &$0$ \\ \hline 
&$\mathrm{TRM}$ 
&$0.0145$ &$0.0165$ &$0.0177$ &$0.0185$ &$0.0200$ &$0.0207$ &$0.0530$
\\ \hline        
&$\mathrm{TEM}$              
 &$0.0157$ &$0.0162$ &$0.0170$ &$0.0177$ &$0.0193$ &$0.0202$
 &$0.0530$ \\        
\hline \hline  
\end{tabular}
\label{table_sparsity_7}
\end{indented}
\end{table}

Associated with the numerical approximation $\mathbf{u}^{\infty}$, we identify $l$ distinct integers $k_i\in\mathbb{N}_{n}$ so that $\mathbf{B}\mathbf{u}^{\infty}:=\sum_{i\in\mathbb{N}_l}z^{\infty}_{k_i}\mathbf{e}_{k_i}$, $z^{\infty}_{k_i}\in\mathbb{R}\setminus{\{0\}}$, $l\in\mathbb{N}_l$. That is, $\mathbf{B}\mathbf{u}^{\infty}$ has $l$ nonzero components. We compute the numbers 
$\gamma:=\mathrm{max}\left\{\mathrm{min}\left\{|(\mathbf{K}_j)^{\top}\mathbf{c}|:\mathbf{c}\in\partial\bm{\phi}_{\mathbf{y},\epsilon}(\mathbf{K}'\mathbf{u}^{\infty})\right\}: \ j\in \mathbb{N}_n\setminus  \{k_i:i\in\mathbb{N}_l\}\right\}.$ The selected values of parameter $\lambda$, the values of $\gamma$, the sparsity levels $\mathrm{SL}$ of $\mathbf{B}\mathbf{u}^\infty$, the $\mathrm{TRM}$ values and the  $\mathrm{TEM}$ values are reported in Table  \ref{table_sparsity_7}. Similar to the first experiment that the numerical results reported in Table \ref{table_sparsity_41} coincides with inequality \eqref{lambda_general-example1_2} in Corollary \ref{general-example1}, these numerical results reported in Table  \ref{table_sparsity_7} confirm inequalities \eqref{lambda_general-example2_2} in Corollary \ref{general-example2}. 

\subsection{Signal denoising by using DWT and DCT}
In this subsection, we test the result in Theorem \ref{choice_error} by considering the signal denoising model \eqref{lasso_noise_data} with matrix $\mathbf{A}\in\mathbb{R}^{n\times n}$ being chosen as an orthogonal discrete wavelet transform and the discrete cosine transform separably. Note that Theorem \ref{choice_error} applied to these cases leads to Corollary \ref{choice_error_orthogonal}. We will confirm that the parameter strategies proposed in Corollary \ref{choice_error_orthogonal} can balance sparsity of the regularized solution and its approximate accuracy. For each chosen value of parameter $\lambda$, the resulting Lasso regularized model \eqref{lasso_noise_data} is solved by the FPPA with $\bm{\varphi}:=\lambda\|\cdot\|_1$, $\bm{\omega}:=\frac{1}{2}\|\cdot-\mathbf{x}^{\delta}\|_2^2$ and $\mathbf{C}:=\mathbf{A}$. 

In this experiment, we again consider recovering the Doppler signal function $f$ defined by \eqref{Doppler_signal}. As in subsection \ref{Group_lasso_signal}, we take $n:=4096$ and generate sample points $t_j, j\in\mathbb{N}_{n}$ and the original signal $\mathbf{f}$. The noisy signal is modeled as $\mathbf{x}^\delta:= \mathbf{f}+\bm{\eta}$, where $\bm{\eta}$ is the additive white Gaussian noise and the noise level $\delta:=\|\bm{\eta}\|_2$.


\begin{table}[ht]
\caption{\label{wavelet_sparsity_1m} Numerical results for the signal denoising model (with $\delta= 8.4682$) using the DB6 wavelet transform with 1,000 iterations}
\vspace{0.2cm}
\begin{indented}
\lineup
\item[]\begin{tabular}{@{}cc|c|c|c|c|c|c|c}
\hline\hline
&$\lambda$  &$0.1381$ &$0.1872$  &$0.2251$ &$0.3343$ &$0.6606$ &$3.9509$ &$7.1753$\\ \hline 
&$\mathrm{SL}$  &$1257$ &$718$  &$420$ &$102$ 
&$31$ &$7$  &$0$ \\   \hline 
&$\mathrm{MSE}$  &$0.0029$ &$0.0018$  &$0.0015$ &$0.0019$ 
&$0.0048$ &$0.0497$  &$0.0858$ \\   \hline \hline 
\end{tabular}
\end{indented}
\end{table}

In the first case, we choose the discrete wavelet transform matirx $\mathbf{A}$ as the Daubechies wavelet with $\mathrm{N}=6$ and $\mathrm{L}=4$. 
We test both Items (i) and (ii) of Theorem \ref{choice_error}. We first test 
Item (i). We choose the parameter $\lambda$ according to Statement (i) of 
Corollary \ref{choice_error_orthogonal}. Specifically, we define the sequence 
$a_j^{\delta}, j\in\mathbb{N}_{n}$, by \eqref{sequence_a_j} with $d=n$, 
rearrange the sequence in a nondecreasing order: $a_{k_1}^{\delta}\leq 
a_{k_2}^{\delta}\leq \cdots \leq a_{k_{n}}^{\delta}$ with distinct 
$k_i\in\mathbb{N}_{n}, i\in\mathbb{N}_{n}$, and pick 
$\lambda=a_{k_{n-j}}^\delta$ for $j=1257$, $718$, $420$, $102$, $31$, $7$, $0$ 
with indices of $j$ selected randomly.
We report in Table \ref{wavelet_sparsity_1m} the selected values of parameter $\lambda$, the sparsity levels $\mathrm{SL}$ of the numerical approximation  $\mathbf{u}^{\infty}$ and the MSE values of the denoised signals $\mathbf{u}^{\infty}$ with the noise level $\delta=8.4682$, where the MSE value is defined as subsection \ref{TV_signal_denoising}. These numerical results confirm the theoretical result stated in the Statement (i) of Corollary \ref{choice_error_orthogonal}, which guarantees the denoised signals have sparsity of levels $\leq l$ with $l=1257$, $718$, $420$, $102$, $31$, $7$, $0$, corresponding to the chosen values of $\lambda$. Moreover, the $\mathrm{MSE}$ values of the denoised signals depending on $\lambda$ exhibit a pattern like $\lambda+1/\lambda$, which is essentially described in Statement (i) Corollary \ref{choice_error_orthogonal} as an upper bound of the approximation error.

\begin{table}[ht] 
\caption{\label{estimate_error_DWT}
Numerical results for the signal denoising model (with $\lambda:=1.2\delta$) using the DB6 wavelet transform with 1,000 iterations}
\vspace{0.2cm}
\begin{indented}
\lineup
\item[] 
\begin{tabular}{@{}cc|c|c|c|c|c|c|c}
\hline\hline
&$\delta$  &1.9e-9 &6.0e-7  &1.9e-4 &1.9e-2 &6.0e-1 &1.9 &6.0\\ \hline 
&$\mathrm{SL}$  &$1049$ &$583$  &$274$ &$119$ 
&$29$ &$15$  &$0$ \\   \hline 
&$\mathrm{MSE}$ &1.4e-18 &7.8e-14  &3.8e-9 &1.7e-5 
&5.5e-3 &2.6e-2  &8.6e-2 \\ 
\hline 
&$\mathrm{MSE}/\delta$ &7.4e-10 &1.3e-7  &2.0e-5 &8.9e-4
&9.2e-3 &1.4e-2  &1.4e-2 \\
\hline \hline 
\end{tabular}
\end{indented}
\end{table}

We now test Statement (ii) of Corollary \ref{choice_error_orthogonal}. According to Statement (ii) of Corollary \ref{choice_error_orthogonal}, we choose $\lambda:=C\delta$ with $C=1.2$ and seven different noise levels shown in the first row of Table  \ref{estimate_error_DWT}. We report numerical results of this case in Table \ref{estimate_error_DWT} where the noise levels $\delta$, the sparsity levels $\mathrm{SL}$ of the numerical approximation $\mathbf{u}^{\infty}$, the $\mathrm{MSE}$ values of the denoised signals $\mathbf{u}^{\infty}$ and $\mathrm{MSE}/\delta$ are listed.
These numerical results verify the theoretical finding presented in Statement (ii) of Corollary \ref{choice_error_orthogonal} which guarantees the regularized solution has sparsity of a prescribed level and a bound $C'\delta$ for its approximation error. We comment on the numerical values of $\mathrm{MSE}/\delta$ listed in the last row of Table \ref{estimate_error_DWT}. Even though from Corollary \ref{choice_error_orthogonal}, $C'$ should be a constant independent of $\delta$, the numerical results for this particular example show that $C'$ decreases as $\delta$ become smaller. Perhaps this may be an indication of a superconvergence phenomenon for this example or it hints that the estimate in Corollary \ref{choice_error_orthogonal} may be improved for certain special cases. We plan to further understand this phenomenon.

For the parameter choice strategy $\lambda:=C\delta$ proposed in Statement (ii) of Corollary \ref{choice_error_orthogonal}, we also consider two values of $\delta$ ($\delta=1.9\times 10^{-9}, 1.9\times 10^{-4}$), and four values of $C$ ($C=0.12, 1.2, 12, 120$). Numerical results for this case can be found in Table \ref{estimate_error_DWT2} where the noise level $\delta$, the constants $C$, the sparsity levels $\mathrm{SL}$ of $\mathbf{u}^{\infty}$ and the $\mathrm{MSE}$ values of the denoised signals $\mathbf{u}^{\infty}$ are reported. They are consistent with the estimate shown in Statement (ii) of Corollary \ref{choice_error_orthogonal}.



\begin{table}[ht]
\caption{\label{estimate_error_DWT2} Numerical results for the signal denoising model (with $\lambda:=C\delta$) using the DB6 wavelet transform with 1,000 iterations}
\vspace*{0.2cm}
\begin{indented}
\lineup
\item[]\begin{tabular}{@{}c|c|c|c|c|c|c|c|cc}
\hline\hline
$\delta$
&\multicolumn{4}{c|}{1.9e-9} &\multicolumn{4}{c}{1.9e-4}
\\
\hline
C &0.12 &1.2 &12 &120 &0.12 &1.2  &12  &120 \\
\hline
$\mathrm{SL}$  &1300 &1049 &833 &662 &385 &274 &187 &118\\
\hline
$\mathrm{MSE}$ &1.8e-20 &1.4e-18 &1.1e-16 &8.8e-15 &5.2e-11 &3.8e-9 &2.6e-7 &1.7e-5\\
\hline\hline
\end{tabular}
\end{indented}
\end{table}

The second case repeats the first case with $\mathbf{A}$ being replaced by the discrete cosine transform. 
Numerical results for this case are reported in Tables \ref{DCT_sparsity_1m}, \ref{estimate_error_DCT} and \ref{estimate_error_DCT2}, which again confirm Statements (i) and (ii) of Corollary \ref{choice_error_orthogonal}. Note that for this case the MSE/$\delta$ values shown in the last row of Table \ref{estimate_error_DCT} behave more like a constant.


\begin{table}[ht]
\caption{\label{DCT_sparsity_1m}
Numerical results for the signal denoising model (with $\delta = 8.4682$) using the discrete cosine transform with 1,000 iterations}
\vspace{0.2cm}
\begin{indented}
\lineup
\item[]\begin{tabular}{@{}cc|c|c|c|c|c|c|c}
\hline\hline
&$\lambda$  &$0.1435$ &$0.1960$ &$0.2426$ &$0.3820$ &$1.0869$ &$4.0588$ &$8.2623$\\ \hline 
&$\mathrm{SL}$  &$1257$ &$718$  &$420$ &$102$ 
&$31$ &$7$  &$0$ \\   \hline 
&$\mathrm{MSE}$  &$0.0040$ &$0.0033$  &$0.0034$ &$0.0049$ 
&$0.0159$ &$0.0593$  &$0.0858$ \\ \hline \hline 
\end{tabular}
\end{indented}
\end{table}



\begin{table}[ht] 
\caption{\label{estimate_error_DCT}
Numerical results for the signal denoising model (with $\lambda := 1.4\delta$) using the discrete cosine transform with 1,000 iterations}
\vspace{0.2cm}
\begin{indented}
\lineup
\item[] \begin{tabular}{@{}cc|c|c|c|c|c|c|c}
\hline\hline
&$\delta$  &6.0e-4 &1.9e-2  &6.0e-2 &1.9e-1 &6.0e-1 &1.9 &6.0\\ \hline 
&$\mathrm{SL}$  &$1172$  &$558$  &$292$ &$115$ 
&$27$ &$11$  &$0$ \\   \hline 
&$\mathrm{MSE}$ &2.3e-7 &1.1e-4  &6.9e-4 &3.2e-3 
&1.8e-2 &4.1e-2  &8.6e-2 \\   \hline 
&$\mathrm{MSE}/\delta$ &3.8e-4 &5.8e-3  &1.2e-2 &1.7e-2 
&3.0e-2 &2.2e-2  &1.4e-2 \\      \hline \hline 
\end{tabular}
\end{indented}
\end{table}



\begin{table}[ht]
\caption{\label{estimate_error_DCT2}
Numerical results for the signal denoising model (with $\lambda := C\delta$) using the discrete cosine transform with 1,000 iterations}
\vspace*{0.2cm}
\begin{indented}
\lineup
\item[]\begin{tabular}{@{}cc|c|c|c|c|c|c|c|cc}
\hline\hline
&$\delta$ &\multicolumn{4}{c|}{6.0e-4} &\multicolumn{4}{c}{1.9e-1}
\\
\hline
&C &0.14 &1.4 &14 &140 &0.14 &1.4  &14  &140 \\
\hline
&$\mathrm{SL}$  &2178 &1172 &725 &291 &557 &115 &11 &0\\
\hline
&$\mathrm{MSE}$ &4.7e-9 &2.3e-7 &1.4e-5 &6.9e-4 &1.1e-4 &3.2e-3 &4.1e-2 &8.6e-2\\
\hline\hline
\end{tabular}
\end{indented}
\end{table}


\section{Conclusion}

We have studied choice strategies of the regularization parameter for the regularization problem with a $\ell_1$ norm regularization. The strategies are proposed to balance sparsity of a regularized solution and its approximation accuracy compared to the corresponding minimal solution. The ingredient used in developing the strategies is the connection of the choice of the parameter with the sparsity level of the regularized solution and 
with the approximation error. Much effort of this paper is given to understanding the connection between the choice of the parameter and the sparsity level of the regularized solution. We have also demonstrated how such understanding is combined with an error bound of the regularized solution to obtain a strategy for choices of the parameter to balance its sparsity and approximation accuracy. We have conducted substantial numerical experiments to test the proposed strategies. Numerical results of various application models confirm our theoretical estimates. More extensive applications of the proposed strategies will be our future research projects.

\ack
R. Wang is supported in part by the National Key Research and Development Program of China (grants no. 2020YFA0714100 and 2020YFA0713600) and by the Natural Science Foundation of China under grant 12171202; Y. Xu is supported in part by the US National Science Foundation under grant DMS-1912958 and by the US National Institutes of Health under grant R21CA263876.

\section*{References}

\end{document}